\documentclass[amsart,fleqn]{amsart}

\usepackage{latexsym}
\usepackage{amsmath}
\usepackage{amssymb}
\usepackage{epsfig}
\usepackage{amsfonts}

\begin{document}
\newtheorem{definition}{Definition}[section]
\newtheorem{definition and examples}[definition]{Definition and examples}
\newtheorem{theorem}[definition]{Theorem}
\newtheorem{lemma}[definition]{Lemma}
\newtheorem{proposition}[definition]{Proposition}
\newtheorem{examples}[definition]{Examples}
\newtheorem{corollary}[definition]{Corollary}
\newtheorem{comments}[definition]{Comments}
\def\square{\Box}
\newtheorem{remark}[definition]{Remark}
\newtheorem{remarks}[definition]{Remarks}
\newtheorem{exercise}[definition]{Exercise}
\newtheorem{example}[definition]{Example}
\newtheorem{observation}[definition]{Observation}
\newtheorem{observations}[definition]{Observations}
\newtheorem{algorithm}[definition]{Algorithm}
\newtheorem{criterion}[definition]{Criterion}
\newtheorem{algcrit}[definition]{Algorithm and criterion}

\newenvironment{prf}[1]{\trivlist
\item[\hskip \labelsep{\it
#1.\hspace*{.3em}}]}{~\hspace{\fill}~$\square$\endtrivlist}

\title[Moduli for differential equations and Painlev\'e equations]{Moduli  spaces for linear differential equations and the Painlev\'e equations}

\dedicatory{Dedicated to Professor Bernard Malgrange on the occasion of his 80th birthday}
\author{Marius van der Put and Masa-Hiko Saito}
\thanks{Partly supported by Grant-in Aid
for Scientific Research (S-19104002) the Ministry of
Education, Science and Culture, Japan }
\address{Institute of Mathematics and Computing Science, University of Groningen, P.O. Box 407, 9700 AK Groningen, The Netherlands}
\address{Department of Mathematics, Graduate School of Science,  
Kobe University, Kobe, Rokko, 657-8501, Japan}
\email{mvdput@math.rug.nl}
\email{mhsaito@math.kobe-u.ac.jp}
\keywords{Moduli space for linear connections, Irregular singularities, 
Stokes matrices, Monodromy spaces, Isomonodromic deformations,  Painlev\'e equations}
\subjclass{14D20,14D25,34M55,58F05}
\maketitle

\section*{Introduction}
The theme of this paper is a systematic construction of the ten isomonodromic
 families of  connections of rank two on $\mathbb{P}^1$ inducing 
Painlev\'e equations. They are obtained by considering the complex  analytic 
Riemann--Hilbert morphism $RH:\mathcal{M}
\rightarrow \mathcal{R}$ from a moduli space $\mathcal{M}$  of connections to 
a categorical moduli space of analytic data (i.e., ordinary monodromy, Stokes 
matrices and links) $\mathcal{R}$, here called the 
{\it monodromy space}. The fibres of $RH$ are the isomonodromic families. One 
requires that an isomonodromic family  has dimension 1, since it is then
 (locally) parametrized by one variable $t$ and some combination $q(t)$ of the 
entries of the connection is a potential solution of some second order 
Painlev\'e equation. This condition leads to the ten families. Our method
extends the work of Jimbo, Miwa and Ueno \cite{JMU,JM}, since we allow
all possible irregular singularities including ramification and resonance.\\

There is a natural  morphism 
$\mathcal{R}\rightarrow  \mathcal{P}$, where $\mathcal{P}$ is a parameter 
space build from traces of matrices. For each of the ten families,  
the morphism $\mathcal{R}\rightarrow \mathcal{P}$ turns out to be a family of 
affine cubic surfaces with three lines at infinity.  We will give explicit equations
 of $\mathcal{R}$ for these ten families in \S 2 and \S 3. The equation for Painlev\'e VI is classical \cite{FK65, Iw2}, and the equations for the other nine families seem to be new.

Since many aspects of  the well known family with four regular singularities 
leading to Painlev\'e VI, has been studied in great detail (\cite{Boalch, IIS1, IIS2, IISA, Iw2}), our 
emphasis will be on families with irregular singularities. Of the nine 
families with irregular singularities, six are again classical 
\cite{JM,FN}. The three remaining ones were also recently discovered in 
\cite{OO,OKSO}. The corresponding Painlev\'e equations appear already in
\cite{Sakai} from the viewpoint of the Okamoto--Painlev\'e pairs.   \\

 The moduli spaces of connections $\mathcal{M}$ are strongly related to the 
 Okamoto--Painlev\'e pairs $(S,Y)$ of non fibre type \cite{Sakai,STT02}. 
 The latter determine uniquely each type of Painlev\'e equation \cite{STT02}.
 We will give a brief description of this relation.\\
  
  The surface $S$ is the blow up of  nine points (allowing for infinitely near
 points) in $\mathbb{P}^2$ 
(or equivalently eight points in the Hirzebruch surface  $\Sigma _2$) 
which lie on an  effective anti-canonical divisor  of $\mathbb{P}^2$ or $\Sigma _2$. Let $Y$ be the 
unique effective anti-canonical divisor of $S$. The Okamoto--Painlev\'e 
condition on $Y$ implies that $Y$ has the same configuration as a degenerate 
elliptic curve in the  classification  by Kodaira--N\'eron \cite{O1, Sakai, STT02}.

The configuration of the irreducible components of $Y$ for the 
Okamoto--Painlev\'e pairs are given by the eight extended
   Dynkin diagrams
 \[ \tilde{D}_4,\tilde{D}_5,\tilde{D}_6,\tilde{D}_7 ,\tilde{D}_8,\ \tilde{E}_6,
 \ \tilde{E}_7,\ \tilde{E}_8. \]
 Each Dynkin diagram gives rise to a (uni)versal global family provided with a unique vector 
 field which induces a Painlev\'e equation \cite{STT02}.
 
 One conjectures that a relative compactification of each of the ten families 
of connections $\pi: \mathcal{M}\rightarrow T\times \Lambda$ with parameter
space $T\times \Lambda$, is isomorphic to one of the above global (uni)versal
families. As a consequence of this conjecture, the fibres of $\pi$ are the 
complement $S \setminus Y$  for a certain  Okamoto--Painlev\'e pair $(S, Y)$  
of the given type. The conjecture has been proven for Okamoto--Painlev\'e pairs
 of type $\tilde{D}_4$, which corresponds to Painlev\'e VI. 
(For the construction of the moduli spaces of linear connections with only 
regular singularities and the Riemann-Hilbert correspondence for these, see 
\cite{IIS1, IIS2, Ina06}).

 There is an explicit analytic morphism  $\Lambda \rightarrow \mathcal{P}$,
 given by exponentials, which is compatible with the Riemann--Hilbert morphism 
 $RH:\mathcal{M}\rightarrow \mathcal{R}$.  
 The monodromy space $\mathcal{R}\rightarrow \mathcal{P}$ can have, as fibre,
  a singular (affine) cubic surface $\mathcal{R}_p$.  As is conjectured and 
proved for the PVI case, the Riemann--Hilbert morphism yields 
  an analytic resolution $(S\setminus Y)\rightarrow \mathcal{R}_p$.
   The singular points of type
 $A_1,A_2, A_3, D_4$ on the cubic surface yield 1, 2, 3 and 4 exceptional 
curves on $S\setminus Y$ which are called Riccati curves.
 The latter are related to Riccati solutions of the corresponding Painlev\'e 
equation. Since the Riccati curves on the Okamoto--Painlev\'e pairs are known 
(\cite{STe02}), one can now link each of the ten monodromy spaces 
$\mathcal{R}$ to an  Okamoto--Painlev\'e pair and an extended Dynkin diagram 
(see Table \ref{tab:dynkin}).   We remark, as done in \cite{OO}, 
 that for the  case $\tilde{D}_6$ there are two types of isomonodromic families 
corresponding  to ${\rm PV}_{\rm deg}$ and PIII($D_6$). The same 
holds for $\tilde{E}_7$. 

In Section 4, a Zariski open set of the moduli space $\mathcal{M}$ of 
connections is described for each of the ten families. The 
corresponding isomonodromic equation produces an explicit Painlev\'e equation,
confirming the statements of Table 1.

     The contents of this paper is the following. The first section deals with
 the formal and analytic data attached to a differential module $M$ over 
$\mathbb{C}(z)$.  The   connections on  $\mathbb{P}^1$ inducing given formal 
and analytic data  are  studied.
   A weak and a strong form of the Riemann-Hilbert problem is treated. This
 result is also obtained by \cite{BMM} in a slightly different setting. 
   
     In Section 2, `good' families of connections on $\mathbb{P}^1$ are
 described and studied. The monodromy space $\mathcal{R}$ is defined as a 
categorical  quotient of the analytic data. 
  
 Then the  ten families where the fibres 
 of $RH:\mathcal{M}\rightarrow \mathcal{R}$ have     dimension 1 are computed.
 The third Section contains the computation of the
     ten monodromy spaces $\mathcal{R}\rightarrow \mathcal{P}$ and the
 singularities  of the fibres.
    
    A theory of apparent singularities $q$ is developed in Section 4. This is 
essential for the computation of the second order equation $q''=R(q,q',t)$
 (where $R$ is a rational 
    function of $q',q,t$) of the Painlev\'e type and of a corresponding 
symplectic structure with canonical coordinates $p,q$ and a Hamiltonian 
equation.  We obtain explicit Hamiltonian systems and explicit Painlev\'e equations for the
nine families (see Subsections \ref{ss:p5-d5}--\ref{ss:p1-e8}) which are natural from the view point  of the Okamoto--Painlev\'e pairs.  The explicit forms of  equations depend on the choice of a cyclic vector, the choice of the parameter $t$ and choices for the constants in the monodromy space. Though we will not tune up these data such that our explicit forms coincide with the classical Painlev\'e equations as in \cite{Gambier, P, JM}, one can transform one to the other by some birational transformation of coordinates. 
 Most of these computations in Section 4 were made using {\it Mathematica}.

\section{Singularities of a differential module}

\subsection{Summary}
Let $M$ be a differential module over $K=\mathbb{C}(z)$. The formal data (generalized local exponents, formal monodromy), and the analytic data (monodromy, Stokes matrices, links) of $M$ are described.  The weak form of the Riemann-Hilbert problem for arbitrary singularities has the positive answer:\\ 
{\bf Theorem 1.7} {\it For given formal and analytic data, there exists a unique 
{\rm (}up to isomorphism{\rm )} differential module $M$ inducing these data.}\\ 

A strong form of the Riemann--Hilbert problem is:\\
{\bf Theorem 1.11 }{\it Suppose that $M$ is irreducible and has at least one
 {\rm(}regular or irregular{\rm )}
singular point which is unramified. Then there is a connection $(\mathcal{V},\nabla )$ on $\mathbb{P}^1$ representing $M$,  such that  $\mathcal{V}$ is free (i.e., a direct sum of copies of $\mathcal{O}_{\mathbb{P}^1}$) and the poles of the connection $\nabla$ have the minimal order derived from the Katz invariant.}\\
 
 Results concerning invariant lattices are developed for the proof of Theorem 1.11.  
 That the strong Riemann--Hilbert problem has a negative answer if all the singularities of $M$ are ramified, is shown by two families of examples related to Painlev\'e equations. 

Bolibruch's work \cite{AB} on the strong form of the  Riemann--Hilbert problem is extended in the 
paper \cite{BMM} to the case of irregular singularities. Our Theorems 1.7 and 
1.11 clarify and supplement \cite{BMM}, by introducing links.


 For the convenience  of the  reader, the useful  compact way to describe the formal and the analytic  singularities of differential modules (see \cite{PS} for more details) is explained in the next subsections.  Explicit examples are given which will be used in the calculations for the monodromy spaces and the Painlev\'e equations.

\subsection{The formal classification}
This is the classification of differential modules $M=(M,\delta )$ over the differential field of the formal Laurent series $\mathbb{C}((t))$ (here $t$ is the local parameter) , due to M.~Hukuhara\cite{Hu} and  
H.~Turrittin \cite{Tu}.  For notational convenience we will use the derivation
$t\frac{d}{dt}$ on $\mathbb{C}((t))$. The $\mathbb{C}$-linear map $\delta :M\rightarrow M$
has, by definition, the property $\delta (fm)=t\frac{df}{dt}\cdot m+f\cdot \delta (m)$ for
 $f\in \mathbb{C}((t)),\ m\in M$.\\
 The module $M$ is called {\it regular singular} 
 (this includes regular) if there is an invariant lattice $\Lambda \subset M$, i.e., $\Lambda\subset M$ is a free $\mathbb{C}[[t]]$-submodule containing a basis of $M$ such that 
 $\delta (\Lambda )\subset \Lambda$. 
  A regular singular $M$ has a basis $e_1,\dots ,e_d$ such that  the vector space $W:=\oplus _{i=1}^d\mathbb{C}e_i$ is invariant under $\delta$ and such that the distinct eigenvalues $\lambda _1,\dots ,\lambda _s$
(with $1\leq s\leq d$) of $\delta$ acting on $W$ satisfy $\lambda _i-\lambda _j\not \in \mathbb{Z}$ for $i\neq j$. Using this basis the operator $\delta$ on $M$ obtains the form
$t\frac{d}{dt}+A$, where $A$ is the matrix of $\delta$ operating on $W$. The $\lambda _i$ are called the {\it local exponents}. These are only unique up to integers. The {\it (formal) monodromy} matrix is (up to conjugation) $e^{2\pi iA}$.

 Clearly $\Lambda :=\oplus _{i=1}^d\mathbb{C}[[t]]e_i$ is an invariant lattice. 
The {\it non resonant case} is defined by $s=d$, i.e., the matrix $A$ is
diagonalizable and its eigenvalues $\lambda _1,\dots ,\lambda _d$ satisfy 
$\lambda _i-\lambda _j\not \in \mathbb{Z}$ for $i\neq j$. In the non resonant case the collection of all invariant lattices is $\{\oplus _{i=1}^d\mathbb{C}[[t]]t^{n_i}e_i\ |\ n_1,\dots ,n_d\in \mathbb{Z}\}$ and the formal monodromy has $d$ distinct eigenvalues.

The {\it solutions} of a regular singular module $M$, say, represented in matrix form 
$t\frac{d}{dt}+A$, are vectors $v$ with $(t\frac{d}{dt}+A)v=0$. One needs the following 
differential ring extension $Univ_{rs}:=\mathbb{C}((t))[\{t^a\}_{a\in \mathbb{C}} ,\ell ]$ of $\mathbb{C}((t))$ to obtain the vector space $V$ of all solutions. The {\it symbols} $t^a, \ell$
are defined by the rules $t^a\cdot t^b=t^{a+b}$, $t^1$ coincides with $t\in \mathbb{C}((t))$.
Further $t\frac{d}{dt}t^a=at^a,\ t\frac{d}{dt}\ell =1$. The intuitive meaning of these symbols 
is rather clear: $t^a$ stands for $e^{a\log t}$ and $\ell$ for $\log t$. Because these 
functions are multivalued, they are replaced by symbols.

Then $V$ consists of the vectors $v$ with coordinates in $Univ_{rs}$ satisfying 
$(t\frac{d}{dt}+A)v=0$. In other words,
$V=\{ v\in Univ_{rs}\otimes _{\mathbb{C}((t))}M\ |\ \delta (v)=0\}$. The ring $Univ_{rs}$ 
has a $\mathbb{C}((t))$-linear differential automorphism $\gamma$, defined by
$\gamma t^a=e^{2\pi ia}t^a,\ \gamma \ell =\ell +2\pi i$. Now $\gamma$ induces on
automorphism $\gamma \otimes id$ on $Univ_{rs}\otimes M$, commuting with $\delta$.
Then $V$ is invariant under $\gamma$ and the restriction of $\gamma$ to $V$, again written
as $\gamma$ or $\gamma _V$, is the formal monodromy. From the pair $(V,\gamma _V)$
one recovers the differential module $(M,\delta _M)$ as the $\mathbb{C}((t))$-vector space of the $\gamma$-invariant elements of $Univ_{rs}\otimes _{\mathbb{C}}V$. On the last space the operator $\delta$ is defined by $\delta (u\otimes v)=\delta (u)\otimes v$ for 
$u\in Univ_{rs},\ v\in V$. The restriction of this $\delta$ to $M$ is the $\delta _M$.
The above describes an equivalence between the category of the regular singular 
differential modules and the category of the pairs $(V,\gamma )$ consisting of a finite dimensional vector space $V$ and an $\gamma \in {\rm GL}(V)$. This equivalence respects all
constructions of linear algebra, in particular tensor products.\\

This maybe somewhat abstract way to deal with regular singular differential modules
extends to the case of {\it irregular singular} differential modules. It greatly simplifies the various classical classification results. 

A typical example of an irregular singular module
is the one-dimensional module
 $M=\mathbb{C}((z))e$ with $\delta e=(a+q)e $ with $q\in t^{-1}\mathbb{C}[t^{-1}],\ q\neq 0,\ a\in \mathbb{C}$. We call $a+q$ the {\it (generalized) local exponent} and $q$ the {\it eigenvalue}. One observes that $q$ is unique and $a$ is unique up to a shift over an integer.
  
 A more complicated example is the following. For any integer 
 $n\geq 1$ we consider the field extension $\mathbb{C}((t^{1/n}))$ of degree $n$ and an
 element $a+q\in \mathbb{C}+(t^{-1/n}\mathbb{C}[t^{-1/n}])$. Then we define the differential
 module $\mathbb{C}((t^{1/n}))e$ of rank one over $\mathbb{C}((t^{1/n}))$ by
 $\delta (e)=(a+q)e$. Now $M$  is equal to this object, seen as a differential module over
 the field $\mathbb{C}((t))$. This module has dimension $n$. From these examples and
 the regular singular differential modules one can build, by constructions of linear algebra,
 all differential modules. In order to have solutions for all differential modules over
 $\mathbb{C}((t))$ we have to introduce new symbols $e(q)$ for 
 $q\in \mathcal{Q}:=\bigcup _{n\geq 1}t^{-1/n}\mathbb{C}[t^{-1/n}]$. The rules are
 $t\frac{d}{dt}e(q)=q\cdot e(q)$ and $e(q_1)e(q_2)=e(q_1+q_2)$.  One obtains the
 differential ring extension $Univ:=\oplus _{q\in \mathcal{Q}}Univ_{rs}\cdot e(q)$, equipped
 with the differential automorphism $\gamma$, extending the $\gamma$ on $Univ_{rs}$
 by $\gamma e(q)=e(\gamma q)$. The meaning of $\gamma (q)$ is already defined 
 since $\gamma (t^a)=e^{2\pi ia}t^a$ for any $a\in \mathbb{C}$. The intuitive meaning
 of $e(q)$ is rather evident, namely $e^{\int q\frac{dt}{t}}$. Since the latter is a 
 multivalued function we avoid its use and use the symbol $e(q)$ instead. 

 The solution space $V$ of a differential module $M$, say, represented by the matrix
 equation $t\frac{d}{dt}+A$ where $A$ is a $d\times d$-matrix with coefficients in 
 $\mathbb{C}((t))$, is defined as $V= \{v\in (Univ )^d \  |\ (t\frac{d}{dt}+A)(v)=0  \}$.
 In other words $V=\{ v\in Univ \otimes _{\mathbb{C}((t))}M\ | \ \delta (v)=0\}$.
   As before, there is an action of $\gamma$ on $V$. Moreover $V$ has a direct sum
   decomposition $V=\oplus _{q\in \mathcal{Q}}V_q$ where 
   $V_q:=\{v\in Univ_{rs}\cdot e(q)\otimes _{\mathbb{C}((t))}M\ |\ \delta (v)=0\}$. As the dimension of
   $V$ is finite (equal to $\dim _{\mathbb{C}((t))}M$), almost all $V_q$ are 0. Clearly
   $\gamma (V_q)=V_{\gamma (q)}$. Thus we have attached to $M$ a tuple
   $(V,\{V_q\},\gamma )$ consisting of a finite dimensional vector space $V$ over
   $\mathbb{C}$ and subspaces $V_q$ with $V=\oplus _{q\in \mathcal{Q}}V_q$ and
   an element $\gamma \in {\rm GL}(V)$ such that $\gamma (V_q)=V_{\gamma (q)}$ for
   all $q$. From this tuple one can recover $(M,\delta _M)$ as the $\mathbb{C}((t))$-vector
    space of the $\gamma$-invariant elements of 
   $\oplus _{q\in \mathcal{Q}}Univ_{rs}\cdot e(-q)\otimes _{\mathbb{C}}V_q$. By definition 
   $\delta$ acts as zero on $V$ and thus induces $\delta _M$. In fact, 
   $M\mapsto (V,\{V_q\},\gamma )$ defines an equivalence of categories commuting with all
   operations of linear algebra, and in particular with  tensor product.  Our formal
   classification is that of the tuples $(V,\{V_q\},\gamma )$.
   
   The elements $q$ with $V_q\neq 0$ are called the {\it eigenvalues} and $\gamma$,
   acting on $V$, is called the {\it formal monodromy}. The {\it Katz invariant} $r(M)$ of $M$
    is the maximum of the degrees in $t^{-1}$ of the eigenvalues $q$.\\
   
   \begin{examples}{\rm
   We {\it illustrate} the above by classifying all differential modules $M$ of 
   dimension 2 such that $\Lambda ^2M$ is isomorphic to the trivial module 
   ${\bf 1}:=\mathbb{C}((t))e$ with $\delta e=0$. The possibilities for the tuple 
   $(V,\{V_q\},\gamma )$ are
   \begin{itemize}
   \item[(i)] $V=V_0$ and $\gamma \in {\rm SL}(V)$. This is the {\it regular singular} case.
   By taking a logarithm $2\pi i A$ of $\gamma$ one obtains the matrix equation
   $t\frac{d}{dt}+A$.
   \item[(ii)] 
   $V=V_q\oplus V_{-q}$ with $q=a_1t^{-r}+\cdots +a_rt^{-1}$ and $a_1\neq 0$.This is
    the {\it  unramified irregular} case with eigenvalues $\pm q$ and Katz invariant $r$. 
    Give the spaces $V_q,V_{-q}$ a basis $e_1$ and $e_2$. Then the matrix of
     $\gamma$ has the form 
   ${\alpha \ \ \ \ 0 \choose \ 0  \ \ \ \alpha ^{-1}}$. A corresponding matrix differential
   equation can be written as $t\frac{d}{dt}+\left(\begin{array}{cc} q+a&0\\
   0& -q-a\end{array}\right)$ with $e^{2\pi i a}=\alpha $.
   \item[(iii)] For the {\it ramified irregular} case $V=V_q\oplus V_{-q}$ with Katz invariant $r$, 
   one must  have $r=\frac{1}{2}+m,\ m\in \mathbb{Z},\ m\geq 0$ and
    $q=  t^{1/2}(a_1t^{-r-1/2}+\cdots +a_*t^{-1}),\ a_1\neq 0$. This follows from
    $\gamma (q)=-q$. Consider a basis $b_1$ and  $b_2$ for $V_q$ and $V_{-q}$ such
     that $\gamma (b_1)=b_2$. Then $\gamma (b_2)=-b_1$ since $\gamma \in {\rm SL}(V)$.
   For the computation of the corresponding differential module, it is easier to compute
   first the invariants under $\gamma ^2$. This yields a differential module 
   $N=\oplus _{i=1}^2 \mathbb{C}((t^{1/2}))e_i$ over $\mathbb{C}((t^{1/2}))$ with
   $\delta (e_1)=qe_1,\ \delta (e_2)=-qe_2$. The element $\gamma$ acts on $N$  by
   $\gamma e_1=e_2, \ \gamma e_2=e_1$ and $\gamma t^{1/2}=-t^{1/2}$. The module
   $M$ of the invariants under $\gamma$ has the basis 
   $f_1=e_1+e_2,\ f_2=t^{1/2}(e_1-e_2)$.  Write $q=t^{1/2}h$.
   Then $\delta$ on the basis $f_1,f_2$ yields the matrix differential equation
   $t\frac{d}{dt}+ \left(\begin{array}{cc} 0 & th\\ h & \frac{1}{2}\end{array}\right)$.
   
   \end{itemize}
   }\end{examples}
 \begin{definition and examples} Invariant lattices. {\rm \\    
 Let the differential module $M=(M,\delta )$ have Katz invariant $r$ and let $r^+$ denote the smallest integer $\geq r$. A free submodule $\Lambda \subset M$ over 
 $\mathbb{C}[[t]]$,  containing a basis of $M$ is usually called a {\it lattice}. We say that
 $\Lambda$ is an  {\it invariant lattice} if, moreover
 $t^{r^+}\delta \Lambda \subset \Lambda$. There exists an invariant lattice, in fact
 the {\it standard lattice} defined  in \cite{PS}, p. 307 and p. 311,  is an
 invariant lattice. For later use we will compute all invariant lattices for the items of  Examples 1.1.
 \begin{enumerate}
 \item[(i)] If the regular singular module $M$ is {\it non resonant}, then $M$ has a basis
 $e_1,e_2$ with $\delta e_1=\frac{\theta}{2}e_1,\ \delta e_2=-\frac{\theta}{2}e_2$ and
 $\theta \not \in \mathbb{Z}$. The notation $\frac{\theta}{2}$ is chosen for historical reasons. The  invariant lattices are $\mathbb{C}[[t]]t^{n_1}e_1+\mathbb{C}[[t]]t^{n_2}e_2$ for any $n_1,n_2\in \mathbb{Z}$. \\
 A typical {\it resonant} case is $M=\mathbb{C}((t))e_1+\mathbb{C}((t))e_2$ with
 $\delta e_1=e_2,\ \delta e_2=0$. The invariant lattices are 
 $\mathbb{C}[[t]]t^{n_1}e_1+\mathbb{C}[[t]]t^{n_2}e_2$ with $n_1,n_2\in \mathbb{Z}$
 and $n_1\geq n_2$.
  
\item[(ii)] $M$ has a basis $e_1,e_2$ with $\delta e_1=(q+a)e_1,\ \delta e_2=-(q+a)e_2$.
In this case $r^+=r$ and the invariant lattices are  $\mathbb{C}[[t]]t^{n_1}e_1+\mathbb{C}[[t]]t^{n_2}e_2$ for any $n_1,n_2\in \mathbb{Z}$.   

\item[(iii)] $M$ has basis $f_1,f_2$ with $\delta f_1=hf_2,\ \delta f_2=th f_1+\frac{1}{2}f_2$.
Now $r^+=r+\frac{1}{2}$. The  invariant lattices are only the lattices
$t^n\cdot (\mathbb{C}[[t]]f_1+\mathbb{C}[[t]]f_2)$ and   $t^n\cdot (\mathbb{C}[[t]]tf_1+
\mathbb{C}[[t]]f_2)$ where $n\in \mathbb{Z}$.
\end{enumerate}
We omit the easy proofs for (i) and (ii). {\it The proof of case} (iii): \\
Consider the operator $\Delta =h^{-1}\delta$ on $M$. Thus $\Delta f_1=f_2,\
\Delta f_2=tf_1+\frac{1}{2h}f_2$ and $\Delta (fm)= (h^{-1}t\frac{df}{dt})\cdot m+f\cdot \Delta (m)$.  

A lattice $\Lambda$ is invariant if and only if $\Delta \Lambda \subset \Lambda$.  
If $\Lambda$ is an invariant lattice then also $t^n\cdot \Lambda$ for any $n\in \mathbb{Z}$.
The lattices generated by $f_1,f_2$ and by $tf_1,f_2$ are clearly invariant. Let 
$\Lambda$ be any invariant lattice. After multiplication by some power of $t$ we may suppose that $\Lambda \subset  (\mathbb{C}[[t]]f_1+\mathbb{C}[[t]]f_2)$ and not contained
in $t\cdot (\mathbb{C}[[t]]f_1+\mathbb{C}[[t]]f_2)$. If $\Lambda =\mathbb{C}[[t]]f_1+\mathbb{C}[[t]]f_2$, then we are finished. If not we consider the invariant lattice
 $\Lambda +t\cdot (\mathbb{C}[[t]]f_1+\mathbb{C}[[t]]f_2)$. Since $\Delta$ induces on
 $(\mathbb{C}[[t]]f_1+\mathbb{C}[[t]]f_2)/t\cdot (\mathbb{C}[[t]]f_1+\mathbb{C}[[t]]f_2)$ 
a nilpotent map with only one proper invariant subspace, namely generated by the image
of $f_2$, we have that $\Lambda +t\cdot (\mathbb{C}[[t]]f_1+\mathbb{C}[[t]]f_2)= 
(\mathbb{C}[[t]]tf_1+\mathbb{C}[[t]]f_2)$. It follows that $\Lambda$ contains an element
of the form $af_1+f_2$ for some $a\in t\mathbb{C}[[t]]$. Now
\[\Delta(af_1+f_2)-(a+\frac{1}{2h})(af_1+f_2)=
(t+h^{-1}t\frac{da}{dt}-(a+\frac{1}{2h})a)f_1\in \Lambda  . \]
Thus $tf_1\in \Lambda$ and also $f_2\in \Lambda$. Then
$\Lambda =\mathbb{C}[[t]]tf_1+\mathbb{C}[[t]]f_2$.\\

\noindent 
{\it Comment}. Two lattices $\Lambda _1, \Lambda _2$ in $\mathbb{C}((t))^2$ 
are  called {\it equivalent} if there exists an integer $n$ with $\Lambda _1=t^n\cdot \Lambda _2$. Two classes of lattices $[\Lambda _1],\  [\Lambda _2]$ form an edge if 
the representatives $\Lambda _1,\Lambda _2$ can be chosen such that there are proper inclusions $t\cdot \Lambda _1\subset \Lambda _2\subset \Lambda _1$. 
One obtains a tree with vertices the classes of lattices and edges as above. If one replaces $\mathbb{C}$ by a finite field then this object is the well known  Bruhat-Tits tree.  

The classes of the invariant lattices form a subset of this tree.  This subset is a line for the first case of (i) and a half line for the second case of (i). In case (ii), it is again a line 
and in case (iii) this subset  consists of two vertices which form an edge. }\hfill $\square$ \end{definition and examples}

\subsection{The analytic classification}
This is the classification of the differential modules $M$ over the field of the convergent
Laurent series $\mathbb{C}(\{t\})$. Again we use the derivation  $t\frac{d}{dt}$.
One associates to $M$ the formal differential module
$\widehat{M}=\mathbb{C}((t))\otimes M$. If $\widehat{M}$ is {\it regular singular}, then
one calls $M$ also regular singular. In that case there exists also a basis $e_1,\dots
,e_d$ of $M$  such that $W:=\oplus _{i=1}^d \mathbb{C}e_i$ is invariant under 
$\delta$ and the distinct eigenvalues of the matrix $A$ of $\delta$ on $W$ do not differ
by an integer. Then $M$ is isomorphic to the module corresponding to the matrix
differential operator $t\frac{d}{dt}+A$. The usual {\it topological monodromy} around 
$t=0$ coincides with the formal monodromy and that ends the classification.\\

 If $\widehat{M}$ is {\it irregular singular}, then it induces a tuple 
 $(V,\{V_q\},\gamma )$.  The {\it singular directions} $d\in \mathbb{R}$ of $M$ depend
 only on $\widehat{M}$ and are defined as follows. Let $q_1,\dots ,q_s$ denote the eigenvalues of $\widehat{M}$.  A direction $d\in \mathbb{R}$ is singular for  $q_i-q_j$ (with $i\neq j$) if the function  $exp (\int  (q_i-q_j)\frac{dt}{t})$ has `maximal descent' for $r\rightarrow 0$ on the half line $t=re^{id}$. More explicitly, if 
 $q_i-q_j=\alpha _kt^{-k}+\cdots +\alpha _1t^{-1},\ \alpha _k\neq 0$, then $d$ is a singular direction if and only if $\alpha _kre^{-idk}$ is a positive real number.
  The collection of the singular directions is the union over $i\neq j$ of the singular directions of $q_i-q_j$. 
 
If  a direction $d$ is non singular, then there is a functorial map $mults _d$, the 
{\it multisummation in the direction $d$}, which maps the (symbolic) solution space 
$V$ to the space of the actual  solutions $V(S)$ in a certain sector $S$ at $t=0$ around  $d$. For every $v\in V$ the element $mults_d(v)$ has $v$ as its asymptotic expansion.

For each singular direction $d$, there is an analytic object, namely the Stokes map 
$St_d\in {\rm GL}(V)$. The Stokes map $St_d$ is defined by comparing the multisummation at directions $d^-<d<d^+$ close to $d$. More precisely 
$mults_{d^+}\circ St_d=mults_{d^-}$. The map $St_d$ is unipotent and has the form
$Id+\sum _{i,j}L_{i,j}$ where the sum is taking over all pairs $i,j$ such that $d$ is singular for $q_i-q_j$ and where $L_{ij}$ is a linear map from $V_{q_i}$ to $V_{q_j}$.
The isomorphy class of $M$ induces a tuple 
$(V,\{V_q\},\gamma ,\{St_d\})$, where the $St_d$ are described above and where moreover the relation $\gamma ^{-1}St_d\gamma =St_{d+2\pi }$ holds.\\ 
The {\bf main result} of the  asymptotic analysis of irregular singularities is: \\

\noindent {\it The category of the differential modules over $\mathbb{C}(\{t\})$  is equivalent to the category of the tuples $(V,\{V_q\},\gamma ,\{St _d\})$, satisfying the above properties. This equivalence respects all constructions of linear algebra,
in particular the tensor product.  }\\

\noindent An {\bf important property} that we will use is:\\

\noindent 
{\it Let $0\leq d_1<\dots <d_s< 2\pi $ denote the singular directions in $[0,2\pi )$. Then the topological monodromy around the singular point is conjugated to 
$\gamma \circ St_{d_s}\circ \cdots \circ St_{d_1}$.}\\

\noindent We note that this conjugation depends on the way the solution space at a point close to the singular point $t=0$ is identified with the (formal) solution space $V$.
Now we illustrate the above by continuing Examples 1.1. 

\begin{examples} {\rm Let $M$ be an irregular differential module of dimension 2 over 
$\mathbb{C}(\{t\})$ such that $\Lambda ^2M=\{\bf 1\}$. \\  
(ii) If $\widehat{M}$ is unramified with Katz invariant $r$, then 
$V=V_q\oplus V_{-q}$, $q\in t^{-1}\mathbb{C}[t^{-1}]$ has degree $r$ in $t^{-1}$.
We recall that $\gamma$ has the matrix  ${\alpha \ \ \ \ 0 \choose \ 0  \ \ \ \alpha ^{-1}}$
on any basis $e_1,e_2$ of $V$ such that $V_q=\mathbb{C}e_1,\ V_{-q}=\mathbb{C}e_2$. For $q-(-q)$ there are $r$ singular directions (in $[0,2\pi )$)  and the same holds for
$(-q)-q$. The two pairs of singular directions intertwine. For the first ones the Stokes
matrices (w.r.t. the basis $e_1,e_2$) have the form ${1\ *\choose 0\ 1 }$, and for
the second ones the form is ${1\ 0 \choose *\ 1 }$.   Thus the Stokes matrices are given
by $2r$ constants $c_i$ and the topological monodromy around $t=0$ is up to conjugation (and we may choose the order) equal to
\[ {\alpha \ \ \ \ 0 \choose \ 0  \ \ \ \alpha ^{-1}}\cdot {1\ 0 \choose c_1\ 1  }\cdot
{1\ c_2\choose 0 \ 1 }\cdots {1\ \ 0 \choose c_{2r-1} \ 1 }\cdot 
{1\ c_{2r} \choose 0 \ \  1 } .\]
The basis $e_1,e_2$ is not unique, whereas the 1-dimensional spaces $V_q$ and
$V_{-q}$ are.  If we want Stokes data, independent of the choice of $e_1,e_2$,
then we have to divide the space $\mathbb{A}^{2r}$ of the tuples
$(c_1,\dots ,c_{2r})$ by the action of the group $\mathbb{G}_m$. For this action the $c_{2i}$ can be given weight $+1$ and the $c_{2i-1}$ weight $-1$. 

\noindent
(iii) If $\widehat{M}$ is ramified, then there are again $2r$ singular directions in
$[0, 2\pi )$ and Stokes matrices of the form  ${1\ *\choose 0\ 1 }$ and ${1\ 0 \choose *\ 1 }$. The singular directions intertwine. We choose now a basis $e_1,e_2$ of $V$
with $V_{q}=\mathbb{C}e_1,\ V_{-q}=\mathbb{C}e_2$  and $\gamma e_1=e_2,
\gamma e_2=-e_1$. The topological monodromy around $t=0$ is conjugated to the product 
\[ { 0\  -1\choose 1\ \  \ 0 }\cdot {1\ 0\choose c_1\ 1 }\cdot {1\ c_2\choose 0 \ 1 }
\cdots {1\ \ \ 0 \choose c_{2r} \ 0} .\]
In this case one may change the basis $e_1,e_2$ only into $\lambda e_1, \lambda e_2$
with $\lambda \in \mathbb{C}^*$. This does not have an effect on the Stokes data
$(c_1,\dots ,c_{2r})$ and no division by $\mathbb{G}_m$ is needed.
}\hfill $\square$ \end{examples}

\subsection{ The data for global differential modules}
By a global differential module we mean a differential module $M$ over the field 
$K=\mathbb{C}(z)$. We investigate the data that will describe $M$.\\
 The first case that we consider is classical, namely:\\
{\it  The position of the singular points  $\{p_1,\dots ,p_r\}$ of $M$ is fixed and all the singular points are supposed to be regular singular}.

One  introduces the monodromy for  $M$ in the usual way. That is, one chooses a base point  $b\in \mathbb{P}^1\setminus \{p_1,\dots ,p_r\}$ and loops 
 $\alpha _1,\dots ,\alpha _r$ around the singular points, generating the fundamental
 group $\pi _1:=\pi _1(\mathbb{P}^1\setminus \{p_1,\dots ,p_r\},b)$. There is only one relation, namely 
 $\alpha _1\cdots \alpha _r=1$. Then $M$ induces a monodromy homomorphism
 \[mon_M: \pi _1\rightarrow {\rm GL}(V(b))\ ,\] where  $V(b)$ denotes the solution space at $b$. We note that $mon_M (\alpha_i)$ is conjugated to the local monodromy at $p_i$ (formal or topological). A weak solution of the Riemann--Hilbert problem reads 
 (\cite{PS},Thm 6.15): 
 
\begin{proposition} The functor $M\mapsto mon_M$ from the category of the differential
modules with regular singularities at $\{p_1,\dots ,p_r\}$ to the  category of the finite dimensional complex representations of $\pi$, is an equivalence of categories. This 
equivalence respects all constructions of linear algebra, in particular tensor products. 
\end{proposition}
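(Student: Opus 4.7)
The plan is to construct a quasi-inverse to the monodromy functor and check the usual properties. I will break this into local-to-global construction, faithfulness, fullness, and tensor compatibility.

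\smallskip
\noindent\textbf{Essential surjectivity (the main task).} Given $\rho : \pi_1 \to \mathrm{GL}(V(b))$, I first build an analytic flat bundle on $U = \mathbb{P}^1 \setminus \{p_1,\dots,p_r\}$ as $\tilde U \times_{\pi_1} V(b)$, where $\tilde U \to U$ is the universal cover; this carries a canonical holomorphic connection $\nabla^{an}$ whose monodromy is $\rho$. Next I extend across each $p_i$ with a regular singularity. For this I pick a small punctured disk $D_i^*$ around $p_i$, use the local equivalence already recorded in the paper (regular singular modules over $\mathbb{C}((t))$ correspond to pairs $(V,\gamma)$), and choose a logarithm $2\pi i A_i$ of the local monodromy $\rho(\alpha_i)$; this produces the Deligne canonical extension whose analytic form is $t\frac{d}{dt} + A_i$ on $D_i$. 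The local and global analytic bundles are glued on $D_i^*$ via an isomorphism of local systems, yielding a connection $(\mathcal{V}^{an},\nabla^{an})$ on the whole $\mathbb{P}^1$. By GAGA applied to the projective variety $\mathbb{P}^1$, $\mathcal{V}^{an}$ is algebraic and $\nabla^{an}$ descends to an algebraic meromorphic connection with at worst regular singularities at the $p_i$. Taking the generic fibre (stalk at the generic point) gives a differential module $M$ over $K=\mathbb{C}(z)$, whose monodromy representation is isomorphic to $\rho$ by construction.

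\smallskip
\noindent\textbf{Full faithfulness.} For differential modules $M,N$ with regular singularities in $\{p_1,\dots,p_r\}$, morphisms $M \to N$ correspond bijectively to horizontal sections of the internal Hom bundle $\mathrm{Hom}(M,N)$, which is again a regular singular module on the same singular set. A horizontal section on $U$ is the same data as a $\pi_1$-equivariant linear map $V_M(b) \to V_N(b)$. The nontrivial direction is to extend a horizontal section defined on $U$ to an algebraic morphism: by the growth estimates characterizing regular singularities, a multivalued flat section with polynomial growth at each $p_i$ is automatically meromorphic across $p_i$; this is exactly the place where the regular singularity hypothesis is used. Combined with GAGA for global sections on $\mathbb{P}^1$, one concludes that $\mathrm{Hom}(M,N) \to \mathrm{Hom}_{\pi_1}(V_M(b), V_N(b))$ is bijective.

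\smallskip
\noindent\textbf{Tensor compatibility.} One checks that the solution space of $M\otimes N$ at $b$ is naturally $V_M(b)\otimes V_N(b)$, with $\pi_1$ acting diagonally: the connection on $M\otimes N$ is the Leibniz extension, so a flat section produced from two flat sections of $M$ and $N$ is flat, and counting dimensions gives all of them. The compatibility with internal Hom, duals, direct sums and subquotients is formal from this.

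\smallskip
\noindent\textbf{Expected main obstacle.} The hard step is essential surjectivity, more precisely the simultaneous extension at all singular points and the passage from analytic to algebraic. The algebraic existence of the extension with regular singularity depends on having polynomial growth of solutions, which in turn requires the careful choice of logarithm $A_i$ (non-resonant branch if one wants uniqueness of the Deligne extension) and the verification that the glued analytic bundle is algebraizable. Once GAGA is invoked the remaining verifications are routine.
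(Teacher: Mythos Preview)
The paper does not supply a proof of this proposition; it simply records the statement and attributes it to \cite{PS}, Theorem~6.15. So there is no in-paper argument to compare against.

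Your proof is correct and is the standard Deligne-style argument: build the flat analytic bundle on the punctured sphere from the representation via the associated-bundle construction, extend across each puncture by choosing a logarithm of the local monodromy (the Deligne canonical extension), invoke GAGA on $\mathbb{P}^1$ to algebraize, and pass to the generic fibre. Your treatment of full faithfulness via horizontal sections of the internal $\mathrm{Hom}$ and the moderate-growth criterion for regular singularities is exactly right, as is the tensor compatibility check. One small remark: for mere essential surjectivity you do not need to insist on the non-resonant (canonical) choice of logarithm---any logarithm of $\rho(\alpha_i)$ yields \emph{some} regular singular extension, and the resulting generic fibres over $\mathbb{C}(z)$ are all isomorphic; the canonical choice only matters if you want a distinguished lattice or functoriality of the extension as a bundle, which is not part of the statement. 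Otherwise nothing is missing.
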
 
 
\noindent {\bf Notation}. For any point $p\in \mathbb{P}^1$ we introduce the local parameter $t_p$,
which is $z-p$ if $p\in \mathbb{C}$ and $z^{-1}$ for $p=\infty$. The field $K_p$
is the field of the meromorphic functions at $p$, i.e., $\mathbb{C}(\{t_p\})$ and
$\widehat{K}_p$ is the completion of $K_p$, i.e., $\mathbb{C}((t_p))$. Further 
$O_p\subset K_p$ and $\widehat{O}_p\subset \widehat{K}_p$ are the valuation rings,
i.e., $O_p=\mathbb{C}\{t_p\}$ and $\widehat{O}_p=\mathbb{C}[[t_p]]$.\\

\noindent One associates to a global differential module $M$ with fixed singularities 
$\{ p_1,\dots ,p_r\}$, the  data: the isomorphy classes of the $\{K_{p_i}\otimes _KM \}$ and the monodromy representation $mon _M$ as above. Now we give an example
showing that {\it this is not sufficient for the reconstruction of $M$}.

\begin{example} Two singular points $0$ and $\infty$, both irregular. \\ {\rm 
At both points we prescribe local analytic data for the differential module $M$. In other words, we prescribe the two analytic differential modules $M_0=K_0\otimes M$ and 
$M_\infty=K_\infty \otimes M$.  As we will see in Observations 1.8,
this leads to a connection $(\mathcal{M},\nabla )$ on $\mathbb{P}^1$, where $\mathcal{M}$
is a vector bundle and $\nabla :\mathcal{M}\rightarrow \Omega (k[0]+k[\infty ])\otimes \mathcal{M}$ (for some $k>0$). The restrictions $T_0$ and $T_1$ of this connection to the two open sets
$\mathbb{P}^1\setminus \{0\}$ and $\mathbb{P}^1\setminus \{\infty \}$ are known from
the given data $M_0$ and $M_\infty$. We suppose now that the topological
monodromies of $M_0$ and $M_\infty$ are trivial. Thus the restrictions $T_{0,1}$ and 
$T_{1,0}$ of $T_0$ and $T_1$
to the open set $\mathbb{P}^1\setminus \{0,\infty \}$ are trivial. The glueing is given by
an isomorphism $T_{0,1}\rightarrow T_{1,0}$.  Let $m$ denote the dimension of $M$.
Then an isomorphism is given by a linear bijection $L:\mathbb{C}^m\rightarrow \mathbb{C}^m$. Further $L$ is unique up to multiplication (on the left, respectively on the right) by an automorphism of $M_0$ and $M_\infty$. One can easily produce $M_0$ and $M_\infty$
which have only $\mathbb{C}^*$ as group of automorphisms. Therefore the possible connections
$(\mathcal{M},\nabla )$ and also the possible differential modules $M$ are in bijection with
 ${\rm PGL}(m,\mathbb{C})$.   }\hfill $\square$\end{example}

\begin{definition} Links and the formal and analytic data. {\rm  \\
What is missing is a `link' between the solution space $V(b)$ at the base point $b$ with the (symbolic) solution spaces $V(p_i)$ at the singular points. This idea goes back
to the work of Jimbo--Miwa--Ueno \cite{JMU}.
 We make the following 
construction to remedy this.

As before, $\alpha _1,\dots ,\alpha _r$ are loops starting at $b$ around the singular
points. For each $p_i$ we choose a point $p_i^*$ on the loop close to
 $p_i$ and a line
segment $[p_i^*,p_i]$ which is a  non--singular direction at $p_i$. 
The `{\it link}'
$L_i:V(b)\rightarrow V(p_i)$ is defined by analytic continuation 
from $V(b)$ to $V(p_i^*)$,
followed by the inverse of the multisummation map 
$mults :V(p_i)\rightarrow V(p_i^*)$
in the direction $[p_i^*,p_i]$ (seen as an element in $\mathbb{R}$). 
The role of the monodromy map $mon_M$ is taken over by  links, i.e., 
 the linear bijections
$L_1,\dots ,L_r$. The multisummation $mults :V(p_i)\rightarrow 
V(p_i^*)$ (in the direction
$[p_i,p_i^*]$) is used to identify the two vector spaces.
Then the local topological monodromy $top_i$, along a circle starting 
in $p_i^*$, is expressed 
as a product of the Stokes maps and the formal monodromy at $p_i$. 
The relation $\alpha_1\cdots \alpha_r=1$  translates into
\[ L_r^{-1}\circ top_r\circ L_r \dots L_2^{-1}\circ top_2\circ L_2\circ L_1^{-1}\circ top _1 \circ L_1=1.\]

The `{\it formal and the analytic data}' for $M$ are defined as:
\begin{enumerate}
\item[(1)] The position of the singular points $p_1,\dots ,p_r$;
\item[(2)] for each $i$, the formal structure $(V(p_i),\{V(p_i)_q\},\gamma _i)$ at $p_i$;
\item[(3)] for each $i$, the Stokes maps at $p_i$; 
\item[(4)] the links $L_i:W\rightarrow V(p_i)$.
\item[(5)] These data are supposed to satisfy the relation
\[ L_r^{-1}\circ top_r\circ L_r \dots L_2^{-1}\circ top_2\circ L_2\circ L_1^{-1}\circ top _1 \circ L_1=1.\]
\end{enumerate}
Here $W$ stands for the space $V(b)$. The {\it formal part} of the data is (1) (the position of the singular points) and the eigenvalues $q$ at each singular point.  
The {\it analytic part} of the data is the direct sum decompositions $\oplus _qV(p_i)_q$
 of the spaces $V(p_i)$, including the permutation of the $V(p_i)_q$ induced by $\gamma$; further  (3) and (4), since this combines the links and the Stokes maps. 
 We observe that  these `formal and analytic data' are considered up to the automorphisms of $W$ and of the $V(p_i)$.
 
 One might use $L_1$ to identify $W$ with $V(p_1)$ and then one is only left with links $L_i:V(p_1)\rightarrow V(p_i)$ for $i=2,\dots ,r$.
Another way to reduce the number of links by one, is to choose as base point $b$ the singular point $p_1$ and define links $L_i:V(p_1)\rightarrow V(p_i)$ for $i=2\dots ,r$.
  }\hfill $\square$ \end{definition}

\begin{theorem} For  given  `formal and analytic data', as above, there exists a differential module $M$ over $K=\mathbb{C}(z)$ inducing the data. Moreover $M$ is unique up to isomorphism.
\end{theorem}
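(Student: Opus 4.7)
The plan is to construct the desired differential module as a global analytic connection on $\mathbb{P}^1$, assembled from local analytic models at each $p_i$ and a flat connection on the complement, glued via the links; then pass from analytic to algebraic by GAGA. Uniqueness will follow by running the construction backwards, combining local analytic isomorphisms with a global monodromy isomorphism into a single global one.

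For existence, the analytic classification recalled at the end of Section~1.3 assigns to each tuple $(V(p_i),\{V(p_i)_q\},\gamma_i,\{St_d\})$ a convergent differential module, which may be represented by a meromorphic connection $(\mathcal{E}_i,\nabla_i)$ on a small disc $D_i$ around $p_i$. On the complement $U:=\mathbb{P}^1\setminus\{p_1,\dots,p_r\}$, the relation in item (5) of Definition~1.5 ensures that $\alpha_i\mapsto L_i^{-1}\circ top_i\circ L_i$ defines a representation of $\pi_1(U,b)$ on $W$. Proposition~1.6 (the regular-singular Riemann--Hilbert, applied in the trivial case where $U$ carries no singularities) produces a flat connection $(\mathcal{E}_U,\nabla_U)$ on $U$ realising this representation, unique up to isomorphism. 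On each punctured disc $D_i^*:=D_i\setminus\{p_i\}$ the restrictions of $\nabla_U$ and $\nabla_i$ are two flat analytic connections whose monodromies are conjugate, the conjugating bijection being the composition of the link $L_i$ with the multisummation identification in the direction $[p_i^*,p_i]$; this conjugation lifts canonically to an analytic isomorphism $\varphi_i:(\mathcal{E}_U,\nabla_U)|_{D_i^*}\to(\mathcal{E}_i,\nabla_i)|_{D_i^*}$. Gluing $(\mathcal{E}_U,\nabla_U)$ to the $(\mathcal{E}_i,\nabla_i)$ along the $\varphi_i$ yields an analytic vector bundle with meromorphic connection on $\mathbb{P}^1$; GAGA makes it algebraic, hence the underlying $M$ is a differential module over $K=\mathbb{C}(z)$ and, by construction, it induces the prescribed data.

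For uniqueness, given two modules $M,M'$ inducing the same data, the analytic classification supplies local isomorphisms $\psi_i$ between their local analytic models at each $p_i$, while Proposition~1.6 supplies a global isomorphism $\psi_U$ between their restrictions to $U$. Compatibility on the overlaps $D_i^*$ is forced, because both $\psi_i$ and $\psi_U$ are constrained to intertwine the same link $L_i$ under the multisummation identifications that are part of the data. The $\psi_i$ and $\psi_U$ therefore patch into a single analytic isomorphism on $\mathbb{P}^1$, which is algebraic by GAGA; hence $M\cong M'$.

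The step I expect to be most delicate is the construction of the analytic gluing isomorphism $\varphi_i$ on the whole punctured disc. On any one sector of $D_i^*$ one can identify actual solutions of $\nabla_U$ (via analytic continuation from $b$ through the link $L_i$) with those of $\nabla_i$ (via multisummation in the chosen direction), but to obtain a single-valued holomorphic $\varphi_i$ on $D_i^*$ one must check that these sectorial identifications agree on their overlaps. This agreement is exactly the content of the relation expressing the topological monodromy as $\gamma\circ St_{d_s}\circ\cdots\circ St_{d_1}$ combined with the compatibility in item (5); put differently, the $\varphi_i$ is well defined precisely because the full analytic data — Stokes maps, formal monodromy and links — are linked by the prescribed relation. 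This is the place where all pieces of the data are used simultaneously, and where introducing the links (as in Definition~1.5) is essential to make the weak Riemann--Hilbert statement correct.
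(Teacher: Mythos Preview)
Your proof is correct and follows essentially the same route as the paper's: build local meromorphic connections on small discs from the analytic classification, a flat connection on the complement from the monodromy representation $\alpha_i\mapsto L_i^{-1}top_iL_i$, and glue via the links (with GAGA in the background, as in Observations~1.8). Two minor remarks. First, your numbering is off: the links are Definition~1.6, and the regular-singular equivalence you invoke is Proposition~1.4. Second, the step you flag as delicate is not where any real difficulty lies. Once $(\mathcal{E}_i,\nabla_i)$ has been produced as an honest analytic connection on $D_i$ by the classification of Section~1.3, its restriction to $D_i^*$ is an ordinary \emph{flat} connection; the gluing with $\nabla_U|_{D_i^*}$ is then determined by a single identification of solution spaces at the point $p_i^*$ (supplied by $L_i$ composed with one multisummation), and this extends to all of $D_i^*$ automatically by parallel transport---no sectorial patching is required at this stage. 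The Stokes analysis is entirely absorbed into the local classification that manufactures $(\mathcal{E}_i,\nabla_i)$. The genuine point, which the paper stresses, is rather that among the many isomorphisms between the two flat connections on $D_i^*$ the link $L_i$ singles out a specific one, and it is this choice that makes both existence and uniqueness work.
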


\begin{observations} Global differential modules and connections. \\{\rm 
Before giving the proof of Theorem 1.7, we have to make the relation between differential modules $M$
over $K=\mathbb{C}(z)$ and connections $(\mathcal{M},\nabla )$ (with singularities) on $\mathbb{P}^1$ explicit. 

Let a connection $(\mathcal{M},\nabla )$ (with singularities) be given. We note that
we may regard this connection either algebraically or analytically, 
because of the GAGA theorem.
On proper Zariski-open subsets of $\mathbb{P}^1$  we sometimes see $\mathcal{M}$ as an analytic vector bundle. The generic 
fibre $M$ of $\mathcal{M}$ is a vector space of finite dimension over $K$, equipped with a
$\nabla :M\rightarrow \Omega _{K/\mathbb{C}}\otimes M$. After identifying 
$\Omega _{K/\mathbb{C}}$ with $Kdz$, this gives $M$ the structure of a differential module.

On the other hand, let a differential module $M$ be given. This is written as a 
(generic) connection $\nabla :M\rightarrow \Omega _{K/\mathbb{C}}\otimes M$.
Consider a set  $\{p_1,\dots ,p_r\}\subset \mathbb{P}^1$ of points including the singular points of $M$. For each $i$ one chooses an $\widehat{O}_{p_i}$-lattice $\Lambda _i$ in
$\widehat{K}_{p_i}\otimes M$ and let $k_i$ satisfy $\nabla (\Lambda _i)\subset
t_i^{-k_i}dt_i\otimes \Lambda _i$ (where $t_i$ is the local parameter at $p_i$). For 
$p\not \in \{p_1,\dots ,p_r\}$, the module $\widehat{K}_p\otimes M$ is non singular and there is a unique $\widehat{O}_p$-lattice $\Lambda _p$ with 
$\nabla (\Lambda _p)\subset dt_p\otimes \Lambda _p$, where $t_p$ denotes the local
parameter at $p$. Then there exists a unique connection $(\mathcal{M},\nabla )$ on $\mathbb{P}^1$ having the following properties (see \cite{PS}, Lemma 6.16):
\begin{enumerate}
\item $\mathcal{M}(V)\subset M$ for all, non empty, Zariski--open $V\subset \mathbb{P}^1$.
\item There is a basis $e_1,\dots ,e_m$ of $M$ and a non empty Zariski--open subset 
$U\subset \mathbb{P}^1$ such that the restriction of $\mathcal{M}$ to $U$ is the free algebraic vector bundle $O_Ue_1\oplus \cdots \oplus O_Ue_m$.
\item For each $p_i$ one has $\widehat{\mathcal{M}}_{p_i}=\Lambda _i$.
\item For $p\not \in \{p_1,\dots ,p_r\}$ one has $\widehat{\mathcal{M}}_p=\Lambda _p$.
\item $\nabla :\mathcal{M}\rightarrow \Omega (\sum k_i[p_i])\otimes \mathcal{M}$. 
\end{enumerate}  

We still need another ingredient for the proof of Theorem 1.7. Let a differential module $N$ over $K_p=\mathbb{C}(\{t_p\})$ be given and be  written in the form $\nabla :N\rightarrow K_pdt_p\otimes N$.
Choose any $\mathbb{C}\{t_p\}$-lattice $\Lambda \subset N$ and let $k\geq 0$ be such that $\nabla (\Lambda) \subset t_p^{-k}dt_p\otimes \Lambda$. Then the latter map extends to
a connection $(\mathcal{N},\nabla )$, defined on a suitable small disk around $p$ and has the property $\nabla :\mathcal{N}\rightarrow \Omega (k[p])\otimes \mathcal{N}$. We note that this extension depends on the choice of the lattice $\Lambda$ or more precisely on the unique lattice $\Lambda '$ in $\mathbb{C}((t_p))\otimes N$ with
 $\Lambda '\cap N=\Lambda$. }\hfill $\square$ \end{observations}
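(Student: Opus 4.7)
The observations compile three closely related structural facts rather than a single theorem, and I would establish them in sequence. They are: (a) the generic-fibre functor induces an equivalence between algebraic connections with singularities on $\mathbb{P}^1$ and differential modules over $K=\mathbb{C}(z)$; (b) given a differential module $M$ over $K$, a finite set $\{p_1,\dots,p_r\}$ containing the singular locus, and a choice of $\widehat{O}_{p_i}$-lattices $\Lambda_i\subset \widehat{K}_{p_i}\otimes M$ satisfying $\nabla(\Lambda_i)\subset t_i^{-k_i}\,dt_i\otimes\Lambda_i$, there is a unique connection $(\mathcal{M},\nabla)$ on $\mathbb{P}^1$ realising these lattices and satisfying (1)--(5); and (c) a local analytic module $N$ over $K_p$ with chosen lattice $\Lambda$ extends uniquely to a connection on a small disk around $p$. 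Claim (a) is a routine consequence of GAGA together with the standard description of bundles on a smooth projective curve in terms of their generic fibre plus formal completions, so I would merely cite it. The real work is (b), and (c) is a local version of the same patching argument.

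My plan for (b) is a Beauville--Laszlo style glueing. Pick any $K$-basis $e_1,\dots,e_m$ of $M$ and let $U\subset \mathbb{P}^1\setminus\{p_1,\dots,p_r\}$ be a Zariski-open subset on which the connection matrix of $\nabla$ in this basis has no poles; this yields a locally free, $\nabla$-stable $\mathcal{O}_U$-module. At each of the finitely many $p\in \mathbb{P}^1\setminus(\{p_1,\dots,p_r\}\cup U)$, the module $\widehat{K}_p\otimes M$ is non-singular, so it carries a unique $\nabla$-stable $\widehat{O}_p$-lattice $\Lambda_p$; re-glueing using these canonical lattices produces a vector bundle $\mathcal{M}_U$ on $\mathbb{P}^1\setminus\{p_1,\dots,p_r\}$ whose completion at every such $p$ is $\Lambda_p$. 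At each $p_i$ the prescribed $\Lambda_i$, together with the tautological identification $K_{p_i}\otimes_K M = K_{p_i}\otimes_{\widehat{K}_{p_i}}(\widehat{K}_{p_i}\otimes M)$ on the punctured formal disk, is the patching datum required by Beauville--Laszlo and produces the unique algebraic extension $\mathcal{M}$ across $p_i$. The resulting coherent sheaf on the smooth curve $\mathbb{P}^1$ is automatically locally free of rank $m=\dim_K M$.

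Property (5) is then bookkeeping: at each $p_i$ the hypothesis $\nabla(\Lambda_i)\subset t_i^{-k_i}\,dt_i\otimes\Lambda_i$ bounds the local pole by $k_i$, while at every other $p$ the canonical lattice satisfies $\nabla(\Lambda_p)\subset dt_p\otimes\Lambda_p$; globally this reads $\nabla\colon \mathcal{M}\to \Omega(\sum k_i[p_i])\otimes\mathcal{M}$. Uniqueness of $\mathcal{M}$ follows because a torsion-free coherent subsheaf of the constant sheaf $M$ on $\mathbb{P}^1$ is determined by its formal completions at all closed points. For (c) I would repeat the construction on a single disk: pick a $\mathbb{C}\{t_p\}$-basis of $\Lambda$, take $\mathcal{N}$ to be the corresponding trivial analytic bundle on a disk of small enough radius that the connection matrix converges, and observe that $\widehat{\mathcal{N}}_p=\widehat{O}_p\otimes_{O_p}\Lambda$, which by the uniqueness proved in (b) restricted to a disk pins $\mathcal{N}$ down.

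The main obstacle is the glueing at each $p_i$ in (b): verifying that the formal-disk datum $\Lambda_i$ descends to a genuine algebraic vector bundle on $\mathbb{P}^1$, rather than merely a coherent sheaf on a formal neighbourhood. This is the content of \cite{PS}, Lemma 6.16, and rests on the Beauville--Laszlo descent theorem, which guarantees that the triple $(\mathcal{M}_U,\Lambda_i)$, equipped with the identification over the punctured formal disk at $p_i$, assembles to a unique vector bundle on $\mathbb{P}^1$; the remaining verifications are local linear algebra.
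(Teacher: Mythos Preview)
The paper does not offer a proof of Observations~1.8 in the usual sense: the passage is expository, and the one substantive assertion (existence and uniqueness of the connection $(\mathcal{M},\nabla)$ realising prescribed formal lattices) is simply delegated to \cite{PS}, Lemma~6.16. So there is no ``paper's proof'' to compare against beyond that citation. Your proposal goes further and actually sketches an argument, which is fine and correct in substance.

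Your route through Beauville--Laszlo descent is a valid and clean way to establish (b). The more elementary construction, which is closer to what \cite{PS} does, is to define $\mathcal{M}$ directly as the subsheaf of the constant sheaf $\underline{M}$ on $\mathbb{P}^1$ whose sections over an open $V$ are those $m\in M$ lying in $\Lambda_p$ (respectively the canonical non-singular lattice) for every $p\in V$; one then checks coherence and local freeness by hand at each point. This avoids invoking the formal-disk descent machinery, at the cost of a small local-algebra verification. Both approaches yield the same object, and uniqueness follows in either case from the fact that a torsion-free subsheaf of $\underline{M}$ is determined by its completions, exactly as you say.

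One correction to your formulation of (a): the generic-fibre functor is \emph{not} an equivalence between connections on $\mathbb{P}^1$ and differential modules over $K$. Distinct connections (arising from different lattice choices at the singular points) share the same generic fibre; the paper is careful to say only that a connection yields a module and that a module \emph{together with lattice data} yields a connection. You do not actually use the equivalence claim anywhere, so this is a wording issue rather than a gap in the argument.
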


\noindent {\it Proof of Theorem} 1.7. We use the notation of Definition 1.6. For $r=0$, the data set is empty. The only module $M$ corresponding to
this is the trivial differential module (of the required dimension, say $m$).

 For $r=1$, the data at $p_1$ determines a differential module $M_1$ over $K_{p_1}$.
We choose a lattice $\Lambda \subset M_1$ (say the standard lattice) and then
the connection 
$\nabla _1:\Lambda \rightarrow \Omega (k\cdot [p_1])\otimes \Lambda$ (some $k\geq 0$) extends to a connection $(\mathcal{M}_1,\nabla _1)$ on a small open disk $D$ around 
$p_1$. The topological monodromy around $p_1$ of this connection is trivial. We consider the trivial connection $(\mathcal{M}_0,\nabla _0)$ (of the required 
rank $m$) on $\mathbb{P}^1\setminus \{p_1\}$. The two connections can be glued over 
$D\setminus \{p_1\}$, because of the triviality of $top _1$, and there results a connection
$(\mathcal{M},\nabla )$ on $\mathbb{P}^1$. Its generic fibre $M$ is a differential module
over $K$, inducing the given complete data.

 Let $N$ be another differential module over $K$ inducing the given (formal and analytic) data. Then $K_{p_1}\otimes N$ is isomorphic to $K_{p_1}\otimes M$ and we choose in $K_{p_1}\otimes N$ the lattice which maps to the lattice 
 $\Lambda \subset K_{p_1}\otimes M$. This
 yields a connection $(\mathcal{N},\nabla _N)$ with only $p_1$ as singularity.
Outside $p_1$ the two connections are isomorphic and the same holds above a small enough disk $D$ around $p_1$. The two isomorphisms above $D\setminus \{p_1\}$ will differ by an element in ${\rm GL}_m(\mathbb{C})$ (where $m=\dim M$). The isomorphism between the connections above $\mathbb{P}^1\setminus \{p_1\}$ can be changed by any element in ${\rm GL}_m(\mathbb{C})$. Then, after this change, the two connections are isomorphic and then $N$ is isomorphic to $M$.\\

Now we suppose that $r\geq 2$. The monodromy determines a connection $(\mathcal{M}_0,\nabla _0)$ on $\mathbb{P}^1\setminus \{p_1,\dots ,p_r\}$. The analytic data at $p_i$
determine a differential module over $K_{p_i}$. For this differential module we choose the standard lattice as before. This extends to a connection 
$(\mathcal{M}_i,\nabla _i)$ on a small disk $D_i$ around the point $p_i$.

 Since $top _i$ is conjugated
to the monodromy of the loop $\lambda _i$, we have that the restrictions of 
$(\mathcal{M}_0,\nabla _0)$ and $(\mathcal{M}_i,\nabla _i)$ to $D_i\setminus \{p_i\}$
are isomorphic. A priori, many isomorphisms are possible. However, the link $L_i$ determines the isomorphism. Namely, one takes the isomorphism such that the 
map $V(b)\stackrel{\alpha}{\rightarrow} V(p_i^*)
\stackrel{\beta}{\rightarrow}V(p_i)$ is equal to the given $L_i$, where $\alpha$ is the
analytic continuation for the connection $(\mathcal{M}_0,\nabla _0)$ and 
$\beta$ is the inverse for the multisummation 
$V(p_i)\rightarrow V(p_i^*)$ for the
connection $(\mathcal{M}_i,\nabla _i)$. Glueing yields a connection $(\mathcal{M},\nabla )$
on $\mathbb{P}^1$ and its generic fibre has the required properties.

Consider another differential module $N$ which produces the same (formal and analytic) data. Then $N$ yields a connection $(\mathcal{N},\nabla _N)$. This connection is chosen such that the local connections at the points $p_i$ are standard, as above. 
This connection is, above $\mathbb{P}^1\setminus \{p_1,\dots ,p_r\}$ and above each of the small enough disks $D_i$,
isomorphic to the same items for $(\mathcal{M},\nabla )$. The links $L_i$ imply that these isomorphisms glue to a global isomorphism between 
$(\mathcal{N},\nabla _N)$ and $(\mathcal{M},\nabla )$. Thus $N$ is isomorphic to 
$M$. \hfill $\square$\\

\begin{observations} {\rm  (1) In the construction in the proof of Theorem 1.7 from 
the given formal and analytic data to a connection 
$(\mathcal{M},\nabla )$ on $\mathbb{P}^1$,
one can change the lattices $\Lambda _i$ at the points $p_i$. We note that this corresponds to an elementary transformation in \cite{IIS1}, Section 3.
This will change the connection on $\mathbb{P}^1$. However the corresponding differential module 
does not change.\\
(2) By Proposition 1.4, the links are superfluous in case all the singularities are regular
singular. Another way to see this is to take the standard lattice at each point $p_i$.
Then the glueing of the connection $(\mathcal{M}_0,\nabla _0)$ to the connection
$(\mathcal{M}_i,\nabla _i)$ on a small disk $D_i$ around $p_i$ is {\it unique}, since the
connection  $(\mathcal{M}_i,\nabla _i)$ on $D_i$ and its restriction to 
$D_i\setminus \{p_i\}$ have the same group of automorphisms, namely the elements 
in ${\rm GL}(m,\mathbb{C})$ commuting with the topological monodromy. \\
(3) Theorem 1.7 is the {\it weak solution} of the Riemann--Hilbert problem for differential modules with any type of singularities. }\hfill $\square$ \end{observations}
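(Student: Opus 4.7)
The plan is to address the three observations separately. Item (3) is a matter of terminology: Theorem 1.7 matches the classical weak Riemann--Hilbert statement (prescribe isomorphy types at singular points plus monodromy) augmented by links to handle irregularity, so no proof is needed beyond a pointer to Theorem 1.7.

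For Observation (1), the key point is to separate the lattice-dependent vector bundle structure from the lattice-independent differential module structure. In the construction of Theorem 1.7, the lattice $\Lambda_i$ at $p_i$ enters only through the extension of $K_{p_i}\otimes M$ to a holomorphic connection $(\mathcal{M}_i,\nabla_i)$ on a disk $D_i$. If I replace $\Lambda_i$ by another invariant lattice $\Lambda_i'$ in the same $\widehat{K}_{p_i}\otimes M$, the completions satisfy $\widehat{K}_{p_i}\otimes \Lambda_i = \widehat{K}_{p_i}\otimes \Lambda_i'$, and the two local connections have the same generic fibre. Since the links depend only on the multisummation maps out of $V(p_i)$, which are intrinsic to the analytic data and not to the choice of lattice, the global gluing yields a connection whose generic fibre $M$ is independent of the $\Lambda_i$. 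The statement that the change of lattice corresponds to an elementary transformation in the sense of \cite{IIS1} is a local observation: the natural identification of generic fibres extends to an isomorphism $\mathcal{M}_i|_{D_i\setminus\{p_i\}}\cong \mathcal{M}_i'|_{D_i\setminus\{p_i\}}$, so the two disk bundles differ by a Hecke modification supported at $p_i$, which is exactly the elementary transformation appearing in \cite{IIS1}, Section 3.

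For Observation (2), I assume every $p_i$ is regular singular and pick the standard lattice at each $p_i$. The claim to prove is that the gluing of $(\mathcal{M}_0,\nabla_0)$ with $(\mathcal{M}_i,\nabla_i)$ over $D_i\setminus\{p_i\}$ is unique, i.e.\ the restriction map
\[ \mathrm{Aut}(\mathcal{M}_i,\nabla_i|_{D_i}) \longrightarrow \mathrm{Aut}(\mathcal{M}_i,\nabla_i|_{D_i\setminus\{p_i\}}) \]
is a bijection. Any automorphism on the punctured disk is, after trivializing horizontal sections in a sector, an element of $\mathrm{GL}(m,\mathbb{C})$ commuting with the topological monodromy. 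Conversely, any such element extends across $p_i$ because the standard lattice is stable under the centralizer of the formal monodromy, and for regular singularities formal and topological monodromy coincide. Thus both automorphism groups equal this centralizer, and the gluing ambiguity vanishes; combining local-to-global, the monodromy representation alone recovers $M$, reproducing Proposition 1.4 without links.

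The main conceptual hurdle is in (1): one must be careful to distinguish the coherent sheaf $\mathcal{M}$, which genuinely changes under the lattice replacement, from its generic fibre viewed as a $K$-differential module, which does not. Once this dichotomy is articulated cleanly, both (1) and the ``another way'' assertion of (2) reduce to the existence and uniqueness statements already contained in the proof of Theorem 1.7, together with the elementary observation about centralizers of monodromy in the regular singular case.
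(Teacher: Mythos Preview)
Your proposal is correct and follows essentially the same approach as the paper, which presents these observations as self-explanatory remarks rather than giving a separate proof. Your treatment of (2) is in fact slightly more detailed than the paper's: where the paper simply asserts that both automorphism groups coincide with the centralizer of the topological monodromy, you explain why (via the non-resonance condition on the standard lattice, the centralizers of $A$ and of $e^{2\pi iA}$ agree, so any constant $C$ commuting with the monodromy extends across $p_i$).
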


\begin{definition and examples} The strong Riemann-Hilbert problem.\\ 
{\rm
This problem can be formulated as follows:\\
{\it Let  $M$ be the weak solution for the given formal and analytic data.}\\
{\it  Does there exists a connection $(\mathcal{M},\nabla )$ with generic fibre $M$  and {\rm free} vector bundle $\mathcal{M}$ such that 
 $\nabla :\mathcal{M}\rightarrow \Omega (\sum _p(r^+(p)+1)[p])\otimes \mathcal{M}$?} \\
In the above, the sum $\sum _p$ is taken over the singular points $p$ of $M$, $r(p)$ is the Katz invariant
of $\widehat{K}_p\otimes M$ and $r^+(p)$ is the smallest integer $\geq r(p)$. \\

One observes that in case that all the singularities are regular singular (this means that $r(p)=0$ for every singular point $p$) the above is the classical strong form of the Riemann--Hilbert problem.

In the proof of Theorem 1.7 one can choose at each singular point an invariant lattice
which exists according to Definition and examples 1.2. One arrives at a connection
$(\mathcal{M},\nabla )$ which satisfies all conditions with the exception that  the vector
bundle $\mathcal{M}$ is possibly not free. Under the assumptions of Theorem 1.11,
the invariant lattices can be changed to obtain a free vector bundle. Now we give two
families of examples, closely related to the Painlev\'e equations, where the strong 
Riemann--Hilbert problem has a {\it negative} answer.  \\

\noindent (1) {\it The differential module $M$ is given by the matrix differential equation 
$\frac{d}{dz}+  {0\ f \choose 1\ 0 }$, where $f\in \mathbb{C}[z]$ has degree 3.
For $M$ there is no solution for the strong Riemann--Hilbert problem}.
\begin{proof}
 The only singular point $\infty$ of $M$ has Katz invariant $r=5/2$ and $r^+=3$. {\it Suppose that} $M$ can be represented by a connection $(\mathcal{V},\nabla )$ with 
$\mathcal{V}$ free and $\nabla :\mathcal{V}\rightarrow \Omega (4[\infty ])\otimes \mathcal{V}$. Write $V=H^0(\mathcal{V})$. Then 
$\nabla : V\rightarrow H^0(\Omega (4[\infty ]))\otimes V$ and $\partial :=
\nabla _{\frac{d}{dz}}$ has, with respect to a basis of $V$, the form 
$\frac{d}{dz}+B$ where $B$ is a polynomial matrix of degree $\leq 2$.

For computational convenience we may suppose that $f=f_3z^3+f_1z+f_0$ with $f_3\neq 0$. There exists $A\in {\rm GL}_2(\mathbb{C}(z))$ with  
$A^{-1}(\frac{d}{dz}+{0\ f\choose 1\ 0 })A=\frac{d}{dz}+B$. One easily verifies that
$A\in {\rm GL}_2(\mathbb{C}[z])$ and we may assume that $A$ has determinant 1.
We use the notation ${a\ b \choose c\ d }^t={d\ -b\choose -c \ a }$. Write 
$A=A_0+A_1z+\cdots +A_sz^s$ with constant matrices $A_i$ and $A_s\neq 0$.
Then $A^{-1}=A_0^t+\cdots +A_s^tz^s$ and 
$A^{-1}A'+A^{-1} {0\ f\choose 1\ 0 }A=B$ has degree $\leq 2$. 

Suppose first that $s\geq 2$. We compute the coefficients of $z$-powers in the
expression $A^{-1}A'+A^{-1} {0\ f\choose 1\ 0 }A$.
The coefficient of $z^{2s+3}$ is $A_s^t{0\ f_3\choose 0\ 0}A_s$ is zero and thus
$A_s$ has the form ${*\ *\choose 0 \ 0 }$. The coefficient of $z^{2s+2}$ is then also
zero. The coefficient $A_{s-1}^t{0\ f_3\choose 0 \ 0 }A_{s-1}$ of $z^{2s+1}$ is zero and then $A_{s-1}$ has the form ${*\ *\choose 0\ 0 }$. The coefficient of $z^{2s}$ yields that
$A_s^t{0\ 0\choose 1 \ 0 }A_s=0$. This implies $A_s=0$ in contradiction with the assumption.

In the case $s=1$ one finds again $A_1^t{0\ f_3\choose 0 \ 0 }A_1=0$ 
and observes then that the term 
$(f_3z^3+f_1z)A^{-1} {0\ 1 \choose 0\ 0 }A$ has degree $\leq 2$. This implies that
the term is 0, contradicting that $A$ is invertible. \end{proof}

\noindent {\it Comments}.\\ 
(a) The scalar equation  $(\frac{d}{dz})^2-f$, obtained from $M$ by using
the first basis vector as cyclic vector, has only $\infty$ as singularity, i.e., there is no 
{\it apparent singularity} (see \ref{ss:apparent}). \\
(b) In the above negative answer  for the strong Riemann--Hilbert problem
one can replace $f$ by any polynomial of odd degree $\geq 3$.\\ 
(c) Consider a differential module $M$ of dimension two which has only $\infty$ as singular point
and with $r(\infty )=5/2$ and $\Lambda ^2M\cong {\bf 1}$. Suppose that $M$ can be
represented by a connection $(\mathcal{V},\nabla )$ with $\mathcal{V}\cong
O\oplus O(-2)$. Write more explicitly $\mathcal{V}=Oe_1\oplus O(-2[\infty ])e_2$.
Then $\partial :=\nabla _{\frac{d}{dz}}$ has the form 
$\frac{d}{dz}+{0\ f \choose 1\ 0 }$. One computes that the invariant lattices at
$\infty$ have generators $\{z^n e_1,z^{n-1}e_2 \} $ or $\{z^ne_1, z^{n-2}e_2\}$ over
$\mathbb{C}[[z^{-1}]]$ (with any $n\in \mathbb{Z}$). 
 The connections $(\mathcal{V},\nabla )$  with $\nabla :\mathcal{V}\rightarrow \Omega (4[\infty ])\otimes \mathcal{V}$
representing $M$ are of two types, namely $\mathcal{V}\cong O(n)\oplus O(n-1)$
and $\mathcal{V}\cong O(n)\oplus O(n-2)$.\\
(d) Consider again of differential module $M$ of rank two, $\Lambda ^2M\cong {\bf 1}$,
only $\infty$ as singular point and $r(\infty )=5/2$. Suppose now that the strong
Riemann-Hilbert problem has a {\it positive answer} for $M$. Then $M$ can be represented by a matrix differential equation of the form 
$\frac{d}{dz}+A_0+A_1z+{0\ 1\choose 0\ 0 }z^2$. Using the invariant lattices at
$\infty$ one finds that the vector bundles of the connections $\nabla :\mathcal{V}\rightarrow   \Omega (4[\infty ])\otimes \mathcal{V}$, representing $M$, are of two
types namely isomorphic to $O(n)\oplus O(n)$ or $O(n)\oplus O(n-1)$ (for any
$n\in \mathbb{Z}$. Further $M$ has a cyclic vector (essentially unique) which produces a scalar differential equation with precisely one apparent singularity 
(see \ref{ss:apparent}).\\

\noindent (2) {\it Let $M$ be a 2-dimensional differential module over $\mathbb{C}(z)$
with $\Lambda ^2M={\bf 1}$, $r(0)=r(\infty )=1/2$ and no singularities $\neq 0,\infty$.
Suppose that there exists a connection $\nabla :\mathcal{V}\rightarrow \Omega (2[0]+2[\infty ])\otimes \mathcal{V}$ with generic fibre $M$ and 
$\mathcal{V}\cong O\oplus O(-2)$. Then $M$ can be represented by a matrix differential
equation of the form
\[z\frac{d}{dz}+\left(\begin{array}{cc}0& c_{-1}z^{-1}+c_0+c_1z\\
                                                         1& m \end{array}\right) \mbox{ with }  c_{-1}\neq 0\neq c_1\mbox{ and } m\in \mathbb{Z}.\]
  In particular, the strong Riemann--Hilbert problem has a positive solution for $M$. 
  The {\em special phenomenon} is that the scalar equation, associated to this matrix
  differential equation w.r.t. the first basis vector reads  
  $\delta (\delta -m)-(c_{-1}z^{-1}+c_0+c_1z)$, with $\delta :=z\frac{d}{dz}$, and
  therefore has } no apparent singularities!
                                                      
\begin{proof} Identify $\mathcal{V}$ with $Oe_1\oplus O(-2[\infty ])e_2$. The operator
$\delta :=\nabla _{z\frac{d}{dz}}$ satisfies \\
$\delta e_1\in (\mathbb{C}z^{-1}+\mathbb{C}+\mathbb{C}z)e_1+\mathbb{C}z^{-1}e_2$
and \\
$\delta e_2\in (\mathbb{C}z^{-1}+\mathbb{C}+\mathbb{C}z+\mathbb{C}z^2+\mathbb{C}z^3)e_1+(\mathbb{C}z^{-1}+\mathbb{C}+\mathbb{C}z)e_2$.\\
Since the module is irreducible, we can change $e_2$ into $\lambda e_2+(*+*z+*z^2)e_1$ with suitable $\lambda \in \mathbb{C}^*,\ *\in \mathbb{C}$ and obtain $\delta e_1=z^{-1}e_2$. The condition $\Lambda ^2M={\bf 1}$ implies that
$\delta e_2=(a_{-1}z^{-1}+a_0+a_1z+a_2z^2+a_3z^3)e_1+me_2$ with $m\in \mathbb{Z}$. Conjugation of the corresponding matrix differential equation with ${1\ 0\choose 0\ z }$
yields the matrix differential equation 
$z\frac{d}{dz}+\left(\begin{array}{cc}0& a_{-1}z^{-2}+a_0z^{-1}+a_1+a_2z+a_3z^2\\
                                                         1& m-1\end{array}\right)$. The assumptions
$r(0)=r(\infty )=1/2$ imply $a_{-1}=a_3=0$ and $a_0\neq 0\neq a_2$. This is the required form. The formula for the scalar equation is obvious. \end{proof}  }  \end{definition and examples}

\begin{theorem} Suppose that the differential module $M$ over $K=\mathbb{C}(z)$ is irreducible and
has a {\rm (}regular or irregular{\rm )} singularity  which is unramified.  Then the strong Riemann-Hilbert problem has a solution  for $M$.
\end{theorem}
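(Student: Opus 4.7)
The plan is to start from the weak solution furnished by Theorem 1.7, and to modify the invariant lattice at the unramified singular point $p_0$ until the underlying vector bundle becomes free, while preserving the optimal pole bound at every singular point.

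First, for each singular point $p$ choose an invariant lattice $\Lambda_p\subset \widehat{K}_p\otimes M$ (for instance the standard lattice of Definition and examples 1.2). By the construction in the proof of Theorem 1.7 and Observations 1.8, these choices determine a connection $(\mathcal{V},\nabla)$ on $\mathbb{P}^1$ with generic fibre $M$ satisfying $\nabla:\mathcal{V}\to \Omega(\sum_p(r^+(p)+1)[p])\otimes \mathcal{V}$. By Grothendieck's theorem, $\mathcal{V}\cong \bigoplus_{i=1}^m \mathcal{O}(d_i)$ with $d_1\geq\cdots\geq d_m$. The task is to show that by varying only the lattice at $p_0$, we can reduce to the case $d_1=\cdots=d_m$, after which a global twist yields a free $\mathcal{V}$.

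Next, I exploit the rich family of invariant lattices at $p_0$. By Examples 1.2(i)--(ii), at an unramified singular point there is a basis $e_1,\dots,e_m$ of $\widehat{K}_{p_0}\otimes M$ adapted to the formal data such that $\bigoplus_i \mathbb{C}[[t_{p_0}]]\,t_{p_0}^{n_i}e_i$ is an invariant lattice for every $(n_i)\in \mathbb{Z}^m$ (with a mild monotonicity restriction in the resonant regular singular case). Shifting $n_i\mapsto n_i+1$ performs an elementary transformation at $p_0$ along the direction $e_i$: the new sheaf agrees with $\mathcal{V}$ off $p_0$ and has one generator replaced at $p_0$; this lowers the degree of one summand of $\mathcal{V}$ by one and raises another by one. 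Because the new lattice is still invariant in the sense of Definition 1.2, the pole bound $(r^+(p_0)+1)[p_0]$ is preserved, and all other singular points are untouched.

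The crux is an inductive descent: if $d_1>d_m$, then some such transformation at $p_0$ strictly decreases the spread $d_1-d_m$. Consider the maximal summand $\mathcal{O}(d_1)\subset \mathcal{V}$ with generic fibre $L\subset M$. If $L$ were $\nabla$-stable it would yield a rank one differential submodule, contradicting the irreducibility of $M$; hence the composition
\[
\mathcal{O}(d_1)\hookrightarrow \mathcal{V}\xrightarrow{\nabla} \Omega(\textstyle\sum_p (r^+(p)+1)[p])\otimes \mathcal{V}\twoheadrightarrow \Omega(\cdots)\otimes \mathcal{O}(d_j)
\]
is non-zero for some $j$ with $d_j<d_1$. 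After aligning the basis $e_1,\dots,e_m$ at $p_0$ with the Grothendieck filtration, this non-vanishing identifies indices $i,k$ along which shifting $n_i$ down and $n_k$ up is the desired improving transformation. Iterating produces $d_1=\cdots=d_m$, and a final global twist yields the free vector bundle with the required pole bound.

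The main obstacle is this inductive step: certifying that at every stage some \emph{admissible} shift of $(n_i)$ at $p_0$ really decreases $d_1-d_m$. This requires a careful compatibility between the local basis at $p_0$ furnished by Examples 1.2 and the global Grothendieck decomposition, handling the resonant regular singular subcase in which the monotonicity restriction $n_1\geq n_2\geq\cdots$ limits the available shifts, and checking in each case that the candidate new lattice is again invariant. The unramified hypothesis is essential precisely here: by Examples 1.2(iii), at a ramified singular point only two invariant lattice classes are available modulo global twist, so this elementary-transformation mechanism breaks down—consistent with the explicit ramified counterexamples in Definition and examples 1.10.
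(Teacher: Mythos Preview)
Your overall strategy is the same as the paper's: start from the weak solution, keep all lattices fixed except at the unramified point $p_0$, and modify $\Lambda_{p_0}$ repeatedly until the defect of $\mathcal{V}$ drops to zero. But the proof as written has a genuine gap at exactly the place you yourself flag as ``the main obstacle''.

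The problem is the ``alignment'' move. You assert that one can take the formal basis $e_1,\dots,e_m$ of $\widehat{K}_{p_0}\otimes M$ (the one for which all shifts $\bigoplus_i\mathbb{C}[[t_{p_0}]]t_{p_0}^{n_i}e_i$ are invariant) and simultaneously have it respect the global Grothendieck filtration, so that shifting $n_i$ for the index matching $\mathcal{O}(d_1)$ actually lowers $d_1$. This is not automatic: the Grothendieck splitting is global data with no a priori relation to the local formal decomposition, and an elementary transformation along an $e_i$ that mixes several $\mathcal{O}(d_j)$'s can fail to reduce the spread. Moreover, your blanket claim that \emph{every} $(n_i)\in\mathbb{Z}^m$ gives an invariant lattice is already false beyond rank two once there are nontrivial Jordan blocks; the ``mild monotonicity restriction'' you mention is precisely where the difficulty hides, and you do not show it is compatible with the shift you need.

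The paper resolves this by proving the irregular analogue of Lemma~6.20 of \cite{PS}: for \emph{any} $N>1$ there is a lattice at $p_0$ with a basis in which $t_{p_0}\nabla$ is diagonal modulo $t_{p_0}^N$ (the off-diagonal entries lie in $t_{p_0}^N\mathbb{C}[[t_{p_0}]]$). This ``almost-diagonal up to arbitrary order'' statement is exactly the missing alignment tool: with $N$ large relative to the current defect, one-step shifts of individual $n_i$ remain invariant \emph{and} interact with the global splitting in the controlled way needed by the defect-reduction algorithm of Theorem~6.22 in \cite{PS}. Irreducibility enters not through your $\mathcal{O}(d_1)$-is-not-stable argument, but via Proposition~6.21 of \cite{PS}, which bounds the defect a priori and guarantees the iteration terminates. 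So the missing ingredient in your write-up is precisely this local lemma; once you supply it (reducing to an indecomposable summand with a single eigenvalue and a single Jordan block, where the regular-singular proof applies verbatim), the rest of your sketch becomes the cited argument from \cite{PS}.
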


\begin{proof} We will adapt the proof of Theorem 6.22, \cite{PS} to the present more general situation.
As shown in the proof of Theorem 1.7, there exists a connection $(\mathcal{M},\nabla )$ such that 
$\nabla :\mathcal{M}\rightarrow  \Omega (\sum (k_i+1)[p_i])\otimes \mathcal{M}$, where the $p_i$ are
the singular points of $M$ and $k_i$ is the smallest integer $\geq $ the Katz invariant at $p_i$. The irreducibility of $M$ implies that the 
defect of any $\mathcal{M}$ is bounded by a number only depending on $M$, see
Proposition 6.21, \cite{PS}. 

Let $p=p_1$ be an unramified singular point. 
Then we want to prove the equivalent of Lemma 6.20, \cite{PS}, namely for any integer $N>1$, there 
exists a lattice $\Lambda$ for $\widehat{K}_p\otimes M$ such that $\Lambda$ has a basis 
$e_1,\dots ,e_m$ with the property  $t_p\cdot \nabla e_i=dt_p\otimes ((c_i+a_i)e_i+\sum _{i\neq j}a_{i,j}e_j),$
with $c_i\in \mathbb{C}$; the $a_i\in t_p^{-1}\mathbb{C}[t_p^{-1}]$ belong to the set of the eigenvalues of $M$ at $p$ and $a_{i,j}\in t_p^N\mathbb{C}[[t_p]]$.

It suffices to show the above for an indecomposable direct summand of $\widehat{K}_p\otimes M$. This direct summand has only one eigenvalue and the formal monodromy $\gamma$ has only one Jordan block. Then the  proof of Lemma 6.20, \cite{PS}, yields the required lattice for this indecomposable direct
summand.

 Finally, as in the proof of Theorem 6.22, \cite{PS}, one can change the lattice $\Lambda$ step by step
 to obtain a connection $(\mathcal{M},\nabla )$ where $\mathcal{M}$ has defect 0. Taking the tensor
 product with $O(k[p_1])$ for a suitable $k$ makes $\mathcal{M}$ free. \end{proof}

\section{Families of differential modules }

\subsection{Good families and the monodromy space $\mathcal{R}$}

The aim is to study the formal and analytic data of a {\it family} of differential modules $M(\underline{u})$
depending on some parameters $\underline{u}$. Of course, this notion has to be made explicit. A rough approximation would be a matrix differential equation 
$\frac{d}{dz}+A(z,\underline{u})$ where each entry of the $m\times m$-matrix
$A(z,\underline{u})$ is a rational function in $z$ with coefficients depending analytically
on the parameters $\underline{u}$. We recall that for a point $p$, the local parameter
is $t_p$ (equal to $z-p$ or $z^{-1}$). Further we use the notation of Subsection 1.2:
$V=V(p)$ for the formal solution space at $p$, the eigenvalues are $q_*$ and correspond to subspaces $V_{q_*}$ of $V$. The singular directions at $p$ are defined
in Subsection 1.3. 

In order to have meaningful analytic data (as functions of $\underline{u}$) one  
has to make some  assumptions. A {\it good family} is defined by the properties:
\begin{enumerate}
\item[(1)] The number $r$ of the singular points is fixed. The position of these  points
$\{p_1,\dots ,p_r\}$ may vary, but only slightly.
\item[(2)] For every singular point $p$, the degrees in $t_p^{-1}$ of the eigenvalues $q_i$ and the degrees in $t_p^{-1}$ of the differences $q_i-q_j,\ i\neq j$ is fixed.
\item[(3)] For every singular point $p$ and every eigenvalue $q_i$, the dimension of the space $V_{q_i}$ is fixed.
\item[(4)] The top coefficients $c_{i,j}$ of the $q_i-q_j$ may vary in a 
certain restricted way. Namely, we {\it impose} that any singular 
direction for the singular point $p$ is a singular direction for a 
unique difference $q_i-q_j,\ i\neq j$ of the eigenvalues at $p$ and that
these singular directions vary slightly. As a consequence, the order 
of the directions at $p$ does not change in the family. 
\end{enumerate}

\begin{comments} $\ $ {\rm \\ 
(a) From (1) it follows that one can take a base point $b$ and loops
 $\alpha _1,\dots ,\alpha _r$ around the singular points, valid for 
all $M(\underline{u})$. From a point $p_i^*$ on $\alpha _i$ and close 
to $p_i$, the direction of the line segment $[p_i^*,p_i]$ is non 
singular and lies between the `same' singular directions for the family 
$M(\underline{u})$. This follows from (2), (3) and (4). Suppose that 
the family $M(\underline{u})$ satisfies (1)--(3), and that
for $M(\underline{0})$ every singular direction of each singular point 
belongs to a unique difference $q_i-q_j,\ i\neq j$ of eigenvalues. Then
condition (4) is valid for the restriction of this family to a suitable
neighbourhood  of $\underline{u}=\underline{0}$. \\
(b) Let a differential module $M$ over $K=\mathbb{C}(z)$ be given
 and assume that every  singular direction of each singular point belongs to a unique 
difference of eigenvalues.  {\it We sketch the proof of the 
statement that there exists a local analytic family $M(\underline{u})$,
satisfying (1)--(4), such that $M(\underline{0})=M$}. 

Write $M$ as a matrix differential equation 
$\frac{d}{dz}+\sum _{i=1}^r(\sum _{n=1}^{k_i}\frac{A(i,n)}{(z-p_i)^n})$,
with constant matrices $A(i,n)$. Here, the singular points  
$p_1,\dots ,p_r$ are for notational convenience different from $\infty$ (and thus $\sum _{i=1}^rA(i,1)=0$). We do not impose a condition on $k_i$ in relation with the Katz
invariant at the point $p_i$.  One considers the family 
\[\frac{d}{dz}+A(z,\underline{v}):=\frac{d}{dz}+
\sum _{i=1}^r(\sum _{n=1}^{k_i}\frac{A(i,n)+V(i,n)}{(z-p_i-v_i)^n})\ ,\]
 where the $V(i,n)$ are matrices of indeterminates  and the $v_i$
are also indeterminates. Let $\underline{v}$ denote the collection of
 all indeterminates. We consider this family in a small enough 
polydisk $D$ around $\underline{0}\in \mathbb{C}^N$. The conditions 
(1)--(3) on $\frac{d}{dz}+A(z,\underline{v})$ define a Zariski closed subset $Z\subset 
\mathbb{C}^N$. The subfamily $\frac{d}{dz}+A(z,\underline{u})$ with
$\underline{u}$ belonging to the locally closed set $U=D\cap Z\neq \emptyset$ satisfies
conditions (1)--(3). Further condition (4) is satisfied for this subfamily since $D$ small enough.  

A priori, 
$\underline{0}$ is a singular point of $U$. If needed, one can, by 
resolution of singularities, return to the case that $U$ is a small 
open polydisk around the point $\underline{0}$.\\
(c)  The statement in (b)  justifies the naive way of writing a family, satisfying (1)--(4), locally as  $\frac{d}{dz}+A(z,\underline{u})$.  As mentioned in the introduction, the theory
of Okamoto--Painlev\'e pairs has the aim to improve on this. In this paper however, we will deal with the naive local situation. }\hfill $\square$ \end{comments}

Let $M(\underline{u})$ be a good family of dimension $m$ for $\underline{u}$, close to $\underline{0}$. According to Definition 1.6, the {\it formal data} of the family are the position of the singular points  $\{p_j\}_{j=1}^r$ and the eigenvalues $q$ at the singular point $p_j$. Now we describe the items which do not vary in the family:
\begin{itemize}
\item[(a)] $V(j)$, the formal solution space at $p_j$.
\item[(b)] The direct sum decomposition $V(j)=\oplus _{i\in I_j}V(j,i)$, given by the eigenvalues.
\item[(c)] The dimension of the spaces $V(j,i)$. 
\item[(d)] The permutation $\tau _j$ of the $V(j,i)$, satisfying $\dim V(j,\tau _ji)=\dim V(j,i)$, induced by the action of $\gamma$ on the eigenvalues.
\item[(e)] The order of the singular directions for any $p_j$. This yields
a sequence of Stokes maps $\{ St_k(j) \}_{k=1}^{n_j}$.
\item[(f)]  Each $St_k(j)$ has the form $1+R_k$, with $R_k:= i_t    \circ M(j,k) \circ  pr_s$ with prescribed $s,t\in I_j,\ s\neq t$ (depending on $k$) and $pr_s$ is the projection $V(j)\rightarrow V(j,s)$
(with kernel $\oplus _{h\neq s}V(j,h)$) and $i_t:V(j,t)\rightarrow V(j)$ is the canonical
injection. Moreover,  $M(j,k):V(j,s)\rightarrow V(j,t)$ is a linear map which is not constant
in the family.  
\item[(g)] A vector space $W$ of dimension $m$, representing the solution space $V(b)$ for a given base point $b$.
\end{itemize}

The {\it analytic data} of $M(\underline{u})$ are tuples
 $(\{\gamma _j\}, \{L_j\}, \{St_k(j)\} )$ satisfying:
\begin{enumerate}
\item For each $j$, a map $\gamma _j\in {\rm GL}(V(j))$ with 
$\gamma _j(V(j,i))=V(j,\tau _ji)$ for all $i$.
\item $St_k(j)\subset {\rm GL}(V(j))$ of the form described in (f), determined
by the linear map $M(j,k): V(j,s)\rightarrow V(j,t)$. 
\item Linear bijections  $L_j:W\rightarrow V(j)$ for $j=1,\dots ,r$. 
\item Define $top_j:=\gamma _j\circ St_{n_j}(j)\circ \cdots \circ St_1(j)$.
The data should satisfy the relation
$L_r^{-1}\circ top_r\circ L_r \dots \circ L_1^{-1}\circ top _1 \circ L_1=1.$
\end{enumerate}

Let $AnalyticData$ denote the set of all tuples. This has a natural structure of an
affine variety over $\mathbb{C}$. Two tuples $(\{\gamma _j\}, \{L_j\}, \{St_k(j)\} )$ and 
$(\{\gamma _j'\}, \{L_j'\}, \{St_k(j)'\} )$ are called {\it equivalent}, if there
exists $\sigma _j\in {\rm GL}(V(j)),\ j=1,\dots ,r,\ \sigma \in {\rm GL}(W)$
such that each $\sigma _j$ preserves the direct sum decomposition $\oplus V(j,i)$
and  further $\sigma _j\circ L_j = L_j'\circ \sigma ,\ j=1,\dots ,r$ and 
$\sigma _j\circ \gamma _j=\gamma _j'\circ \sigma _j,\ j=1,\dots ,r$. In other words,
the equivalence relation on $AnalyticData$ is given by the action of the reductive linear
algebraic group $G:={\rm GL}(W)\times \prod _{j,i} {\rm GL}(V(j,i))$.\\
 
The  {\it monodromy space} $\mathcal{R}$  is by definition $Analytic Data / /G$,
the categorical quotient. This is again an affine variety.  In general this quotient is not a geometric one.  In particular, a closed point of $\mathcal{R}$ can correspond to many equivalence classes.  One may use $L_1$ to identify each space $V(1)$ with $W$ to reduce the space $AnalyticData$ and the group $G$ acting on it. \\

The map, which associates to $\underline{u}$ in $D$ (a small polydisk around 
$\underline{0}$) the tuple $(\{\gamma _j\}, \{L_j\}, \{St_k(j)\} )$, is analytic. Indeed, it is rather clear that analytic continuation depends in an analytic way on $\underline{u}$. That the same is valid for multisummation follows from \cite{PS}, Proposition 12.20, p. 314. Thus $D \rightarrow AnalyticData$ is analytic and hence $D\rightarrow \mathcal{R}:=AnalyticData/ / G$ is analytic. The next example illustrates the above for a
relatively simple case.

\begin{example}The monodromy space $\mathcal{R}$ for the local  family 
$M(\underline{u})$ with $M(\underline{0})$ given by the matrix equation 
\[z\frac{d}{dz}+ 
\left(\begin{array}{ccc}  z^{-1} +a_1& 0 & 0\\
                            0& \omega z^{-1}+a_2&0\\ 0&0& \omega ^2z^{-1}+a_3\end{array}\right), \mbox{ where } \omega =e^{2\pi i/3} .\]
{\rm A good choice (compare \cite{PS}, 12.3) for the family $M(\underline{u})$ is 
\begin{footnotesize}
$$
z\frac{d}{dz}+ 
\left(\begin{array}{ccc}  (1+u_1)z^{-1} +a_1+u_2& u_3 & u_4\\
                            u_5& (1+u_6)\omega z^{-1}+a_2+u_7&u_8\\ u_9&u_{10}& (1+u_{11})\omega ^2z^{-1}+a_3+u_{12}\end{array}\right)  .
$$
\end{footnotesize}

The singular points are $z=0,\ \infty$ and $r(0)=1,\ r(\infty)=0$.  
The space $AnalyticData$ consist of the formal monodromy and six Stokes matrices at $0$, the link between $0$ and $\infty$ and the formal (=topological) monodromy at $\infty$.  This link and the topological monodromy at $\infty$ are determined by the data at $0$ up to an automorphism of the solution space at $\infty$. 

The eigenvalues 
at 
$z=0, \underline{u}=\underline{0}$ are $q_1=z^{-1},\ q_2=\omega z^{-1},\ q_3=\omega ^2z^{-1}$. 
\par\noindent
The order of the six singular directions in $\mathbb{R}/2\pi \mathbb{Z}$  is given by the differences $q_1-q_2, q_1-q_3, q_2-q_3, q_2-q_1, q_3-q_1, q_3-q_2$.  The topological monodromy $top _0$ at $z=0$ is then the following product of matrices
\begin{footnotesize}
\[\left(\begin{array}{ccc} \alpha _1 &0 &0 \\0 &\alpha _2 &0  \\ 0 &0 & \alpha _3\end{array}\right)\cdot
\left(\begin{array}{ccc}1 &0 &0 \\ 0 &1 & c_6 \\ 0 &0  &1 \end{array}\right)\cdot
 \left(\begin{array}{ccc} 1 &0  & c_5 \\ 0 & 1 & 0 \\ 0 & 0& 1\end{array}\right)\cdot 
\left(\begin{array}{ccc} 1 & c_4 &0 \\ 0& 1& 0\\  0& 0& 1\end{array}\right) \] \[
\cdot \left(\begin{array}{ccc}  1& 0& 0 \\ 0 &1 & 0 \\ 0 & c_3 &1 \end{array}\right)\cdot
\left(\begin{array}{ccc} 1 &0 &0 \\ 0 & 1& 0 \\ c_2& 0& 1\end{array}\right)\cdot 
\left(\begin{array}{ccc}  1&0 &0 \\ c_1 &1 & 0 \\ 0& 0& 1\end{array}\right) .\]
\end{footnotesize}
The entries $(\alpha _1,\alpha _2,\alpha _3)$ of the first matrix, the formal monodromy, are 
\[ (\alpha _1,\alpha _2,\alpha _3)=(e^{2\pi i(a_1+u_2) },e^{2\pi i(a_2+u_7) },e^{2\pi i (a_3+u_{12}) }).\]

 The
$c_1,\dots ,c_6$ are analytic functions of $\underline{u}$, produced by multisummation
(in this case just Borel summation). 
The topological monodromy $top_\infty$ at $\infty$ is
conjugated to $top_0$.
 Under the condition that $a_i-a_j\not \in \mathbb{Z}$ for $i\neq j$, one has that $top_\infty$ is equal to $e^{2\pi iA}$ with
 \begin{footnotesize}
\[ A= 
\left(\begin{array}{ccc}  a_1+u_2& u_3 & u_4\\
                            u_5& a_2+u_7&u_8\\ u_9&u_{10}& a_3+u_{12}\end{array}\right)  .\]
\end{footnotesize}

The group 
\[G:=\{ \left(\begin{array}{ccc} t_1&0&0\\ 0&t_2&0\\ 0&0& t_3\end{array}\right) \ |\
t_1t_2t_3=1\} \subset {\rm GL}(V(0))\]
acts, by conjugation, on the data $(\alpha _1,\alpha _2,\alpha _3,c_1,\dots ,c_6)$. 
Thus $\mathcal{R}$ is the affine space with coordinate ring 
$\mathbb{C}[\alpha _1,\alpha _1^{-1},\dots ,
\alpha _3,\alpha _3^{-1},c_1,\dots ,c_6]^G$. A computation shows that this ring
is $\mathbb{C}[\alpha _1,\alpha _1^{-1},\dots ,\alpha _3,\alpha _3^{-1},x_{14},x_{25},
x_{36}, x_{135},x_{246}]$ where the only relation is $x_{135}x_{246}-x_{14}x_{25}x_{36}
=0$. Here $x_{14}=c_1c_4,\ x_{25}=c_2c_5,\ x_{135}=c_1c_3c_5$ et cetera. It is natural to see the eigenvalues
$(\alpha _1,\alpha _2 ,\alpha _3)$ of the formal  monodromy as a parameter space  $\mathcal{P}$. The fibers of $\mathcal{R}\rightarrow \mathcal{P}$ are isomorphic to the 4-dimensional affine space with coordinate ring  $\mathbb{C}[ x_{14},x_{25}, x_{36}, x_{135},x_{246}]$ with relation  $x_{135}x_{246}-x_{14}x_{25}x_{36}=0$. The singular locus of this space has three components, they are the image of the locus where the
differential equation is reducible $\{\underline{u}|\ M(\underline{u})\mbox{ is reducible}\}$.

The group $G$ also acts on the local family $M(\underline{u})$ and we
obtain a local Riemann--Hilbert morphism 
$RH:\{\underline{u}\in \mathbb{C}^{12}| \ \|\underline{u}\| <\epsilon \}// G\rightarrow \mathcal{R}$. This map does not depend on 
the coefficients $(1+u_1), (1+u_6)\omega ,(1+u_{11})\omega ^2$ of $q_1,q_2,q_3$.
The fibres of $RH$ are, by definition, the isomonodromic families. They are parametrized
by the three variables $t_1=:u_1,\ t_2:=u_6,\ t_3:=u_{11}$. Using  
$z\mapsto  \lambda z$, one may normalize to $(1+u_{11})=1$ and thus the isomonodromic family is parametrized by $t_1,t_2$.  One expects that a
suitable expression in the other $u_i$ satisfies a Painlev\'e type of partial differential
equations w.r.t. the variables $t_1,t_2$.  In fact it is possible to convert the situation into 
a one variable case for  PVI (cf. \cite{Boalch}). }\hfill $\square$ \end{example}

\begin{remarks} The papers of M.~Jimbo, T.~Miwa and K.~Ueno.\\
{\rm The above introduction of families of differential modules and their formal and analytic data can be
seen as an extension of the papers \cite{JMU}, \cite{JM}, which we will describe now, using our terminology.

 In \cite{JMU}, \cite{JM} the base point $b$ is taken to be $\infty$ and this point is supposed to
be (irregular) singular. Further the irregular singularities $p$ are of a simple kind, namely all
the eigenvalues (generalized exponents) $q_i$ are  in $t_p^{-1}\mathbb{C}[t_p^{-1}]$, and all
$q_i$ and $q_i-q_j$ for $i\neq j$ have the same degree in $t_p^{-1}$ (there is one exception, related to the Painlev\'e I equation).  We note that Example 2.2 is of the type considered in \cite{JMU}.  In particular, Borel summation or, better, $k$-summation is sufficient  for the asymptotic analysis of the singularity. 
A theorem of Y.~Sibuya \cite{Sib} gives the required
input from asymptotics.  The `links', that we defined, are present in their work and the family of linear differential  equations is presented as a matrix differential equation $\frac{d}{dz}+A(z,\underline{u})$. 
The origin of the examples, in the appendix of \cite{JM}, of families related to Painlev\'e I--VI, is probably classical (discovered by R.~Fuchs \cite{F}, P.~Painlev\'e \cite{P}, R.~Garnier \cite{Garnier} et al.). Another source for similar examples are the ones discovered by H.~Flaschka and A.C.~Newell \cite{FN}.  Later work of  B.~Malgrange \cite{Mal1, Mal2}, clarifies and extends the papers \cite{JMU}, \cite{JM}. 

\smallskip
The new tool `multisummation' and the precise construction of the Stokes matrices, enables to generalize the work of 
Jimbo, Miwa and Ueno. Especially, as we will show in the next subsection, a `complete' list of the equations related to 
Painlev\'e I--VI can be derived. Further, the monodromy spaces
$\mathcal{R}$ for the analytic data can now be computed 
and studied  in detail. }\hfill $\square$ \end{remarks}

\subsection{Finding the list. Tables for connections and $\mathcal{R}$}

 We consider a local family $M(\underline{u})$ of differential modules, represented by a
  matrix equation  $\frac{d}{dz}+A(z,\underline{u})$ where $A(z,\underline{u})$  is a
   $2\times 2$-matrix 
 with trace 0 and $\underline{u}$ lies in a small polydisk $D$ around $\underline{0}$. 
 The possibilities of the formal structure at the singular points is given in Examples 1.1. 
  The local Riemann--Hilbert map  $RH:D\rightarrow  \mathcal{R}$ forgets the formal
   data, namely the position of the set of singular points $S$ and the coefficients of
    the eigenvalues at the irregular singular points. 

The position of the points $S$ contributes  $\max (-3+\#S,0)$ to the dimension of the fibre, because of the automorphisms of $\mathbb{P}^1$. A singular point $p$
with Katz invariant $r(p)$ contributes to the fibre the dimension $r(p)$ if $r(p)\in \mathbb{Z}_{\geq 0}$ and $r^+(p)=\frac{1}{2}+r(p)$ if $r(p)\in \frac{1}{2}+\mathbb{Z}_{\geq 0}$. 
Further, in the space $D$ one divides by the action of a subgroup of the automorphisms of $\mathbb{P}^1$.\\
{\it The requirement  that the fibres of $RH$ have dimension 1 produces the list}:\\
 $\# S>4$ is excluded.  \\
 $\#S=4$, then  $S=\{0,1,\infty ,t\}$, only regular singular points, i.e., all $r(p)=0$. \\
 $\#S=3$, then $S=\{0,1,\infty \}$ and only one irregular point with $r(p)\in \{1,\frac{1}{2}\}$. 
 $\#S=2$, then $S=\{0,\infty \}$, the contribution of the singular points to the dimension of the fibre is 2, 
 since we divide by the group $z\mapsto a z$.\\
$\#S=1$, then $S=\{\infty \}$, the contribution of the singular point to the dimension of the fibre is 3, since we divide by the group
 $z\mapsto az+b$.\\

Columns 3--6 of the next table present the ten resulting cases. In the second 
column one finds the classification of the related Painlev\'e equation (some classes 
are divided into subclasses). The first column gives the extended Dynkin diagram of the corresponding Okamoto--Painlev\'e pair (see the Introduction). The space $\mathcal{R}$ is mapped to a space of parameters $\mathcal{P}$ (related to the parameters spaces of the  Painlev\'e equations) consisting of traces or eigenvalues of the 
matrices involved in the construction of $\mathcal{R}$.

\begin{table}[h]

\begin{center}
\begin{tabular}{|c|c||c|c|c|c|c|c|} \hline 
Dynkin &Painlev\'e equation & $r(0)$ & $r(1)$ & $r(\infty)$ & $r(t)$ & 
$\dim \mathcal{P}$ \\ \hline 
 $\tilde{D}_4$ & PVI &  0 & 0 & 0 & 0 & 4 \\ \hline 
$\tilde{D}_5$ & PV & 0 & 0 & 1 & - & 3 \\ \hline 
$\tilde{D}_6$  & ${\rm PV}_{\rm deg}$= PIII(D6) & 0 & 0 & 1/2 & - & 2  \\ \hline 
$\tilde{D}_6$ & PIII(D6) & 1 & - &  1 & - & 2  \\ \hline  
$\tilde{D}_7$  & PIII(D7) & 1/2& - & 1 & - & 1 \\ \hline 
$\tilde{D}_8$  & PIII(D8) & 1/2 & - & 1/2 & - & 0  \\ \hline  
$\tilde{E}_6$ & PIV     & 0 & - & 2 & - & 2 \\ \hline
$\tilde{E}_7$ & PII & 0 & - & 3/2 & - & 1\\ \hline  
$\tilde{E}_7$ & PII & - & - & 3 & - & 1 \\ \hline 
$\tilde{E}_8$ & PI  &- & - & 5/2 & -& 0  \\ \hline 
\end{tabular}
\end{center}
\vspace{0.5cm}
\caption{Classification of Families}
\label{tab:dynkin}
\end{table}

We will not number these ten families, but indicate them by their  Katz invariants,
e.g., $(0,0,0,0),\ (0,0,1),\dots ,\ (0,-,3/2 ),\ (-,-,3),\ (-,-,5/2)$. For a differential module $M$ corresponding to one of the types, the strong Hilbert-Riemann problem has a positive answer, except possibly for $(1/2,-,1/2)$ and $(-,-,5/2)$ (see Definitions and examples 1.10). For these two types we only consider the modules $M(\underline{u})$ for which the strong Riemann--Hilbert problem does have a positive answer.  The strong Riemann-Hilbert problem for a family $M(\underline{u})$ is more subtle. It seems that connections on the vector bundle  $O\oplus O(-1)$ is better adapted to families. For
the Painlev\'e VI case this is type of vector bundle is considered in \cite{IIS1,IIS2}.
Here however we will represent  a family $M(\underline{u})$ by connections on $O\oplus O$. This defines, in general, an affine Zariski open subset of the space of all
connections. However, the monodromy space $\mathcal{R}$ classifies the analytic
data (modulo some equivalence) for the complete space of all connections.
For each type there are many possibilities. We will make choices that are helpful for the computation of the Painlev\'e equations and are moreover close to classical formulas. 

 It turns out that $\mathcal{R}\rightarrow \mathcal{P}$ is a {\it family of affine cubic surfaces}. There are two sources for the singularities of the fibres. The first 
one is {\it reducibility} of systems and is connected with the singularities of $\mathcal{R}$ itself. The other source is  {\it resonance}, i.e., at least one of the matrices involved  in the construction of $\mathcal{R}$ has a difference of eigenvalues belonging to $\mathbb{Z}\setminus \{0\}$. Section 3 provides the computations of the families  $\mathcal{R}\rightarrow \mathcal{P}$.    We will also describe the corresponding one-dimensional families of 
differential modules $M(t)$. This subsection ends with a list indicating the families of connections and presenting the families $\mathcal{R}\rightarrow \mathcal{P}$ of affine cubic surfaces by an equation. The monodromy space $\mathcal{R}$ for
$(0,0,0,0)$ is classical(cf.\cite{FK65,Iw1}), the others seem to be new.

\bigskip
\noindent 
{\bf (0,0,0,0)}.\  \ PVI. \hfill $\frac{d}{dz}+\frac{A_0}{z}+\frac{A_1}{z-1}+\frac{A_t}{z-t}, \mbox{ all } tr(A_*)=0. $\\ 

\noindent 
$ x_1 x_2 x_3 + x_1^2 + x_2^2 + x_3^2 - s_1 x_1 - s_2 x_2- s_3 x_3 +  s_4=0, $ with \\
$s_i  =  a_i a_4 + a_j a_k , \quad (i, j, k) = \mbox{a cyclic permutation of  }  (1, 2, 3),
$\\ 
$s_4  =  a_1a_2 a_3 a_4 + a_1^2 + a_2^2 + a_3^2 + a_4^2 - 4 
\mbox{ with } a_1, a_2, a_3, a_4 \in \mathbb{C}.$
\bigskip 

\noindent  {\bf (0,0,1)}. \ \  {PV}. \hfill  $\frac{d}{dz} +\frac{A_0}{z}+\frac{A_1}{z-1}+
t/2\cdot {-1\ \  0\choose 0 \ 1 }  , \mbox{ all } tr(A_*)=0.$\\

\noindent 
$x_1x_2x_3+x_1^2+x_2^2-(s_1+s_2s_3)x_1-(s_2+s_1s_3)x_2-s_3x_3+s_3^2+
s_1s_2s_3+1=0$ with $s_1,s_2\in \mathbb{C},\ s_3\in \mathbb{C}^*$.

\bigskip
\noindent {\bf (0,0,1/2)}.\ \ {${\rm PV}_{\rm deg}$}. \hfill $ \frac{d}{dz}+\frac{A_0}{z}+\frac{A_1}{z-1}+
{0\ t^2 \choose 0 \ \ 0},
\mbox{ all }tr (A_*)=0.  $\\

\noindent 
  $x_1 x_2 x_3+x_1^2+x_2^2 +s_0 x_1+s_1 x_2+1=0$ with $s_0, s_1 \in \mathbb{C}$. 

\bigskip
 \noindent {\bf (1,-,1)}.\ \  {PIII(D6)}. \hfill  $z\frac{d}{dz}+A_0z^{-1}+A_1+{\frac{t}{2}\ \ 0\choose 0\ -\frac{t}{2} }z,\mbox{ all }
 tr(A_*)=0.  $\\
 
 \noindent 
  $ x_1x_2x_3+x_1^2+x_2^2+(1+\alpha \beta )x_1+(\alpha +\beta )x_2
  +\alpha \beta =0$ with $\alpha ,\beta \in \mathbb{C}^*$.

\bigskip
 \noindent {\bf (1/2,-,1)}.\ \  {PIII(D7)}. \hfill $z\frac{d}{dz}+ A_0z^{-1}+A_1+{\frac{t}{2} \ 0\choose 0 \ -\frac{t}{2} }z,
 \mbox{ all } tr(A_*)=0. $\\ 
 
 \noindent 
 $x_1x_2x_3+x_1^2+x_2^2+\alpha x_1+ x_2 =0$ with $\alpha \in \mathbb{C}^*$.

\bigskip

 \noindent  {\bf (1/2,-,1/2)}.\ \  {PIII(D8)}. \hfill $z\frac{d}{dz}+{0 \ 0\choose -q \ 0} z^{-1} +
 {\frac{p}{q}\ -\frac{t}{q} \choose 1\ -\frac{p}{q} }+{0\ 1 \choose 0 \ 0 }z.  $\\

   \noindent $x_1x_2x_3+x_1^2-x_2^2-1  =0$. 
   
   \bigskip
   
 \noindent {\bf (0,-,2)}.\ \ {PIV}. \hfill $z\frac{d}{dz}+A_0+A_1z+{1\ \ 0\choose 0\ -1 }z^2.   $\\
  
  \noindent 
  $x_1x_2x_3+x_1^2-(s_2^2+s_1s_2)x_1-s_2^2x_2-s_2^2x_3+s_2^2+s_1s_2^3=0 $ 
with $s_1\in \mathbb{C},\ s_2\in \mathbb{C}^*$.

\bigskip

  \noindent {\bf (0,-,3/2)}.\ \  {PIIFN}. \hfill $z\frac{d}{dz}+A_0+{0\ t+q \choose 1\ \ \ 0}z+{0\ 1\choose 0\ 0 }z^2  $\\
  
  \noindent $x_1x_2x_3+x_1-x_2 + x_3 +s=0 $, with $s \in \mathbb{C}$. 
  
  \bigskip
  
 \noindent {\bf (-,-,3)}.\ \  {PII}. \hfill $\frac{d}{dz}+A_0+A_1z+{1\ \ 0\choose 0\ -1 }z^2,
  \mbox{ all } tr(A_*)=0.  $\\  

\noindent  $x_1x_2x_3-x_1 -\alpha x_2- x_3+\alpha +1 =0$ with $\alpha \in \mathbb{C}^*$.
\bigskip

\noindent {\bf (-,-,5/2)}.\ \  {PI}. \hfill $\frac{d}{dz}+{p\ t+q^2\choose -q\ \ \ -p }+{0\ q\choose 1\ 0 }z
+{0\ 1\choose 0\ 0 }z^2.   $\\ 

\noindent $x_1x_2x_3+x_1+x_2+1=0$.\\
\vspace{0.5cm}
 
 Table of the equations of  the monodromy spaces for the 10 families.\\ 


\section{Computation of the monodromy spaces}

\subsection{Family $(0,0,0,0)$ and Painlev\'e PVI} 
For completeness we describe this classical family. The family of 
differential modules is represented by 
$\frac{d}{dz}+A(z,t):=\frac{d}{dz}+\frac{A_0}{z}+\frac{A_1}{z-1}+\frac{A_t}{z-t}$ with 
constant $2\times 2$ matrices having trace 0. Dividing by  the action,
by conjugation, of ${\rm PSL}_2$ one finds a moduli space $\mathcal{M}$
(say the categorical quotient) of differential modules with dimension 
$7$. 

The monodromy data are given by the tuples 
$(M_1,M_2,M_3,M_4)\in {\rm SL}_2^4$ satisfying $M_1\cdots M_4=1$. This 
defines an affine space of dimension 9. The categorical quotient 
$\mathcal{R}$ of this space under the action, by conjugation with 
${\rm PSL}_2$, has dimension 6. The fibres of 
$RH:\mathcal{M}\rightarrow \mathcal{R}$ are  parametrized by
$t\in \mathbb{P}^1\setminus \{0,1,\infty \}$.

The coordinate ring of $\mathcal{R}$ is generated over $\mathbb{C}$ by 
  $x_1,x_2,x_3,a_1,a_2,a_3,a_4$ with $a_i=tr(B_i)$ and 
$x_1=tr(B_2B_3),\ x_2=tr(B_1B_3),\ x_3=tr(B_1B_2)$. There is only one 
relation (\cite{FK65, Iw2}), namely (as in the list)
 \[ x_1x_2x_3+x_1^2+x_2^2+x_3^2-s_1x_1-s _2x_2
-s_3x_3+s_4=0\ .\]
 The morphism $\mathcal{R}\rightarrow \mathcal{P}:=\mathbb{C}^4$, given by 
$(x_1,\dots ,a_4)\mapsto (s _1,\dots ,s _4)$, is a family of
 affine cubic surfaces with `three lines at infinity'. For information concerning the singularities of $\mathcal{R}$ and of the fibres  we refer to [\cite{Iw1}, \cite{Iw2}, \cite{IISA}].

\subsection{Family $(0,0,1)$ and Painlev\'e  PV}

For a differential module  of type $(0,0,1)$, the strong Riemann-Hilbert problem has
a positive answer. Indeed, the lattices at $0$ and $1$ can be chosen arbitrary. By tradition one supposes that the corresponding local exponents are $\pm \theta _0/2$
and $\pm \theta _1/2$. From Definition and examples 1.2 one concludes that there exists a unique lattice 
at $\infty$ leading to a free vector bundle. By tradition, the generalized local exponents
at $\infty$ are $\pm (t z+\theta _\infty)/2$. The module is then represented by the
matrix differential equation $\frac{d}{dz}+\frac{A_0}{z}+\frac{A_1}{z-1}+A_\infty$, for certain constant matrices  $A_0,A_1,A_\infty$ with trace 0.
  $A_\infty$ is normalized by $A_\infty =t/2\cdot {-1\  0\choose 0\ \ 1}$ with 
$t\in \mathbb{C}^*$. Further $-\theta _i^2/4$ is the determinant of $A_i$ for $i=0,1$ and, $\theta _\infty$
is the $(1,1)$ entry of $A_0+A_1$.

\subsubsection{The moduli space $\mathcal{R}$ for the analytic data}
 The symbolic solution space $V$ at $\infty$ is written as $V_q\oplus V_{-q}$. Let $e_1, e_2$ be basis
 vectors for $V_q$ and $V_{-q}$. Starting at $\infty$ one makes loops around $0$ and $1$, producing 
 monodromy matrices $M_1,M_2$, with respect to the basis $\{e_1,e_2\}$. Let $M_\infty$ be the topological
 monodromy at $\infty$, then we have the relation $M_1M_2M_\infty =1$.
Further $M_\infty ={\alpha \ \ \  0\choose 0\ \ \alpha ^{-1} }{1\ \ 0\choose f_1\ \ 1 }{1\  \ f_2\choose 0\ \ 1 }$,
where the first matrix is the formal monodromy and the others are the two Stokes matrices. In the sequel, we will
eliminate the choice of the basis vectors $e_1$,  $e_2$ of $V_q$ and $V_{-q}$ for the matrix equation.
 One considers the isomorphism  $\mathbb{C}^*\times \mathbb{C}\times \mathbb{C}\rightarrow 
\{ {a\ b \choose c\ d }\in {\rm SL}_2\ | \ a\neq 0\}$, given by 
\[(\alpha , f_1,f_2)\mapsto {\alpha \ \ \ 0\choose 0\ \ \alpha ^{-1} }{1\ \ 0\choose f_1\ \ 1 }{1\ \ f_2\choose 0\ \ 1 }
=
\left(\begin{array}{cc}\alpha   &\alpha f_2  \\ \alpha ^{-1}f_1  &\alpha ^{-1}(1+f_1f_2)  \end{array}\right)\  .\]
One concludes that the matrices $M_j={a_j\  \ b_j\choose c_j\  \ d_j }\in {\rm SL}_2$ for $j=1,2$ determine
$M_\infty$ and that $c_1b_2+d_1d_2=\alpha $ is non zero. Therefore, the pairs $M_1,M_2$ that occur
define an affine variety with coordinate ring  
\[R=\mathbb{C}[a_1,\dots ,d_2,\frac{1}{c_1b_2+d_1d_2}]/(a_1d_1-b_1c_1-1,
a_2d_2-b_2c_2-1)\ .\]
The group $\mathbb{G}_m=\{{c\ 0\choose 0\ 1 }|\ c\in \mathbb{C}^*\}$ acts on $V$ and thus on the matrices
$M_1,M_2$. For this action the weights are: $+1$ for $b_1,b_2$; $-1$ for $c_1,c_2$ and $0$ for
 $a_1,d_1,a_2,d_2$. The subring $R_0$ of $R$, consisting of the invariants under the action of $\mathbb{G}_m$ is the subring
consisting of the elements of weight 0. The moduli space $\mathcal{R}$ for the analytic data is
${\rm Spec}(R_0)$.

For the calculation of $R_0$ we may at first forget the localization
at the degree 0 element $c_1b_2+d_1d_2$. Now, using the two relations,
we find that $R$ has a basis over $\mathbb{C}$, consisting of the monomials
\[ a_1^*a_2^*d_1^*d_2^*b_1^{n_1}b_2^{n_2}c_1^{m_1}c_2^{m_2}
\mbox{ with } n_1m_1=0,\ n_2m_2=0\mbox{ and any integers }  *\geq 0 .\]
It follows that $R_0$ is equal to
$\mathbb{C}[a_1,d_1,a_2,d_2,b_1c_2,b_2c_1,\frac{1}{b_2c_1+d_1d_2}]$, where the
six generators have only one relation namely
$b_1c_2\cdot b_2c_1=(-1+a_1d_1)(-1+a_2d_2)$. The {\it singular locus of $\mathcal{R}$}
is given by the additional equations $0=b_1c_2=b_2c_1=a_1d_1-1=a_2d_2-1$. One observes that
this describes the {\it reducible analytic data}, given by  $b_1=b_2=0$ or $c_1=c_2=0$ (or equivalently
the corresponding reducible differential equations).  The coordinate ring of the singular locus of $\mathcal{R}$ is  $\mathbb{C}[d_1,d_1^{-1},d_2,d_2^{-1}]$.

\bigskip
Introduce new variables $s_1:=a_1+d_1,\ s_2:=a_2+d_2,\ s_3:=b_2c_1+d_1d_2$, i.e., the traces of $M_1,M_2$ and the eigenvalue $\alpha$ of the formal
monodromy at $\infty$ and the new variable $d_3:=b_1c_2+d_1d_2-s_2d_1-s_1d_2+s_1s_2+s_3$. Exchange these variables against $a_1,a_2,b_2c_1$
and $b_1c_2$. Then the ring $R_0$ obtains the form
$R_0=\mathbb{C}[d_1,d_2,d_3,s_1,s_2,s_3,s_3^{-1}]/(R(s_1,s_2,s_3))$, where $R(s_1,s_2,s_3)$ is equal to
\[d_1d_2d_3+d_1^2+d_2^2-(s_1+s_2s_3)d_1-(s_2+s_1s_3)d_2-s_3d_3 +s_3^2+s_1s_2s_3+1 \ .\]
In the sequel we will write $x_i=d_i$ for $i=1,2,3$.
The inclusion 
\[\mathbb{C}[s_1,s_2,s_3,s_3^{-1}]\subset \mathbb{C}[x_1,x_2,x_3,s_1,s_2,s_3,s_3^{-1}]/(R(s_1,s_2,s_3)) \]
induces a surjective morphism
\[ \pi :\mathcal{R}\rightarrow  \mathcal{P}=\mathbb{C}\times \mathbb{C}\times \mathbb{C}^*={\rm Spec}(\mathbb{C}[s_1,s_2,s_3,s_3^{-1}]) \ ,\]
which maps a given tuple  $(M_1,M_2,M_\infty )$ to $(s_1,s_2,s_3)$. Thus $\pi :\mathcal{R}\rightarrow \mathcal{P}$ is a family of affine cubic surfaces with equation $F=0$ with
 \[F=x_1x_2x_3+x_1^2+x_2^2-(s_1+s_2s_3)x_1-(s_2+s_1s_3)x_2-s_3x_3 +s_3^2+s_1s_2s_3+1\ .\]
We note that this equation (or the cubic surface) has a symmetry, given by 
interchanging $(x_1,s_1)$ and $(x_2,s_2)$ (i.e., interchanging $M_1,M_2$).

\subsubsection{The singularities of $\mathcal{R}$ and the fibres of $\mathcal{R}\rightarrow \mathcal{P}$}
The inclusion of the singular locus ${\rm Spec}(\mathbb{C}[x_1,x_1^{-1},x_2,x_2^{-1}])$ of $\mathcal{R}$ into  $\mathcal{R}$ has the explicit form
\[(x_1,x_2)\in (\mathbb{C}^*)^2\mapsto (x_1,x_2, x_1x_2+x_1^{-1}x_2^{-1},x_1+x_1^{-1},
x_2+x_2^{-1},x_1x_2)\in \mathcal{R}(\mathbb{C})\ .\]
The image of the induced morphism  ${\rm Spec}(\mathbb{C}[x_1,x_1^{-1},x_2,x_2^{-1}])\rightarrow \mathcal{P}$
lies in  $\mathcal{P}_{red}:={\rm Spec}(\mathbb{C}[s_1,s_2,s_3,s_3^{-1}]/(R_1))$, where $R_1$ is the irreducible element
$R_1=(s_3 +s_3^{-1})^2-s_1s_2(s_3+s_3^{-1})+s_1^2+s_2^2-4$.  More precisely, since $R_1$ is irreducible, one has  an inclusion 
$\mathbb{C}[s_1,s_2,s_3,s_3^{-1}]/(R_1)\rightarrow \mathbb{C}[x_1,x_1^{-1},x_2,x_2^{-1}]$, given by $s_1\mapsto  x_1+x_1^{-1},\ s_2\mapsto x_2+x_2^{-1}, \ s_3\mapsto x_1x_2 $. This identifies the first ring with the subring $\mathbb{C}[x_1+x_1^{-1},x_2+x_2^{-1}, x_1x_2, x_1^{-1}x_2^{-1}]$ of the second one.
It easily follows that $\mathbb{C}[x_1,x_1^{-1},x_2,x_2^{-1}]$ is the normalization of $\mathbb{C}[s_1,s_2,s_3,s_3^{-1}]/(R_1)$ in its field of fractions.

The singular locus of $\mathcal{P}_{red}$ itself is easily computed to be the union  of two disjoint components given by the ideals $(s_3-1,s_1-s_2)$ and $(s_3+1,s_1+s_2)$.  
The map $\tau :{\rm Spec}(\mathbb{C}[x_1,x_1^{-1},x_2,x_2^{-1}])\rightarrow \mathcal{P}_{red}$ is an isomorphism outside
the singular locus of $\mathcal{P}_{red}$ and further satisfies:

\[\tau ^{-1}(s_1,s_1,1)=\{(\frac{1}{2}  \left(s_1\pm\sqrt{s_1^2-4}\right),  \frac{1}{2}
   \left(s_1 \mp \sqrt{s_1^2-4}\right))\}\]
\[\mbox{ for } s_1\neq \pm 2 \mbox{ and }
\tau ^{-1}(\pm 2,\pm 2,1)=\pm (1,1);\]
\[\tau ^{-1}(s_1,-s_1,-1)=\{(\frac{1}{2}   \left(s_1\pm\sqrt{s_1^2-4}\right),  -\frac{1}{2}
   \left(s_1 \mp \sqrt{s_1^2-4}\right))\}\] 
   \[ \mbox{ for }s_1\neq \pm 2 \mbox{ and }
\tau ^{-1}(\pm 2,\mp 2,-1)=(\pm 1,\mp 1). \]

If for a fixed point $p\in \mathcal{P}$, the fibre $\pi ^{-1}(p)$ has a singular point, then  the ideal
$(\frac{d}{dx_1}F(x_1,x_2,x_3,p),\frac{d}{dx_2}F(x_1,x_2,x_3,p),\frac{d}{dx_3}F(x_1,x_2,x_3,p))$ is not the unit ideal and
it follows that the ideal  $I:=(F,\frac{d}{dx_1}F,\frac{d}{dx_2}F,\frac{d}{dx_3}F) \cap \mathbb{C}[s_1,s_2,s_3,s_3^{-1}]$ 
lies in the maximal ideal of $\mathbb{C}[s_1,s_2,s_3,s_3^{-1}]$ defined by the point $p$. 
Using a Gr\"obner basis one verifies that $I$ is generated by  $(s_1^2-4)(s_2^2-4)R_1(s_1,s_2,s_3)$. Thus singular points in 
$\pi ^{-1}(p)$ occur for $p$ lying on one of the five divisors on $\mathcal{P}$ defined by $s_1=\pm 2,\ s_2=\pm 2, R_1=0$. The first four divisors correspond to resonance for the matrices $M_1,M_2$ and the last one to reducibility.  
 A singular point in $\pi^{-1}(p)$, with $p$ lying on only one of the divisors
has type $A_1$. If $p$ lies on more than one divisor, the singularity type can be different. The following table, of importance
for the comparison with the Okamoto-Painlev\'e pairs, gives the rather complicated structure of the singularities of the fibres (see Table \ref{tab:sing-p4}). 

\small
\begin{table}
\begin{center}
\begin{tabular}{|      c         |         c       |          c      ||  c                                              |     c   |} \hline 
                                     &                 &              &                                                   & \\
 $s_1$                          &     $s_2$    &   $R_1$  & Singular points                                 & Type of the  \\
                                     &                   &               & $(x_1, x_2, x_3)$                     & singularities \\
 \hline 
 \hline
                      $2$         & $\not= \pm 2$ & $\neq 0$ &    $ (1, s_3, s_2)$          & $A_1$ \\
 \hline 
                      $2$         & $2$             & $\neq 0$     &    $(1, s_3, 2), (s_3,1,2) $           & $A_1+A_1 $ \\ 
 \hline 
                      $2$         &        $2$     &      $0$         &    $(1, 1, 2) $                & $A_3$ \\ 
 \hline 
                      $2$         & $-2$           & $\neq 0$      &    $(-s_3, -1, -2), (1,s_3,-2) $       & $A_1+A_1$ \\ 
  \hline 
                      $2$         & $-2$           & $0$              &    $(1, -1, -2) $                & $A_3$ \\ 
 \hline 
                      $2$         & $\not=\pm 2$&  $0$          &   $(1, s_3, s_3+s_3^{-1}) $ & $A_2$ \\
 \hline 
 \hline 
                       $-2$       & $\not= \pm 2$ & $\neq 0$  &  $ (-1, -s_3, -s_2)$          & $A_1$ \\
  \hline 
                       $-2$       &  $2$          & $\neq 0$       &   $(-1, -s_3, -2), (s_3,1,-2)  $        & $A_1+A_1 $ \\ 
 \hline 
                      $-2$       & $2$           & $0$          &     $(-1, -1, -2) $                    & $A_3$ \\
 \hline 
                      $-2$        & $-2$         & $\neq 0$ &  $(-1, -s_3, 2), (-s_3,-1,-2) $                    & $A_1+A_1$ \\ 
\hline 
                       $-2$       & $-2$          & $0$          &        $(-1, -1, 2) $                   & $A_3$ \\ 
\hline 
                       $-2$       &$\not=\pm 2$ & $0$       &   $(-1, -s_3, s_3+s_3^{-1}) $ &  $A_2$ \\ 
\hline 
\hline  
              $\not=\pm 2$ & $2$         & $\neq 0$        &  $(s_3,1,s_)$            &      $A_1 $ \\ 
\hline 
           $\not=\pm 2$ &   $2$        & $0$ &   $(s_3, 1, s_3+s_3^{-1})$  & $A_2 $ \\
 \hline
          $\not=\pm 2$ & $-2$        & $\neq 0$ &   $(-s_3, -1, -s_1)$             & $A_1 $ \\
  \hline 
         $\not=\pm 2$ & $-2$        & $0$      &  $(-s_3, -1, s_3+s_3^{-1})$   & $A_2 $ \\
 \hline
          $\not= \pm 2$ & $\not= \pm 2$ & $0$ &    $(a_1, a_2,  a_3)$ & $A_1$ \\ 
\hline 
 $\not= \pm 2$ & $s_1$ & $0$ &    $(\alpha, \beta, 2), (\beta ,\alpha ,2)$ & $A_1 + A_1$   \\   
 \hline  
          $\not= \pm 2$ & $-s_1$ & $0$ &  $(\alpha, -\beta, 2),(-\beta , \alpha ,2)$ & $A_1 + A_1$ \\   
\hline    
\end{tabular}
\vspace{0.2cm}
\end{center}
\flushleft
{\it This table of the singularities of the fibres} uses the notation \begin{small}
\[(a_1, a_2,  a_3)=(\frac{s_3^2-1}{s_2 s_3 -s_1}, 
\frac{s_3(s_2 s_3 -s_1)}{s_3^2-1}, s_3+s_3^{-1})\mbox{ and } \]\[ \alpha= \frac{1}{2}
   \left(s_1+\sqrt{s_1^2-4}\right), \beta= \frac{1}{2}  \left(s_1-\sqrt{s_1^2-4}\right) \ . \] \end{small}
As usual, the symbol  $A_n,\ n\geq 1$ stands for the surface singularity given by the local equation
$x^2+y^2+z^{n+1}=0$.
\vspace{0.5cm}
\caption{Singularities for the monodomy spaces for PV.}
\label{tab:sing-p4}
\end{table}

\normalsize

\subsection{Family $(0,0,1/2)$ and Painlev\'e  ${\bf  PV}_{\bf deg}$ }

A differential module of this type is irreducible and by Theorem 1.11 can be represented by a matrix differential equation of the form  $\frac{d}{dz}+\frac{A_0}{z}+\frac{A_1}{z-1}+A_\infty$   with $tr(A_0)= tr(A_1)=0$ and $A_\infty$ nilpotent. The generalized eigenvalues at $\infty$ are $\pm t \cdot z^{1/2}$  and $t\in \mathbb{C}^*$. One may normalize by  $A_\infty ={0\ t^2 \choose 0 \ 0 }$.

\smallskip
For the computation of {\it monodromy space} $\mathcal{R}$ we give the solution space $V$ at $\infty$ a basis $e_1,e_2$ such that  $V_q=\mathbb{C}e_1,\ V_{-q}=\mathbb{C}e_2, \gamma e_1=e_2,\ \gamma e_2=-e_1$.  Let $M_0,M_1,M_\infty$ denote the topological monodromies at $0,1,\infty$ on the basis $e_1,e_2$. Then $M_\infty ={0\ -1\choose 1\ \ 0}{1\ 0 \choose e\ 1}$ and one finds $M_0M_1{-e \ -1\choose1\ \ \ 0 }=1$. Changing the basis at $\infty$ does not effect these data. Therefore $\mathcal{R}$ has dimension $3+3+1-3=4$ (for $M_0,M_1,M_\infty$ and the 3 equations). One considers the map  $\mathcal{R}\rightarrow \mathcal{P}:=\mathbb{C}\times \mathbb{C}$ which sends the tuple to $(s_0,s_1):=(tr(M_0),tr(M_1))$.  \smallskip

Write $M_1={a_1 \ b_1 \choose  c_1  \ d_1} $.
 The equation $M_0M_1M_\infty =1$ determines $M_0$ in terms of $M_1,M_\infty$. In particular, $s_0=-c_1+b_1+a_1e$. Thus $\mathcal{R}$ is the space, given by the 5 variables $a_1,b_1,c_1,d_1,e$  and the equation $a_1d_1-b_1c_1=1$. Use $s_0$ and 
 $s_1=a_1+d_1$ to eliminate $c_1$ and  $d_1$. Then the single equation between $b_1,a_1,e,s_0,s_1$ reads $a_1b_1e+a_1^2+b_1^2-a_1s_1-b_1s_0+1=0$. With the choice $x_1=-b_1,\ x_2=-a_1,\ x_3=e$ this equation is 
 \[x_1x_2x_3+x_1^2+x_2^2+s_0x_1+s_1x_2+1=0 \mbox{ and shows that }
 \mathcal{R}\rightarrow \mathcal{P}\] is a family of affine cubic surfaces.   
 We note that there are no reducible cases and that $\mathcal{R}$ is nonsingular.
The singularities of the fibres occur only for the loci $s_0=\pm 2$ and/or $s_1=\pm 2$, corresponding to resonance. The fibres for  $(s_0,s_1)=(\pm 2 ,\neq \pm 2)$ and 
$(s_0,s_1)=(\neq \pm 2,\pm 2)$ contain one singular point and the fibers for
$(s_0,s_1)=(\pm 2,\pm 2)$ contain two singular points. All these surface singularities
are of type $A_1$.  

\subsection{Family $(1,-,1)$ and Painlev\'e PIII(D6) }

Due to the ample choice of invariant lattices at $0$ and at $\infty$, any differential  module of this type can be represented by a matrix differential equation
$z\frac{d}{dz}+A_{0}z^{-1}+A_1+A_2z$. By a transformation $z\mapsto \lambda z$
one arrives at eigenvalues $\pm \frac{t}{2} z^{-1}$ at $0$ and $\pm \frac{t}{2} z$ at $\infty$ with $t\in
\mathbb{C}^*$. Moreover one can normalize such that $A_2=\frac{t}{2}{1\ \ 0\choose 0\ -1 }$.
There are more normalizations possible. 

The affine space $AnalyticData$ is described as follows.\\
The formal solution space $V(0)$ at $0$ is given a basis $e_1,e_2$ such that the
formal monodromy,  the Stokes matrices and the topological monodromy $top_0$ have the form
\[ \left(\begin{array}{ll}   \alpha & 0  \\ 0 & \alpha ^{-1}  \\  
\end{array}\right),\ 
\left(\begin{array}{ll}   1  & 0  \\ a_1 & 1  \\  \end{array}\right), \
\left(\begin{array}{ll}   1  & a_2  \\ 0 & 1  \\  \end{array}\right),\
\left(\begin{array}{ll}   \alpha  & \alpha a_2 \\
 \alpha ^{-1} a_1& \alpha ^{-1}(1+a_1a_2)  \\  \end{array}\right).\]
The last matrix is written as ${m_1\ m_2\choose m_3\ m_4 }$. It is characterized by
$m_1\neq 0,\ m_1m_4-m_2m_3=1$ and it determines $\alpha , a_1, a_2$.  
Moreover, $e_1\wedge e_2$ is a fixed global solution of the second exterior power.

The formal solution space $V(\infty )$ at $\infty$ is provided with a basis
$f_1,f_2$, such that $f_1\wedge f_2$ is again this fixed global solution and the
formal monodromy, the Stokes maps and the topological monodromy $top_\infty$ have the matrices
\[ \left(\begin{array}{ll}   \beta & 0  \\ 0 & \beta ^{-1}  \\
  \end{array}\right),\ 
\left(\begin{array}{ll}   1  & 0  \\ b_1 & 1  \\  \end{array}\right),\ 
\left(\begin{array}{ll}   1  & b_2  \\ 0 & 1  \\  \end{array}\right),\
\left(\begin{array}{ll}   \beta  & \beta b_2 \\
 \beta ^{-1} b_1& \beta ^{-1}(1+b_1b_2)  \\  \end{array}\right).\]

The link $L:V(0)\rightarrow V(\infty )$ satisfies:\\
(i) $top _\infty \circ L=L\circ top_0$, this follows from $M_1M_\infty =1$. \\
(ii) $L$ maps $e_1\wedge e_2$ to $f_1\wedge f_2$. Thus the matrix
 ${\ell _1\ \ell _2\choose \ell _3\ \ell_4 }$ of $L$ w.r.t. the given bases has determinant 1.\\
One uses (i) to forget the data for $\infty$. The coordinate ring for $AnalyticData$ is
the localization of $\mathbb{C}[m_1,\dots ,m_4,\ell_1,\dots ,\ell _4]/(m_1m_4-m_2m_3-1,
\ell_1\ell_4-\ell _2\ell _3-1)$, given by $0\neq \alpha=m_1$ and 
$0\neq \beta =\ell_1\ell_4m_1+\ell_2l_4m_3-\ell_1\ell_3m_2-\ell_2\ell_3m_4$.

The monodromy space $\mathcal{R}$ is obtained by dividing $AnalyticData$ by 
the action of the elements $(\gamma , \delta )\in \mathbb{G}_m\times \mathbb{G}_m$,
which is induced by the base change $e_1,e_2,f_1,f_2\mapsto \gamma e_1,\gamma ^{-1}e_2,\delta f_1,\delta ^{-1}f_2$.

The new matrices are
$\left(\begin{array}{ll}  m_1 & \gamma ^2 m_2  
\\ \gamma ^{-2}m_3 & m_4  \\  \end{array}\right)$
and 
$\left(\begin{array}{ll}\gamma ^{-1}\delta   \ell_1 &\gamma \delta \ell_2  
\\\gamma ^{-1}\delta ^{-1} \ell_3 & 
\gamma \delta ^{-1} \ell_4  \\  \end{array}\right)$.\\

The ring of invariants for the action of $\mathbb{G}_m\times \mathbb{G}_m$ is
computed to be (a localization of)  $\mathbb{C}[m_1,m_4, \ell_1\ell_4,m_2\ell_1\ell_3,m_3\ell_2\ell_4]$. We note that $m_2m_3$ and $\ell_2\ell_3$ are omitted because of the determinant =1 relation. There is only one relation between these five generators  namely (recall $\alpha =m_1$)
\[(m_2\ell_1\ell_3)\cdot (m_3\ell_2\ell_4)+(-\alpha m_4+1)
\cdot (\ell_1\ell_4)\cdot (\ell_1\ell_4-1)=0 .\]
 
Writing  $y_1:=\ell_1\ell_4,\ y_2:=m_2\ell_1\ell_3,\ y_3:=m_3\ell_2\ell_4$ and using the formula for $\beta$ one  obtains the equation and the formula
\[y_2y_3+(-\alpha m_4+1)y_1(y_1-1)=0 \mbox{ and }
 \beta = \alpha y_1+y_3-y_2-(y_1-1)m_4 . \]
Using the formula for $\beta$ one eliminates $y_3$ and obtains the equation
\[y_2 (\beta -\alpha y_1+y_2+(y_1-1)m_4) +(-\alpha m_4+1)y_1(y_1-1)=0 .\]
For fixed $\alpha, \beta$ this is a cubic equation in $y_1,y_2, m_4$. After 
a series of simple transformations, one obtains the following equation for 
$\mathcal{R}\rightarrow \mathcal{P}=\mathbb{C}^*\times \mathbb{C}^*$ 
\[x_1x_2x_3+x_1^2+x_2^2+(1+\alpha \beta )x_1+(\alpha +\beta )x_2+\alpha \beta =0.\]
The discriminant of $\mathcal{R}\rightarrow \mathcal{P}$ has the formula 
$(\alpha -\beta )^2(\alpha \beta -1)^2$ and therefore the fiber above 
$(\alpha ,\beta)$ with $\alpha \neq \beta ,\beta ^{-1}$ is non singular.
The singular locus of $\mathcal{R}$ consists
of the two non intersecting lines
 \[ L_1:\ \ \alpha =\beta,\ (x_1,x_2,x_3)=(0,-\alpha ,\alpha +\alpha ^{-1} ) \mbox{ and }\]
 \[ L_2:\ \ \alpha =\beta ^{-1},\ (x_1,x_2,x_3)=(-1,0,\alpha +\alpha ^{-1}).\]
They correspond to the reducible connections (or equivalently reducible monodromy data).
All the singularities of the fibres are obtained by intersecting with $L_1$ or $L_2$.
The corresponding surface singularities are of type $A_1$. If 
$\alpha \neq \pm 1$ and $\beta=\alpha ^{\pm 1}$, then there is only one singular point
in the fiber. If $\alpha =\beta =\pm 1$, then the fiber has two singular points.

\subsection{Family $(1/2,-,1)$ and Painlev\'e PIII(D7) }

By Theorem 1.11, any differential module of this type is represented by a matrix
differential equation $z\frac{d}{dz}+A_{0}z^{-1} + A_1+A_2 z$. One may normalize
$A_2={\frac{t}{2} \ \ \  0 \choose 0 \ -\frac{t}{2} }$. After a transformation $z\mapsto \lambda z$
one may suppose that the eigenvalues at $0$ are $\pm z^{-1/2}$ and 
$\pm \frac{t}{2} \cdot z$ at $\infty$. Assuming that $A_0$ and $A_2$ have no common eigenvector
leads to the explicit  family 
\[z\frac{d}{dz}+A_0z^{-1}
+A_1  + {\frac{t}{2} \ \ \  0 \choose 0 \ -\frac{t}{2} }z  .\]

For the description of the space $AnalyticData$, the formal solution space $V(0)$ at
$0$ is given the  basis $e_1,e_2$ for which the formal monodromy, the Stokes matrix and topological monodromy $top_0$ have the matrices
\[ {0\ -1\choose 1 \ \ 0},\ {1\  0\choose e\ 1}, \ {-e \ -1 \choose 1\ \ \ 0 }.\]
The formal solution space $V(\infty )$ at $\infty$ is given a basis $f_1,f_2$ for which 
the formal monodromy, the Stokes maps and the topological monodromy $top_\infty$ have
the matrices
\[{\alpha \ \ 0 \choose 0\ \ \alpha ^{-1}},\ {1\ \ \ 0\choose c_1\ 1 },\ 
{1\ c_2\choose 0\ \ \ 1 },\ \left(\begin{array}{cc}\alpha & \alpha c_2\\
\alpha ^{-1}c_1&\alpha ^{-1}(1+c_1c_2)\end{array}\right)\ .\] 
One writes $top_\infty ={a\ b \choose c\ d}$ with $a\neq 0$ and determinant 1.
It is assumed that $e_1\wedge e_2$ and $f_1\wedge f_2$ are the same global solution
of the second exterior power of the differential equation.
The link $L:V(0)\rightarrow V(\infty )$ has therefore a matrix
${\ell_1\ \ell_2\choose \ell_3\ \ell_4 }$ with determinant 1. 
 
 The equation  $top_0\cdot L^{-1}top_\infty L=1$ can be written as 
${a\ b\choose c\ d}= L {0\ \ 1\choose -1\ -e }L^{-1}$. This eliminates
 ${a\ b \choose c\ d }$ (and thus $\alpha ,c_1,c_2$). The coordinate ring of
 $AnalyticData$ is therefore $\mathbb{C}[\ell _1,\dots ,\ell _4,e]/(\ell _1\ell _4-\ell _2\ell _3-1)$. The elements $\mu \in \mathbb{G}_m$ act on $AnalyticData$ by the base change $f_1,\ f_2\mapsto \mu f_1,\ \mu ^{-1}f_2$. The elements  
 $\ell_1,\ell_2,\ell_3,\ell_4$ have weights $-1,-1,1,1$ for this action.

The coordinate ring of $\mathcal{R}$ is generated by the variables
 $e,\ell_{13},\ell_{14}, \ell_{23},\ell_{24}$ where $\ell_{ij}:=\ell_i\ell_j$. There are 
two relations, namely $\ell_{14}-\ell_{23}=1$ and $\ell_{14}\ell_{23}=\ell_{13}\ell_{24}$.
$\mathcal{R}$ has dimension $3$. The map $\mathcal{R}\rightarrow \mathcal{P}=\mathbb{C}^*$ is defined by: an element in $\mathcal{R}$ is mapped to 
$\alpha =-\ell_{24}-\ell_{13}-\ell_{23}e$, one of the eigenvalues of the 
formal monodromy at $\infty$. Eliminate 
$\ell_{14}=\ell_{23}+1$. Then we have the equation 
$(\ell_{23}+1)\ell_{23}+\ell_{13}(\alpha +\ell_{13}+\ell_{23}e)=0$ 
(here $\ell_{24}$ is eliminated). We obtain a family $\mathcal{R}\rightarrow \mathcal{P}=\mathbb{C}^*$ of non singular affine cubic surfaces given by the equation
$ \ell_{13}\ell_{23}e+\ell_{13}^2+\ell_{23}^2+\alpha \ell_{13}+\ell_{23} =0.$

\subsection{Family $(1/2,-,1/2)$ and Painlev\'e PIII(D8)}
 We consider differential modules of this type for which the
 strong Riemann--Hilbert problem has a solution (see Definition and examples 1.10, part (2)).
 Then a matrix differential equation $z\frac{d}{dz}+A_{0}z^{-1}+A_1+
 A_2z$, with nilpotent $A_0$ and $A_2$, represents the module. Further assuming that the eigenvectors of $A_0$ and $A_2$
are distinct one can normalize to an equation of the form (see \ref{ss:p3-d8})
\[z\frac{d}{dz}+{0 \ 0\choose -q \ 0} z^{-1} +
 {\frac{p}{q} \ -\frac{t}{q} \choose 1\ -\frac{p}{q} }+{0\ 1 \choose 0\ 0 }z.  \]

The space $AnalyticData$ is build as follows.
The formal solution space $V(0)$ at $0$ is given a basis $e_1,e_2$, unique up to multiplication by the same constant, such that
$V(0)_{z^{-1/2}}=\mathbb{C}e_1,\ V(0)_{-z^{-1/2}}=\mathbb{C}e_2$, such that the formal monodromy has matrix ${0\ -1\choose 1\ \ 0 }$. There is  one Stokes  matrix ${1\ 0\choose a\ 1}$. The topological monodromy at $0$ is the product, 
i.e., ${-a\ -1 \choose 1\ \ \ 0 }$.\\

At $\infty$, one has similarly a basis $f_1,f_2$ for $V(\infty )$ with 
topological monodromy 
${0\ -1\choose 1\ \ 0 }{1\ 0\choose b\ 1 }={-b\ -1\choose 1\ \ 0 }$.
The matrix of the link $L:V(0)\rightarrow V(\infty )$ with respect to these 
basis satisfies $L:e_1\wedge e_2\mapsto f_1\wedge f_2$, because
we assume, as we may, that $e_1\wedge e_2$ and $f_1\wedge f_2$ are the same global solution of the second exterior power. Thus
$L=:{\ell_1\ \ell_2\choose \ell_3\ \ell_4 }$ has determinant 1. The  identity  \[{\ell_1\ \ell_2\choose \ell_3\ \ell_4 } 
{-a\ -1\choose 1\ \ \ \ 0}={-b\ -1\choose 1\ \ \ \ 0 }
{\ell_1\ \ell_2\choose \ell_3\ \ell_4 } \]
describes the generators and relations of the coordinate ring of
$AnalyticData$. The only admissible bases change is
$e_1,e_2,f_1,f_2\mapsto \lambda e_1,\lambda e_2,\lambda f_1,\lambda f_2$ with $\lambda \in \mathbb{C}^*$ acts trivially on
$AnalyticData$ and thus this space coincides with $\mathcal{R}$.

After elimination of $b,\ell_1,\ell_3$ one is left with the variables 
$a,\ell_2,\ell_4$ and one equation, namely 
$a\ell_2\ell_4+\ell_4^2-\ell_2^2-1=0$, or in other
variables 
\[x_1x_2x_3+x_1^2-x_2^2-1=0\ .\]  This defines a non singular affine cubic surface.

\subsection{Family $(0,-,2)$ and Painlev\'e PIV}

The singularity at $\infty$ of a module of this type guarantees that
the strong Riemann-Hilbert problem has a solution. 
There is a corresponding matrix differential equation which can be normalized to $z\frac{d}{dz}+A_0+A_1z+{1\ \ 0\choose 0\ -1}z^2$
Using the transformation $z\mapsto \lambda z$ one may
suppose that the eigenvalues at $\infty$ are  $\pm (z^2+\frac{t}{2}\cdot z)$. 
The ingredients for $AnalyticData$ are the following.

The symbolic solution space at $\infty$ is written as 
$V_q\oplus V_{-q}$ and one takes a basis $\{e_1\}$ and $\{e_2\}$ for $V_q$ and
$V_{-q}$. With respect to the basis $\{e_1,e_2\}$
the topological monodromy $top_\infty$ at $\infty$ has the form

\[top_\infty =\left(\begin{array}{cc}\alpha &0\\0&\alpha ^{-1} \end{array}\right)
\left(\begin{array}{cc}1&0\\a_1 &1\end{array}\right)
\left(\begin{array}{cc}1&a_2\\0&1 \end{array}\right)
\left(\begin{array}{cc}1&0\\a_3&1 \end{array}\right)
\left(\begin{array}{cc}1&a_4\\0&1\end{array}\right)\ ,\]
where the first matrix is the formal monodromy and the others are the 4 Stokes
matrices. Let $top_0$ denote the monodromy at $0$ written on the basis $e_1,e_2$.The condition $top_0\cdot top_\infty=1$ implies that $top_\infty$ determines $top_0$. The coordinate ring of $AnalyticData$ is
$\mathbb{C}[\alpha, \alpha ^{-1},a_1,\dots ,a_4]$. An element 
$\lambda \in \mathbb{G}_m$ acts on $AnalyticData$ by the base change $e_1,e_2\mapsto \lambda e_1,\lambda ^{-1}e_2$. The weights
of $\alpha ,a_1,a_2,a_3,a_4$ for this action are $0,+1,-1,+1,-1$.
Therefore $\mathcal{R}$  has coordinate ring 
$\mathbb{C}[\alpha ,\alpha ^{-1},a_{12},a_{14},
a_{23},a_{34}]$, where $a_{ij}:=a_ia_j$. There is only one relation namely $a_{12}a_{34}-a_{14}a_{23}=0$.\\

The {\it singular points} of $\mathcal{R}$ are
given by the equations $a_{12}=a_{14}=a_{23}=a_{34}=0$. This coincides with the locus where the monodromy data (or equivalently
the differential modules) are reducible  (namely $a_1=a_3=0$
or $a_2=a_4=0$).\\

The morphism $\mathcal{R}\rightarrow \mathcal{P}:=\mathbb{C}\times 
\mathbb{C}^*$, where $\mathcal{P}$ is a space of parameters, is  given by
$(\alpha ,a_1,\dots ,a_4)\mapsto (tr (top_1),\alpha )$. Now
$tr(top_1)=tr(top_\infty )$ and 
\[tr(top_\infty )=\alpha (1+a_{23})+\alpha ^{-1}(a_{14}+a_{34}+a_{12}
+a_{12}a_{34}+1)\ .\]

Write $s_2=\alpha$ and $s_1=tr (top_\infty )$ and exchange  
$a_{23}$ with $s_1$ by the formula
\[a_{23}=s_2^{-1}s_1-s_2^{-2}(a_{14}+a_{34}+a_{12}+a_{12}a_{34}+1)-1\ .\]
Then the coordinate ring of $\mathcal{R}$ has the form
$\mathbb{C}[s_1,s_2,s_2^{-1},a_{12},a_{14},a_{34}]$ and there is one relation,
namely
\[a_{12}a_{14}a_{34}+(s_2^2a_{12}a_{34}+a_{14}^2+a_{14}a_{12}+a_{14}a_{34})+
a_{14}(1+s_2^2-s_1s_2)=0\ . \]

One makes the following substitutions 
\[a_{14}=x_1-s_2^2,\ a_{12}=x_2-1,\ a_{34}=x_3-1 \mbox{ and the relation $R$
  reads}\]
\[x_1x_2x_3+x_1^2-(s_2^2+s_1s_2)x_1-s_2^2x_2-s_2^2x_3+s_2^2+s_1s_2^3 =0 \  ,\]
and  thus $\mathcal{R}\rightarrow \mathcal{P}= \mathbb{C}\times \mathbb{C}^*$ is a family of affine cubic surfaces.

\subsubsection{Singular loci of $\mathcal{R}$
 and the fibres of $\mathcal{R}\rightarrow \mathcal{P}$}
We have already remarked that the  singular points of $\mathcal{R}$ correspond to
reducibility  and are given by 
$x_1=s_2^2,\ x_2=1,\ x_3=1,\ s_1=s_2+s_2^{-1}$. \\
For a fixed $s=(s_1,s_2)\in \mathcal{P}$, the singular locus of the fibre 
is given by the (relative) Jacobian ideal, generated by  
$R, \partial R/\partial x_1, \partial R/\partial x_2, \partial R/\partial x_3 $. A Gr\"obner
basis for this ideal produces the following results.

The fiber  has singular points  if and only if $s$ satisfies the equation
\[ \Delta(s):=(s_1-2)(s_1+2)(s_2^2-s_1 s_2+1)=0.   \]
We define three divisors of $\mathcal{P}$ 
by $D_1^{\pm} = \{ s_1 = \pm 2 \}, D_{red} =\{ s_2^2-s_1 s_2+1=0 \}$.  We have seen that $D_{red}$ corresponds to the locus of the {\it reducible differential equations}. 
Further $s_1=e^{\pi i\theta _0}+e^{-\pi i\theta _0}$, where $\pm \theta _0/2$ are the local exponents at $z=0$ of the differential equation. Thus $s_1=\pm 2$ corresponds to the  {\it resonant case} $\theta _0\in \mathbb{Z}$. The table gives the singularities and their type of the fibres.   
\begin{center}
\begin{tabular}{|c|c|c|c|} \hline 
  
 &  $s=(s_1, s_2)$ & $(x_1, x_2, x_3)$ &  Type \\ 
  \hline   
$D_1^{+}$ not $D_{red}$  & $(2, s_2), s_2 \not=1$ & $(s_2, s_2, s_2)$ 
& $A_1$ \\ \hline 
$D_1^{-}$ not $D_{red}$&$(-2, s_2), s_2 \not= -1$ & $(-s_2, -s_2, -s_2)$ 
& $A_1$ \\ \hline
 $D_{red}$  not $D_1^{\pm}$ &$(s_2 +s_2^{-1}, s_2), s_2 \not=\pm 1$ 
 & $(s_2^2, 1, 1)$ 
& $A_1$ \\ \hline
$D_1^{+} \cap D_{red} $  & $(2, 1)$ & $(1, 1, 1)$ & $A_2$ \\ 
\hline 
 $D_1^{-} \cap D_{red} $  &   $(-2, -1)$ & $(1, 1, 1)$ & $A_2$ \\ 
\hline 
\end{tabular}
\end{center}

\subsection{Family $(0,-,3/2)$ and Painlev\'e PIIFN }

A module of this type can be represented, by Theorem 1.11, by
a matrix differential equation $z\frac{d}{dz}+A_0+A_1z+A_2A_2$
with $A_2$ nilpotent. One can use the transformation $z\mapsto \lambda z$ and choose a basis such that the explicit form is
$z\frac{d}{dz}+{a\ \ c\choose -b\ -a }+{0\ t+b\choose 1\ \ \ 0 }z+
{0\ 1\choose 0\ 0 }z^2$. The eigenvalues at $\infty$ are
$\pm (z^{3/2}+\frac{t}{2}\cdot z^{1/2})$. 

 The space $AnalyticData$ is formed as follows. The formal solution
 space $V$ at $\infty$ is given a basis $e_1,e_2$ such that the formal
 monodromy and the three Stokes maps have the matrices
 \[ {0\ -1\choose 1\ \  \ 0 }, \ {1\ \ 0\choose a_1\ 0 },\  {1\ a_2\choose 0\ \ 1 },\  {1\ \ 0\choose a_3\ 1}.\ \]
 The topological monodromy $top_\infty$ is the product of these matrices and $top_0$ is the inverse of $top_\infty$. Further,
 the base change $e_1,e_2\mapsto \lambda e_1,\lambda e_2$
 does not effect the matrices. It follows that the coordinate ring of 
 $\mathcal{R}$ is $\mathbb{C}[a_1,a_2 ,a_3]$. One computes that
 the trace $s$ of the topological monodromy at $0$ is
 $s=-a_1a_2a_3-a_1+a_2-a_3$. The map $\mathcal{R}\rightarrow 
 \mathcal{P}$ is given by $(a_1,a_2,a_3)\mapsto s$.
 Thus $\mathcal{R}\rightarrow \mathcal{P}$ is a family of affine cubic
 surfaces given by the equation $a_1a_2a_3+a_1-a_2+a_3+s=0$.

The {\it singularities of the fibres}  occur only for the resonant case 
$\theta _0\in \mathbb{Z}$, where $\pm \theta _0/2$ are the local exponents at $z=0$. Since $s=e^{\pi i\theta _0}+e^{-\pi i\theta _0}$, 
this corresponds to $s=\pm 2$. For $s=2$ one finds one singular point $(a_1,a_2,a_3)=(-1,1,-1)$ and for $s=-2$ one singular point
$(a_1,a_2,a_3)=(1,-1,1)$. The type of the singularity is $A_1$ in both cases.

\subsection{Family $(-,-,3)$ and Painlev\'e PII}

{\it The family of connections}.
A differential module of this type can be represented by a matrix differential equation $\frac{d}{dz}+A_0+A_1z+A_2z^2$, because of 
the singularity at $\infty$. Using a transformation $z\mapsto \lambda z+
\mu$ and by choosing a suitable basis one arrives at the explicit form,
 having eigenvalues $\pm (z^3+\frac{t}{2}\cdot z)$ at $\infty$, namely
 \[\frac{d}{dz}+\left(\begin{array}{cc}a_{10}+z^2& a_{21}z+a_{20}\\ 
a_{31}z+a_{30}&
-a_{10}-z^2\end{array}\right)\mbox{ and } t=a_{10}+a_{21}a_{31}/2. \]
The group $\mathbb{G}_m$ acts by conjugation, in fact by 
$a_{21}z+a_{20}\mapsto \lambda (a_{21}z+a_{20})$ and 
$a_{31}z+a_{30}\mapsto \lambda ^{-1}(a_{31}z+a_{30})$. In general,
this cannot be used to normalize the equation even further. (See Subsection \ref{ss:p2-e7}). \\

The space $AnalyticData$ consists of the formal monodromy and
six Stokes matrices.  The formal solution space $V$ at $\infty$ is given a basis  $e_1, e_2$  such that the formal monodromy and the six Stokes maps have the matrices
\[ \left(\begin{array}{cc}\alpha &0\\ 0&\alpha ^{-1}\end{array}\right),\
\left(\begin{array}{cc}1 &0\\ b_1&1\end{array}\right),\
\left(\begin{array}{cc}1 &b_2\\ 0&1\end{array}\right), \cdots ,
\left(\begin{array}{cc}1&b_6\\ 0&1\end{array}\right). \]
The product of all of them is the topological monodromy at $\infty$
and hence is equal to ${1\ 0\choose 0\ 1 }$. The coordinate ring of
$AnalyticData$ is therefore generated by $\alpha ,\alpha ^{-1},b_1,\dots ,b_6$ and the matrix identity defines the ideal of the relations
$I\subset \mathbb{C}[\alpha ,\alpha ^{-1},b_1,\dots ,b_6]$.
The basis $e_1,e_2$ is unique up to the action of the elements $\lambda \in \mathbb{G}_m$, given by $e_1, e_2\mapsto \lambda e_1,
\lambda ^{-1}e_2$.

Call the  six Stokes matrices $M_1,\dots ,M_6$. Then $M_3M_4M_5M_6$ is equal to 
\[\left(\begin{array}{cc}\alpha ^{-1}(1+b_1b_2)&-\alpha b_2\\
-\alpha ^{-1}b_1&\alpha \end{array}\right)\mbox{
 which is the inverse of } \left(\begin{array}{cc}\alpha& 0\\ 0& \alpha ^{-1}
\end{array}\right) M_1M_2.\]
 We note that the product of the three matrices
determines $\alpha ,b_1,b_2$. Further one computes that 
$\alpha =b_3b_6+(1+b_3b_4)(1+b_5b_6)$. Thus the coordinate ring of $AnalyticData$ is $\mathbb{C}[b_3,b_4,b_5,b_6,\frac{1}{b_3b_6+(1+b_3b_4)(1+b_5b_6)}]$.
For the group $\mathbb{G}_m$ the variables $b_1,b_3,b_5$ have weight $-1$,
the variables $b_2,b_4,b_6$ have weight $+1$ and $\alpha$ has weight $0$. Write $b_{ij}:=b_ib_j$ for $i<j$. Then the coordinate ring of $\mathcal{R}$ is the ring of the $\mathbb{G}_m$-invariant elements and this is 
\[\mathbb{C}[b_{34},b_{36},b_{45},b_{56},\frac{1}{b_{36}+(1+b_{34})
(1+b_{56})}]\ .\]
 There is only one relation, namely $b_{34}b_{56}=b_{36}b_{45}$. We use the 
identity $\alpha =b_{36}+(1+b_{34})(1+b_{56})$ to exchange $b_{36}$ with
$\alpha$. Then the coordinate ring of $\mathcal{R}$ has the form 
$\mathbb{C}[\alpha ,\alpha ^{-1},b_{34},b_{45},b_{56} ]$ and there is only
one relation now. Define $x_1=b_{34}+1=tr(M_3M_4)-1, \ 
x_2=b_{45}+1=tr(M_4M_5)-1,\ x_3=b_{56}+1=tr(M_5M_6)-1$.
Then this relation reads  $x_1x_2x_3-x_1-\alpha x_2-x_3+  \alpha +1=0$  and defines a family $\mathcal{R}\rightarrow \mathcal{P}=
\mathbb{C}^*$ of cubic surfaces.

The locus in the affine space $AnalyticData$ of reducible data has two components. The first one is given by
$\alpha =1,\ b_1=b_3=b_5=0$ and the second one by $\alpha =1,\ b_2=b_4=b_6=0$.
These loci are mapped to the unique singular point 
$\alpha =1,\ x_1=x_2=x_3=1$ of $\mathcal{R}$. 

For $\alpha \neq 1$, the affine cubic surface, given by the above equation, has no singularities. The infinite part of the cubic surface consists of three lines. The three intersection points of these lines are the infinite  singularities. The cubic surface for $\alpha =1$ has one extra singular point, namely $x_1=x_2=x_3=1$. (This cubic surface is the Cayley surface). The type of the surface singularities  is $A_1$.

\subsection{Family $(-,-,5/2)$ and Painlev\'e  PI}

According to Definition and examples 1.10, a differential module of this type need not have a solution for the strong Riemann-Hilbert problem.
We deal here with the modules for which there is a solution, i.e., are
represented by a matrix differential equation 
$\frac{d}{dz}+A_0+A_1z+A_2z^2$ with nilpotent $A_2$ which can be normalized into  ${0\ 1\choose 0\ 0 }$. The map
$z\mapsto \lambda z+\mu$ is used to normalize the eigenvalues at
$\infty$ to $\pm (z^{5/2}+\frac{t}{2}\cdot z^{1/2})$.  Conjugation with a constant
matrix of the form ${1\ *\choose0\ 1 }$ leads to the normalization
\[\frac{d}{dz}+\left(\begin{array}{cc}p & t+q^2\\ -q&-p\end{array}\right)+
\left(\begin{array}{cc}0&q\\1&0\end{array}\right)z
+\left(\begin{array}{cc}0&1\\0&0\end{array}\right)z^2\ .\]  

The space $AnalyticData$ is given by the formal monodromy and 5 Stokes maps which are on a basis $e_1,e_2 $ of the formal solution 
 space at $\infty$ given by the matrices
 \[{0\ -1\choose 1\ \ \ 0 },\  {1\ 0\choose a_1\ 1}, \
{1\ a_2 \choose 0\ 1},\ {1\ 0 \choose a_3\ 1},\
{1\ a_4\choose 0\ 1 },\ {1\ 0 \choose a_5\ 1}.\]
 Their product is the topological monodromy and thus equal to
${1\ 0\choose 0\ 1}$. The base change $e_1,e_2\mapsto \lambda e_1,
\lambda e_2$ does not effect these matrices. Hence the coordinate
ring of $\mathcal{R}$ is generated by $a_1,\dots ,a_5$ and their relations are given by the above matrix identity.

 After eliminating $a_2$ by  $a_2=1+a_4a_5$ and $a_1$ by 
$a_1=-1-a_3a_4$,  one obtains for the remaining variables 
$a_3,a_4,a_5$ just one equation and $\mathcal{R}$ is a non singular affine cubic  surface with three lines at infinity, given by 
$a_3a_4a_5+a_3+a_5+1=0$.

\section{ The Painlev\'e equations}

\subsection{Finding the Painlev\'e equations}

For each of the ten families of Section 3, with the exception of $(0,0,0,0)$, which is the well known classical case leading to PVI, we want to derive a corresponding Painlev\'e equation $q''=R(q,q',t)$.  

We choose one of the other nine cases. A Zariski open part $\mathcal{M}^0$ of the corresponding moduli space is represented by a suitable matrix differential operator.
Recall that there is a morphism  $pr: \mathcal{M}^0\rightarrow T \times \Lambda$, where $T$ denotes the space of the `time variable' $t$ and the parameter space $\Lambda$ consists of the local exponents for the regular singular points and the 
constant term of the generalized local exponents at the irregular singular points.

Choose $\lambda \in \Lambda$, let $a\in \mathcal{P}$ be the image
of $\lambda$ in the parameter space of $\mathcal{R}$. Write $\mathcal{M}^0_\lambda 
=pr^{-1}(T\times \{\lambda \})$ and  $\mathcal{R}_a$ for the fibre of $\mathcal{R}\rightarrow \mathcal{P}$ at $a$. Then the Riemann--Hilbert map restricts to
$RH_\lambda :\mathcal{M}^0_\lambda \rightarrow \mathcal{R}_a$ and the fibres of $RH_\lambda$ are parametrized by $t$. In particular, $\mathcal{M}^0_\lambda $ has dimension 3. This space
is represented by an explicit family of differential operators $\frac{d}{dz}+A$, where the entries
of $A$ are rational functions in $z$ with coefficients depending on three explicit variables, say
$f,g,t$.  Later on we will make a rather special choice for $f,g$. 
  
An isomonodromic family  $\frac{d}{dz}+A=\frac{d}{dz}+A(z,t)$ on  $\mathbb{P}^1$, parametrized by $t$, is a fibre of some $RH_\lambda$. The earlier variables $f,g$ are now functions of $t$.
Let $S$ denote the singular locus.  On $\mathbb{P}^1\setminus S$ there exists a multivalued fundamental matrix $Y(z,t)$, i.e., $(\frac{d}{dz}+A(z,t))Y(z,t)=0$, normalized by $\det Y(z,t)=1$. By isomonodromy, $\frac{d}{dt}Y(z,t)$ and $Y(z,t)$ have the same behaviour for Stokes and monodromy and thus $B(z,t):=-\frac{d}{dt}Y(z,t)\cdot Y(z,t)^{-1}$ is univalued and extends in a meromorphic way at the set $S$. Moreover 
$B:=B(z,t)$ has trace 0 since $\det Y(z,t)=1$. Therefore  the entries of $B$ are rational functions in $z$ and are analytic in $t$. It follows that the operators $\frac{d}{dz}+A(z,t)$ and $\frac{d}{dt}+B(z,t)$ commute. This is equivalent to the identity
 \[ \frac{d}{dt}A=\frac{d}{dz}B+[B,A] \mbox{ and } tr(B)=0 \  .\]   
This equality is seen as a differential equation for matrices $B$, rational in $z$ and with trace 0. Assume (as we will in the examples) that $\frac{d}{dz}+A$ is irreducible, then $B$ is unique. Indeed, the difference $C$ of two solutions is  rational in $z$ and satisfies $\frac{d}{dz}C=[C,A]$. Thus $C(\frac{d}{dz}+A)=(\frac{d}{dz}+A)C$ and $C$ is an endomorphism of $\frac{d}{dz}+A$. By irreduciblity, $C$ is a multiple of the identity
and $C=0$ because $tr(C)=0$.

For the actual computation of $B$ for the cases of Subsection 2.2, the following remarks are useful.
If $z=c$ is a regular, or regular singular point (without resonance), then $B$ has no pole at $c$. If the Katz invariant $r=r(c)>0$ is an integer, and the top coefficient of the eigenvalues at $c$ do not depend on $t$, then $ord _c(B)\geq -r+1$. If however this top coefficient depends on $t$, then $ord_c(B)\geq -r$. If the Katz invariant $r(c)=m+\frac{1}{2}$ with integer $m\geq 0$, then 
$ord_c(B)\geq -m-1$.  The above matrix equation yields explicit differential equations for $f,g$ as functions of $t$, and an explicit $B$.\\

The symbols  $p,q$  denote a {\it preferred  choice} for the variables $f,g$. To define and find them we consider
a pair $(t,\lambda )\in T\times \Lambda$ and the 2-dimensional space $\mathcal{M}^0_{t,\lambda}:=
pr^{-1}(\{(t,\lambda )\}$. Let $\frac{d}{dz}+A$ be the corresponding matrix differential operator
and let a cyclic vector $e$ be given. The monic  scalar differential operator 
$L:=(\frac{d}{dz})^2+a_1\frac{d}{dz}+a_0$ defined by $Le=0$ has, in general, a number of new singularities,  called {\it apparent singular points}. In Subsection 4.2, we will find {\it good cyclic vectors} $e$, defined by the condition that there is only one apparent singular point. This singular
point, varying in the family $\mathcal{M}^0_{(t,\lambda )}$, is the choice for $q$. 
To make this explicit, suppose that $A={a\ \ \ b\choose c\ -a }$ and that the first basis vector is the cyclic vector $e$. Then 
\[ L=(\frac{d}{dz})^2-\frac{c'}{c}\cdot \frac{d}{dz}-a'-a^2-bc+a\cdot \frac{c'}{c}, \mbox{ where } a'=\frac{da}{dz} \mbox{ etc.}\]
Thus $c$ has as rational function in $z$ a simple zero at $q$ and this yields a pole at $q$ with residue 1 in the coefficient of $\frac{d}{dz}$ in $L$. Now $p$ is defined as the residue at $q$ of the `constant term'
$-a'-a^2-bc+a\cdot \frac{c'}{c}$ of $L$, multiplied by a factor 
$F\in \{1,q,q^2,q(q-1)\}$ depending on the family $\mathcal{M}$.  This factor is introduced  for geometrical reasons in connection with the
 Okamoto--Painlev\'e pairs \cite{STT02,T} (see the formulas of 4.3).\\

 A Zariski open, dense part of the space $\mathcal{M}^0_{(t,\lambda )}$ is now parametrized by $p,q$. On this space we introduce the {\it symplectic structure} by the closed 2-form $\frac{dp\wedge dq}{F}$ (with $F\in \{1,q,q^2,q(q-1)\}$) and thus $p,q$ are canonical coordinates. The Zariski open subset of the space $\mathcal{M}^0_{\lambda}$ is  parametrized by $p,q,t$. This space has a foliation given by the isomonodromy families, i.e., the fibres of $RH_\lambda$.
 There is an {\it Hamiltonian function} $H=H(p,q,t)$, rational in $p,q$ and $t$, such that this foliation coincides with the foliation deduced from  the  closed 2-form $\Omega =\frac{dp\wedge dq}{F} 
 - dH\wedge dt$ on $\mathcal{M}^0_\lambda$. 
More precisely, the vector field $v = \frac{\partial}{\partial t} + v_p \frac{\partial }{\partial p} + v_q \frac{\partial}{\partial q}$ describing ismonodromic families satisfies $v \cdot \Omega = 0$ (see \cite{STT02}, Section 6 and \cite{T}, Subsection (2.3)).

  The important fact is that for an isomonodromic family, $q$ as function of $t$ satisfies the 
  Painlev\'e equation  $q''=R(q,q',t)$ that we are looking for. The functions $p,q$ of $t$ satisfy the Hamiltonian equations, modified with the factor $F\in \{1,q,q^2,q(q-1)\}$, thus $p'=F\cdot \frac{\partial H}{\partial q},\   q'=-F\cdot \frac{\partial H}{\partial p}$.

\subsection{Apparent singularities}
\label{ss:apparent}
 
 Let  $M$ denote a differential module over $\mathbb{C}(z)$ of rank 2 with $\det M\cong {\bf 1}$,
with singular points $0,1,\infty$ and  represented by  a connection 
$(\mathcal{V},\nabla )$ with $\mathcal{V}$ free and 
$\nabla : \mathcal{V}\rightarrow \Omega (n_0[1]+n_1[1]+n_\infty [\infty ])\otimes \mathcal{V}$ for
integers $n_0,n_1,n_\infty \geq 1$. Put  $V:=H^0(\mathbb{P}^1, \mathcal{V})$. Then $M=\mathbb{C}(z)\otimes V$ and $\partial :=\nabla _{\frac{d}{dz}}=\frac{d}{dz}+B_0+B_1+B_\infty$ with    
$B_0, \ B_1,\ B_\infty$ polynomials in $z^{-1},\ (z-1)^{-1},\ z$ of degrees $\leq n_0, \ n_1, -1+n_\infty$ and
with coefficients in ${\rm End}(V)$. The free module $N:=\mathbb{C}[z,\frac{1}{z(z-1)}]\otimes V$ over  $\mathbb{C}[z,\frac{1}{z(z-1)}]$ is invariant under $\partial$ and can be considered as a differential
module over  $\mathbb{C}[z,\frac{1}{z(z-1)}][\frac{d}{dz}]$. \\

Let $e\in M=\mathbb{C}(z)\otimes V$ be a cyclic vector, producing  the  scalar equation\\
$(\partial ^2+a_1\partial +a_0)e=0$. The poles of $a_1,a_0$, different from $0,1,\infty$ are called the {\it apparent singularities}. Let $s\neq 0,1,\infty $ have local parameter $u=z-s$. The elements  $e\in \mathbb{C}((u))\otimes V$ are written as formal Laurent series $\sum _{n\geq *}v_nu^n$ with all $v_n\in V$. 
Now $ord_s(e)$, the order of $e\neq 0$ at $s$, is defined to be the minimal integer $d$ with $v_d\neq 0$. \\

We will use the second exterior power $\Lambda ^2 N=\mathbb{C}[z,\frac{1}{z(z-1)}]\otimes \Lambda ^2V$. Let $e\in N$ be a cyclic vector and  $N^0\subset N$ the submodule generated by $e$ and $\partial e$. Then $\Lambda ^2N^0 =b\cdot \Lambda ^2N$ for some monic polynomial $b$ with $b(0)\neq 0,\ b(1)\neq 0$. 
The following lemma is an explicit calculation corresponding to \cite{IIS2}, Subsection(4.2).

\begin{lemma} The zero's of $b$ are the apparent  singular points. 
\end{lemma}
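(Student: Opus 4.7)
The plan is to localize at a point $s \neq 0,1,\infty$ with local parameter $u = z-s$ and work in the local ring $R_s$ of $R := \mathbb{C}[z,(z(z-1))^{-1}]$ at $s$. Since $\nabla$ has no pole at $s$, the operator $\partial := \nabla_{d/dz}$ stabilizes the free rank two $R_s$-module $N_s := R_s \otimes_R N$. Choosing an $R_s$-basis $(f_1,f_2)$ of $N_s$ and writing $e, \partial e$ in this basis, we get $e \wedge \partial e = D \cdot (f_1 \wedge f_2)$ for a well-defined $D \in R_s$ that generates (locally at $s$) the same ideal as $b$. So the problem reduces to proving that $D$ vanishes at $s$ if and only if $s$ is an apparent singular point.

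The implication \emph{apparent singularity $\Rightarrow$ zero of $b$} will follow from Cramer's rule. Wedging the cyclic-vector relation $\partial^2 e = -a_1 \partial e - a_0 e$ on the right with $e$ and with $\partial e$ respectively, one expresses $a_0$ and $a_1$ as quotients of elements of $R_s$ divided by $D$. Hence if either of them has a pole at $s$, the denominator $D$ must vanish at $s$, so $b(s) = 0$.

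For the converse I will argue by contradiction: suppose $D(s) = 0$ but both $a_0$ and $a_1$ are regular at $s$. Let $N^0_s \subset N_s$ denote the $R_s$-submodule generated by $e$ and $\partial e$. Since $e$ is a cyclic vector, $e$ and $\partial e$ are $\mathbb{C}(z)$-linearly independent, so $N^0_s$ is a full-rank sublattice of $N_s$; the assumption $D(s) = 0$ is precisely that this inclusion is strict. Moreover the relation $\partial^2 e = -a_0 e - a_1 \partial e$ together with the regularity of $a_0, a_1$ at $s$ shows that $N^0_s$ is stable under $\partial$.

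The main step, and the principal obstacle, is to rule out such a proper $\partial$-stable sublattice of a regular local connection. I will apply Smith normal form to produce an $R_s$-basis $(g_1, g_2)$ of $N_s$ and integers $a,b \geq 0$, not both zero, with $N^0_s = R_s u^a g_1 \oplus R_s u^b g_2$. Applying $\partial$ to the generator $u^a g_1$ gives $\partial(u^a g_1) = a u^{a-1} g_1 + u^a \partial g_1$; since $\partial g_1 \in N_s$, its $g_1$-coordinate is in $R_s$, so the $g_1$-coordinate of $\partial(u^a g_1)$ is $a u^{a-1} + u^a \cdot (\text{regular})$. The projection of $N^0_s$ onto the $g_1$-component is $u^a R_s$, so $\partial$-invariance forces $a u^{a-1} \in u^a R_s$, i.e. $a = 0$; symmetrically $b = 0$, contradicting $N^0_s \subsetneq N_s$. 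Therefore at least one of $a_0, a_1$ has a pole at $s$, which completes the proof.
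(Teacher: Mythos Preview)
Your proof is correct and takes a genuinely different route from the paper's. The paper argues by choosing a basis $w_1,w_2$ of \emph{flat sections} of $\mathbb{C}[[u]]\otimes V$ (i.e.\ $\partial w_i=0$), writing $e=c_1w_1+c_2w_2$, and then computing $a_0,a_1$ explicitly as Wronskian-type quotients $a_1=\dfrac{-c_1''c_2+c_1c_2''}{c_1c_2'-c_1'c_2}$, $a_0=\dfrac{-c_1''c_2'+c_1'c_2''}{c_1c_2'-c_1'c_2}$; it then reads off the orders of numerator and denominator case by case (including a separate treatment of the case $\mathrm{ord}_s(e)>0$), and checks that $e\wedge\partial e=(c_1c_2'-c_1'c_2)\,w_1\wedge w_2$ has the matching order. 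Your argument avoids flat sections and completions entirely: the forward direction is a clean Cramer's-rule observation, and the converse is the structural fact that a regular connection admits no proper full-rank $\partial$-stable sublattice, proved via elementary divisors over the DVR $R_s$. Your approach is more conceptual and would generalize more readily (higher rank, other base rings); the paper's explicit computation, on the other hand, yields finer information used just after the lemma---namely the exact pole orders $\mathrm{ord}_s(a_1)=-1$, $\mathrm{ord}_s(a_0)\ge -1$ when $\mathrm{ord}_s(e)=0$, and the identification of $\mathrm{ord}_s(e)=0$ with $b$ having at most a simple zero at $s$---which your argument does not directly supply.
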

\begin{proof} We fix a point $s\neq 0,1,\infty$ and show that $ord _s(b)>0$ if and only
if $s$ is an apparent singularity.  First we consider the case that $ord _s(e)=0$.
Since $s$ is non singular, $\mathbb{C}[[u]]\otimes V$ has a free basis $w_1,w_2$ over $\mathbb{C}[[u]]$ with $\partial w_1=\partial w_2=0$. Write $e=c_1w_1+c_2w_2$ with $\min (ord (c_1),ord (c_2))=0$.
 We may suppose that $ord (c_1)=0$ and $ord( c_2)=m\geq 1$. The equation for the
 cyclic vector is $\partial ^2+a_1\partial +a_0$ with
 $a_1=\frac{(-c_1''c_2+c_1c_2'')}{(c_1c_2'-c_1'c_2)},\ a_0=\frac{(-c_1''c_2'+c_1'c_2'')}{(c_1c_2'-c_1'c_2)}$. If $m=1$, then $ord_s(c_1c_2'-c_1'c_2)=0$ and $s$ is not
 an apparent singularity. If $m\geq 2$, then \begin{small}
 \[ord_s(c_1c_2'-c_1'c_2)=m-1,\  ord_s(-c_1''c_2+c_1c_2'')=m-2, \ 
 ord_s(-c_1''c_2'+c_1'c_2'')\geq m-2.\] \end{small}
 Thus $ord _s(a_1)=-1,\ ord_s(a_0)\geq -1$ and $s$ is an apparent singularity.
 
 Suppose now that $e= u^{n} f$, $n\geq 1,\ ord_s(f)=0$.  
The equation for $e$ is obtained from the scalar equation 
$\partial ^2+a_1\partial +a_0$ for $f$ by  the substitution 
$\partial \mapsto \partial -nu^{-1}$ and reads 
$\partial ^2+(-2nu^{-1}+a_1)\partial +(n^2+n)u^{-2}-na_1u^{-1}+a_0$. This introduces a pole if there was no pole before and  a pole of order 2 if there was already a pole.

For $e$ with $ord_s(e)$ one has 
$e\wedge \partial e= (c_1c_2'-c_1'c_2)w_1\wedge w_2$ and \\
$ord _s(c_1c_2'-c_1'c_2)=m-1$ and for $e=u^nf$ one has 
$e\wedge \partial e=u^{2n}f\wedge \partial f$. From this the statement follows.\end{proof}
 
By multiplying a given cyclic vector $e$ with $\prod _{s\neq 0,1}(z-s)^{-ord_s(e)}$, the number of zero's of $b$ (counted with multiplicity) goes down. The cyclic vectors with minimal degree for $b$ have the form $e\in \mathbb{C}[z,\frac{1}{z(z-1)}]\otimes V$ (or even, after multiplying with $z^*(z-1)^*$ one has $e\in \mathbb{C}[z]\otimes V$) and $ord_s(e)=0$ for all $s\neq 0,1$. We note that the condition $ord _s(e)=0$ is equivalent to $b$ has at most  simple zero's. We call a cyclic vector $e$ 
{\it good} if the corresponding $b$ has degree one (and thus there is only one apparent singularity).\\

{\it Application of Lemma 4.1 for finding good cyclic vectors $v\in V$}.\\ 

\noindent 
{\it Family $(0,0,1)$},  $\partial =\frac{d}{dz}+z^{-1}A_0+(z-1)^{-1}A_1+A_\infty$ and $A_0,A_1,A_\infty \in {\rm End}(V)$.\\
We only consider good cyclic vectors $v\in V$. The operator $\partial $ is multiplied by
$z(z-1)$. The condition that $v$ produces only one apparent singularity is equivalent to
$v\wedge ( z(z-1)A_\infty (v)+(z-1)A_0(v)+zA_1(v))$ (as element of $\mathbb{C}[z]\otimes \Lambda ^2V$)  has only one zero $\neq 0,1$. This is equivalent to $v$ is an eigenvector of $A_\infty$ or $A_0$ or $A_1$. Thus in total there are 6 good cyclic vectors, in  general.\\
\noindent {\it Family} $(0,0,1/2)$,  same formula for $\partial$ but with $A_\infty$
nilpotent.  The good cyclic vectors are the eigenvectors of the matrices $A_0,A_1,A_\infty$. There are, in general, 5 good cyclic vectors.\\
\noindent {\it Family} $(1,-,1)$. Now we work with a module over $\mathbb{C}[z,z^{-1}]$. 
Here $z^2\partial =z^2\frac{d}{dz}+A_2+A_1z+A_0z^2$. The condition on $v$ is
$v\wedge (A_2(v)+A_1(v)z+A_0(v)z^2)$ has only one zero $\neq 0$. Thus $v$ is an eigenvector of $A_2$ or of $A_0$. In general, there are total 4 good cyclic vectors and they come in pairs.\\
\noindent {\it Family} $(1/2,-,1)$. As before, but now $A_2$ is nilpotent and, in general, there are in 3 good cyclic vectors. \\
\noindent {\it Family} $(1/2,-,1/2)$. As before, but both $A_2$ and $A_0$ are nilpotent. Thus, in general, 2 good cyclic vectors.\\
\noindent {\it Family} $(0,-,2)$. $z\partial =z\frac{d}{dz}+A_0+zA_1+z^2A_2$. Then 
a  good cyclic vector is eigenvector of $A_0$ or of $A_2$. Thus, in general 4 good cyclic
vectors.\\
\noindent {\it Family} $(0,-,3/2)$. As before, but now, in general, 3 good cyclic vectors because $A_2$ is nilpotent.\\
\noindent {\it Family} $(-,-,3)$. Now we work over the ring $\mathbb{C}[z]$ and 
$\partial =\frac{d}{dz}+A_0+zA_1+z^2A_2$. The possible $v$ are eigenvector for $A_2$. Thus 2 good cyclic vectors.\\ 
\noindent {\it Family} $(-,-,5/2)$. As before, but only one good cyclic vector, since 
$A_2$ is nilpotent.\\

\begin{remarks}  {\rm 
(1) In general there are more complicated good cyclic vectors, than those in $V$. However
for $(-,-,5/2)$ there is only one good cyclic vector.\\
(2) If a cyclic vector $v\in V$ is eigenvector of two of the matrices, then
the corresponding scalar equation has no apparent singularity.
}\end{remarks} 

\newcommand{\C}{{\mathbb{C}}}
\newcommand{\cM}{{\mathcal M}}

\subsection{Family $(0, 0, 1)$ and Painlev\'e V,  PV($\tilde{D}_5$)}
\label{ss:p5-d5}

In terms of the parameters $(p,q)$ of Subsection 4.1, the family reads

\begin{table}[h]
\begin{center}
\begin{tabular}{|c|c|c|c|} \hline
The singularities $z$ & $0$ & $1$ & $\infty $ \\ \hline 
Katz invariant & $0$ & $0$ &  $1$ \\ \hline 
  & & & \\
generalized local exponents & $\pm \frac{\theta_0}{2} $ & $\pm \frac{\theta_1}{2}$ & 
$\pm ( \frac{t}{2} z + \frac{\theta_{\infty}}{2})$ \\
  & & & \\   \hline 
\end{tabular}

\label{tab:p5-d5}
\end{center}
\end{table}

\begin{equation}\label{eq:p5-linear}
\nabla_{\frac{d}{dz}}= \frac{d}{dz}+\frac{A_0}{z}+\frac{A_1}{z-1}+A_{\infty} =\frac{d}{dz} + \frac{1}{z(z-1)}A \ \ \mbox{ with }
\end{equation}

 \[
A_0=\left(
\begin{array}{cc}
 -p-\frac{1}{2} q \left(q t-t+\theta
   _{\infty }\right) &
   \frac{(q-1)\left( \left(p+\frac{1}{2} q \left(q
   t-t+\theta _{\infty
   }\right)\right){}^2-\frac{\theta
   _0^2}{4}\right)}{ q } \\
 -\frac{q}{q-1} & p+\frac{1}{2} q \left(q t-t+\theta
   _{\infty }\right)
\end{array}
\right),
\] 
\[
A_1=\left(
\begin{array}{cc}
 p+\frac{(q-1)( q
   t+\theta _{\infty })}{2} &
   \left(\frac{\theta _1}{2}\right)^2-\left( p+\frac{(q-1)(
   q t+ \theta _{\infty
   })}{2}\right){}^2 \\
 1 &  -\left(p+\frac{(q-1)( q
   t+\theta _{\infty })}{2}\right)
\end{array}\right)
\]
\[
 \quad A_{\infty} = \left(
\begin{array}{cc}
-\frac{t}{2} & 0  \\
 0 & \frac{t}{2}  
\end{array}
\right). 
\]
Write $ A=z(z-1)(A_{\infty}+ A_{0}/z + A_{1}/(z-1))= \left(
\begin{array}{ll}
a(z) & b(z) \\
c(z) & -a(z) 
\end{array}
\right)$ with 
\footnotesize
\begin{eqnarray*}
 a(z) & = & p+\frac{1}{2} \left(-2 q
   t+t-\theta _{\infty }\right) (z-q) -\frac{1}{2} t (z-q)^2, \\
b(z) & = &  -\frac{z\left(\left(p+\frac{1}{2} (q-1) q\right)^2-\frac{\theta
   _1^2}{4}+(q-1) \left(\frac{\theta
   _0^2}{4}-\frac{\theta _1^2}{4}-\frac{q \theta
   _{\infty }^2}{4}\right)\right)}{q} \\
&& -\frac{(q-1) \left(p+\frac{1}{2} \left((q-1) q
   t-\theta _0+q \theta _{\infty }\right)\right)
   \left(p+\frac{1}{2} \left((q-1) q t+\theta _0+q
   \theta _{\infty }\right)\right)}{q}, 
\\
c(z) & = &   -\frac{ (z-q)}{q-1}.
\end{eqnarray*}

\normalsize
The first basis vector is chosen as cyclic vector and following Subsection 4.1,
$q$ is the unique zero of $c(z)$ and $p=a(q)$.

The parameter $(p, q)$ gives  canonical coordinates on an affine Zariski open set $U_0\cong \C \times \left(\C \setminus \{ 0, 1 \}\right) $ of the moduli space 
$\cM_{t, \lambda}$ of the connections with fixed $t$ and fixed generalized local exponents $\lambda$.  The symplectic form 
on $U_0$, which is natural from the view point of Okamoto--Painlev\'e pair,  
 is given by $ \frac{dp \wedge dq}{q(q-1)}.$

The matrix different operator $\frac{d}{dt}+B$, commuting with 
 $\nabla _{\frac{d}{dz}}$, has the form 
\[ B= z B_0  + \frac{1}{t}B_1 \mbox{ where }
B_0= \left(
\begin{array}{cc}
 -\frac{1}{2} & 0 \\
 0 & \frac{1}{2}
\end{array}
\right) \mbox{ and } B_1=
\]  
\[
\left(
\begin{array}{cc}
 -\frac{p}{q-1}-\frac{1}{2} (q-1)
   t-\frac{\theta _{\infty }}{2} & -\frac{\left(p+\frac{1}{2} (q-1) q
   t\right)^2-\frac{\theta _1^2}{4}+(q-1)
   \left(\frac{\theta _0^2}{4}-\frac{\theta
   _1^2}{4}-\frac{q \theta _{\infty
   }^2}{4}\right)}{q } \\
 -\frac{1}{q-1} & \frac{p}{q-1}+\frac{1}{2} (q-1)
   t+\frac{\theta _{\infty }}{2}
\end{array}
\right).\]

>From $[\frac{d}{dt}+B,\nabla _{\frac{d}{dz}}]=0$ one deduces the following.\\
\noindent
{\bf Painlev\'e V, PV($\tilde{D}_5$)}
\small
\begin{equation}\label{eq:p5} 
\left\{ \begin{array}{lcl}
\displaystyle{\frac{dq}{dt}} & = & \displaystyle{\frac{2 p}{t}} \\
\displaystyle{\frac{dp}{dt}} & = & 
\displaystyle{\frac{(2 q-1) p^2}{(q-1) q t}
+\frac{\theta_0^2(q-1)^2-\theta_1^2 q^2}{4q (q-1) t}
+\frac{1}{4} (q-1) q \left(2 q
   t-t+2 \theta _{\infty }-2\right)} 
\end{array}\right. 
\end{equation}
\normalsize

\noindent
The equation (\ref{eq:p5}) is equivalent to the second order differential equation of $q$
\begin{equation}\label{eq:p5-s}
q'' =  \frac{(2 q-1)
   \left(q'\right)^2}{2 (q-1)q}
    -\frac{q'}{t}
    +\frac{(q-1) q \left(2 q t-t+2\theta _{\infty }-2\right)}{2t}
   +\frac{\theta _0^2}{2 qt^2}
   +\frac{\theta _1^2}{2 (q-1)t^2}\ .
\end{equation}
By a rational transformation of $(p,q)$, this can be transformed  into the classical Painlev\'e V in \cite{JM}.

Now we compute the Hamiltonian function $H_V=H_V(p, q, t, \theta)$ for (\ref{eq:p5}), which is a rational function of $(p, q, t)$. It is defined by the property
that the foliation given by the  2-form $\Omega = \frac{dp \wedge dq}{q(q-1)} - dH_V \wedge dt$ on $U_{0} \times (\C \setminus \{0\})$ coincides with the
foliation given by isomonodromy. The latter is given by the vector field $v$, 
equivalent to (\ref{eq:p5}), satisfying $v\cdot \Omega =0$ and of the form
\[
v = \frac{\partial}{\partial t} + v_p \frac{\partial}{\partial p} + v_q \frac{\partial}{\partial q}, 
\mbox{ with } v_p = \frac{dp}{dt},\  v_q = \frac{dq}{dt}.\]  
\[ \mbox{Now }
0=v \cdot \Omega = dH_V + v_p \frac{dq}{q(q-1)} -  v_q \frac{dp}{q(q-1)}, 
\mbox{ is equivalent to} \] 
\begin{equation}
\left\{
\begin{array}{ccc}
\displaystyle{\frac{dp}{dt}} & =& q(q-1)\frac{\partial H_V}{\partial q} \\
\displaystyle{\frac{dq}{dt}} &=&  -q(q-1)\frac{\partial H_V}{\partial p} \\
\end{array} \right.
\end{equation}
Comparing this with (\ref{eq:p5}), one obtains the following expression for $H_V$  
\begin{equation}
H_V(p, q, t)=-\frac{p^2}{(q-1) q t}-\frac{\theta _0^2}{4 q
   t}+\frac{\theta _1^2}{4 (q-1) t}+\frac{1}{4} q
   \left(q t-t+2 \theta _{\infty }-2\right)\ .
\end{equation}

\normalsize
\subsection{\bf Family $(0, 0, 1/2)$ and degenerate Painlev\'e V,  
${\rm PV}_{\rm deg}$($\tilde{D}_6$).}
\label{ss:degp5-d6}

\normalsize
${\rm PV}_{\rm deg}$ stands for `degenerate PV' (cf. \cite{OO}) which turns out to be equivalent to Painlev\'e equation of type PIII($\tilde{D}_{6}$).
The first basis vector is chosen as cyclic vector, the $(p,q)$ are as in Subsection
4.1 and the family reads

\begin{table}[h]
\begin{center} \small
\begin{tabular}{|c|c|c|c|} \hline
The singular points  $z$ & $0$ & $1$ & $\infty $ \\ \hline 
Katz invariant & $0$ & $0$ &  $\frac{1}{2}$ \\ \hline 
  & & & \\
generalized local exponents & $\pm \frac{\theta_0}{2} $ & $\pm \frac{\theta_1}{2}$ & 
$\pm  t z^{\frac{1}{2}}$ \\
  & & & \\   \hline 
\end{tabular}

\label{tab:deg-p5}
\end{center}
\end{table}

\begin{equation}\label{eq:pdeg5-linear}
\nabla_{\frac{d}{dz}}= \frac{d}{dz}+\frac{A_0}{z}+\frac{A_1}{z-1}+A_{\infty} 
= \frac{d}{dz} +\frac{1}{z(z-1)}A \mbox{ with }
\end{equation}
\[
A_0=\left(
\begin{array}{cc}
 -p & \frac{\theta_0^2-4 p^2}{4 q} \\
 q & p
\end{array}
\right)
 \quad 
A_1=\left(
\begin{array}{cc}
 p & \frac{4 p^2-\theta _1^2}{4 (q-1)} \\
 1-q & -p
\end{array}
\right)
\quad 
A_\infty =\left(
\begin{array}{ll}
 0 & t^2 \\
 0 & 0
\end{array}
\right)
\]
\[
A=
\left(
\begin{array}{cc}
 p & L \\
 z-q & -p
\end{array}
\right), \
L:=\frac{(q+z-1) p^2}{(q-1) q}+\frac{(z-1) \theta _0^2}{4 q}-\frac{z \theta _1^2}{4
   (q-1)}+t^2 (z-1) z
\]
The operator $\frac{d}{dt}+B$ with $[\frac{d}{dt}+B,\nabla _{\frac{d}{dz}}]=0$ satisfies $ B= z B_0  + B_1$, where
\[B_0= \left(
\begin{array}{cc}
 0 & 2t \\
 0 & 0
\end{array}
\right), 
B_1=\left(
\begin{array}{cc}
 0 & \displaystyle{\frac{2 p^2}{(q-1) q
   t}+\frac{\theta _0^2}{2 q
   t}-\frac{\theta _1^2}{2 (q-1) t}+2
   (q-1) t} \\
 \displaystyle{\frac{2}{t}} & 0
\end{array}
\right).
\]

Solving $[\frac{d}{dt}+B,\nabla _{\frac{d}{dz}}]=0$ with {\em Mathematica}
yields the following. \\
\noindent
{\bf Degenerate Painlev\'e V, ${\rm PV}_{\rm deg}$($\tilde{D}_6$)}
\begin{equation} \label{eq:degp5-sys}
\left\{
\begin{array}{lcl}
\displaystyle{\frac{dq}{dt}} & = & \displaystyle{\frac{4p}{t}} \\
\displaystyle{\frac{dp}{dt}} & = & \displaystyle{  \frac{2(2q-1) p^2}{(q-1) q t}+\frac{(q-1)\theta_0^2}{2 q t}-\frac{q\theta_1^2}{2 (q-1) t}+2 q(q-1) t}
\end{array}
\right.
\end{equation}

\begin{equation}
q''=\frac{(2 q-1) \left(q'\right)^2}{2 (q-1) q}-\frac{q'}{t}+ \frac{2 (q-1) \theta _0^2}{q t^2}-\frac{2 q \theta _1^2}{(q-1)
   t^2}+8 (q-1)q
\end{equation}

The  2-form on $\C \times (\C \setminus \{ 0, 1 \}) \times (\C \setminus \{ 0 \})$, natural for the Okamoto--Painlev\'e pair of type  $\tilde{D}_6$,  is given by 
\[ \Omega =\frac{dp \wedge dq}{q(q-1)} - dH_{dV}\wedge dt, 
\mbox{ where }H_{dV} = H_{dV}(p, q,t,  \theta) \mbox{ is equal to}\]   
\begin{eqnarray}\label{eq:pd5}
H_{dV}(p, q,t,  \theta) & = & -\frac{2 p^2}{(q-1) q t}-
\frac{\theta _0^2}{2 q t}+\frac{\theta
   _1^2}{2 (q-1) t}+2 q t,  \\
   & = &  \frac{2(p^2- \left(\frac{\theta_0}{2}\right)^2)}{tq} 
   - \frac{2(p^2- \left(\frac{\theta_1}{2}\right)^2)}{t(q-1)}+2 q t.
\end{eqnarray}
We note that equation (\ref{eq:degp5-sys}) is equivalent to the  Hamiltonian system
\begin{equation}
\left\{
\begin{array}{ccc}
\displaystyle{\frac{dq}{dt}} &=& 
\displaystyle{-q(q-1)\frac{\partial H_{dV}}{\partial p}}, \\
\displaystyle{\frac{dp}{dt}} & =& 
\displaystyle{ q(q-1)\frac{\partial H_{dV}}{\partial q}}. 
\end{array} \right .  
\end{equation}
 \normalsize

\subsection{\bf  Family $(1,-,1)$ and Painlev\'e III,  PIII($\tilde{D}_6$).}
\label{ss:p3-d6}

As before, the first basis vector is chosen to be the cyclic vector,  $(p,q)$
are as introduced in Subsection 4.1 and the operator $\frac{d}{dt}+B$
commuting with $\nabla _{\frac{d}{dz}}$ has the form 
$B=zB_0  + B_1 + \frac{1}{z}B_2$. We present now the data.
\begin{equation}\label{eq:p3-d6}
\nabla_{\frac{d}{dz}}= \frac{d}{dz}+\frac{1}{z^2}A_0+\frac{1}{z}A_1+A_{2} 
= \frac{d}{dz} +\frac{1}{z^2}A. \end{equation}
\begin{table}[h]
\begin{center}
\begin{tabular}{|c|c|c|} \hline
The singular points  $z$ & $0$ &  $\infty $ \\ \hline 
Katz invariant & $1$ &   $1$ \\ \hline 
   & & \\
generalized local exponents & $\pm (\frac{t}{2} z^{-1} +\frac{\theta_{0}}{2})$ &  
$\pm (\frac{t}{2} z +\frac{\theta_{\infty}}{2})$ \\
  & &\\   \hline 
\end{tabular}
\label{tab:p3-d6}
\end{center}
\end{table}
 \[
A_0=\left(\begin{array}{cc} 
\frac{1}{2} \left(-t  q^2-\theta_{\infty } q+2  p\right) & 
\frac{t^2 q^4+2   t \theta _{\infty }   q^3+\theta _{\infty }^2  q^2-4 p t q^2-4 p \theta  _{\infty } q+4 p^2-t^2}{4 q } \\ -q  & 
\frac{1}{2} \left(t  q^2+\theta _{\infty } q-2 p\right)
\end{array}\right),  \] 
\[A_1 
= \left(\begin{array}{cc} \frac{\theta _{\infty }}{2} &  \frac{t^2 q^4-
\theta  _{\infty }^2 q^2-4 p t   q^2-2 t \theta _0 q+4  p^2-t^2}{4 q^2 } \\
 1 & -\frac{\theta _{\infty  }}{2}\end{array}\right),  \ \ 
 A_2 =  \left( \begin{array}{cc} \frac{t}{2} & 0 \\ 0 & -\frac{t}{2}\end{array} \right),  \]
\[A=\left(
\begin{array}{cc}
 p+\frac{1}{2} (z-q) \left(q
   t+z t+\theta _{\infty
   }\right) & L 
   \\
 (z-q) & -p-\frac{1}{2} (z-q)
   \left(q t+z t+\theta
   _{\infty }\right)
\end{array}
\right), 
\]
\footnotesize
\begin{eqnarray*}& 
L & = \frac{q \left(t q^2+\theta _{\infty } q-2 p+t\right)
   \left(t q^2+\theta _{\infty} q-2 p-t\right)}{4q^2} \\ && \hspace{3cm} + \frac{z
   \left(t^2 q^4-\left(\theta_{\infty }^2+4 p
   t\right) q^2-2 t \theta_0 q+4 p^2-t^2\right)}{4q^2}. 
\end{eqnarray*}
\normalsize
\[B_0 =  \left(
\begin{array}{cc}
 \frac{1}{2} & 0 \\
 0 & -\frac{1}{2}
\end{array}
\right),  \\
B_1  = 
\left(
\begin{array}{cc}
 q +\frac{\theta_{\infty}}{2t}  & \frac{t^2 q^4-\theta
   _{\infty }^2 q^2-4 p t
   q^2-2 t \theta _0 q+4
   p^2-t^2}{4 q^2  t} \\
 \frac{1}{t} & -q -\frac{\theta_{\infty}}{2t}
\end{array}
\right),    \]
\[ B_2  =  
\left(
\begin{array}{cc}
 \frac{t q^2+\theta _{\infty }
   q-2 p}{2 t} & \frac{-4
   p^2+\left(1-q^4\right)
   t^2+2 q^2 t \left(2 p-q
   \theta _{\infty }\right)+q
   \theta _{\infty } \left(4
   p-q \theta _{\infty
   }\right)}{4 q  t} \\
 \frac{q }{t} & \frac{-t
   q^2-\theta _{\infty } q+2
   p}{2 t}
\end{array}
\right). \] 
Solving the equation $[\frac{d}{dt}+B,\nabla _{\frac{d}{dz}}]=0$ yields the following.\\

\noindent
{\bf Painlev\'e III, PIII($\tilde{D}_6$). }
\begin{equation} \label{eq:p3-d6-sys}
\left\{
\begin{array}{lcl}
\displaystyle{\frac{dq}{dt}} & = & \displaystyle{\frac{4 p+q}{t}} \\
  &  &  \\
\displaystyle{\frac{dp}{dt}} & = & \displaystyle{ 
\frac{4 p^2}{q
   t}+\frac{p}{t}+t q^3 + q^2-\frac{t}{q}-\theta _0+q^2 \theta
   _{\infty }}
\end{array}
\right.
\end{equation}
The system (\ref{eq:p3-d6-sys}) is equivalent to the following 
second order equation. 
\begin{equation}\label{eq:p3-d6-single}
q''=  \frac{\left(q'\right)^2}{q}-\frac{q'}{t}
-\frac{4 \theta_0}{t}+
\frac{4 (\theta _{\infty}+1) q^2}{t}
+4 q^3 -\frac{4}{ q}.
\end{equation} 
 The equations (\ref{eq:p3-d6-sys}) or (\ref{eq:p3-d6-single}) are defined on 
$ \C \times \C \setminus \{ 0 \} \times \C \setminus \{ 0 \} $ and the 2-form $\Omega$
on this affine open set  is 
\[
\Omega =\frac{dp \wedge dq}{q^2} - dH_{III}\wedge dt
\]
where $H_{III} = H_{III}(p, q,t,  \theta) $ is 
 a Hamiltonian function for PIII given by
\begin{equation}\label{eq:p3-d6}
H_{III}(p, q,t,  \theta)  =  -\frac{2 p^2}{q^2 t}-\frac{p}{q
   t}+q+\frac{q^2 t}{2}+\frac{t}{2
   q^2}+\frac{\theta _0}{q}+q \theta
   _{\infty } 
\end{equation}
As before, the equation (\ref{eq:p3-d6-sys}) is equivalent to the  Hamiltonian system:
\begin{equation}
\left\{
\begin{array}{ccc}
\displaystyle{\frac{dq}{dt}} &=& 
\displaystyle{-q^2\frac{\partial H_{III}}{\partial p}}, \\
\displaystyle{\frac{dp}{dt}} & =& 
\displaystyle{ q^2
\frac{\partial H_{III}}{\partial q}}. 
\end{array} \right.
\end{equation}

\normalsize

\subsection{ Family $(1/2,-, 1)$:  Painlev\'e III$D_7$,  PIII($\tilde{D}_7$).}
\label{ss:p3-d7}

This family can be written as 
\begin{equation}\label{eq:p3-d7-linear}
\nabla_{\frac{d}{dz}}= \frac{d}{dz}+\frac{1}{z^2}A_0+\frac{1}{z}A_1+A_2 
= \frac{d}{dz} +\frac{1}{z^2}A.
\end{equation}
The items $p,q,B$ are as before and the form of $B$ is $zB_0+B_1$. We give now the
explicit data and the results on the Painlev\'e equation and the Hamiltonian. 
\begin{table}[h]
\begin{center}
\begin{tabular}{|c|c|c|} \hline
The singular points  $z$ & $0$ &  $\infty $ \\ \hline 
Katz invariant & $1/2$ &   $1$ \\ \hline 
   & & \\
generalized local exponents & $\pm 
z^{-1/2}$ &  
$\pm (\frac{t}{2} z +\frac{\theta_{\infty}}{2})$ \\
  & &\\   \hline 
\end{tabular}
\label{tab:p3-d7}
\end{center}
\end{table}
 \[ A_0 =\left(
\begin{array}{cc}
 \frac{1}{2} \left(-t q^2-\theta _{\infty } q+2
   p\right) & \frac{\left(t q^2+\theta _{\infty }
   q-2 p\right){}^2}{4 q } \\
 -q  & \frac{1}{2} \left(t q^2+\theta _{\infty }
   q-2 p\right)
\end{array}
\right)
\]
\[
A_1 =  \left(\begin{array}{cc} \frac{\theta _{\infty }}{2} & \frac{t^2
   q^4-\theta _{\infty }^2 q^2-4 p t q^2-4 q+4   p^2}{4 q^2} \\
 1 & -\frac{\theta _{\infty }}{2}\end{array} \right), \ \
  A_2 =  \left( \begin{array}{cc} \frac{t}{2} & 0 \\
 0 & -\frac{t}{2}\end{array} \right) \mbox{ and } A=\]
\[\begin{footnotesize} \left(
\begin{array}{cc}
 p+\frac{1}{2} (z-q) \left(q t+z t+\theta _{\infty
   }\right) & 
   \frac{q(-t q^2-\theta _{\infty } q+2 p)^2 +z (t^2  q^4-\theta _{\infty }^2 q^2-4 p t q^2-4 q+4 p^2)}{4 q^2 } \\
 (z-q) & -p-\frac{1}{2} (z-q) \left(q t+z t+\theta_{\infty }\right)
\end{array}
\right) \end{footnotesize} \]

\footnotesize

\normalsize
\[B_0  =   \left(
\begin{array}{cc}
 \frac{1}{2} & 0 \\
 0 & -\frac{1}{2}
\end{array}
\right),  \ \
B_1 =  
\left(
\begin{array}{cc}
\frac{q}{2}+\frac{\theta _{\infty }}{2 t} & \frac{t^2 q^4-\left(\theta _{\infty }^2+4 p
   t\right) q^2-4 q+4 p^2}{4 q^2 t} \\
 \frac{1}{t} &- \frac{q}{2}-\frac{\theta _{\infty }}{2 t}
\end{array}
\right). \]
\noindent
{\bf Painlev\'e III$D_7$, PIII($\tilde{D}_7$)}
\begin{equation} \label{eq:p3-d7-sys}
\left\{
\begin{array}{lcl}
\displaystyle{\frac{dq}{dt}} & = & \displaystyle{\frac{2 p}{t}} \\
  &  &  \\
\displaystyle{\frac{dp}{dt}} & = & \displaystyle{ \frac{2
   p^2}{t q}+ \frac{t q^3}{2}+\frac{1}{2} \left(\theta
   _{\infty }+1\right) q^2-\frac{1}{t}
}
\end{array}
\right.
\end{equation}
The system (\ref{eq:p3-d7-sys}) is equivalent to the following 
second order equation. 
\begin{equation}\label{eq:p3-d7-single}
q''= \frac{\left(q'
   \right)^2}{q}-\frac{q'}{t}+\frac{\left(\theta _{\infty }+1\right)
   q^2}{t}+ q^3-\frac{2}{t^2}
\end{equation} 

\begin{eqnarray}\label{eq:p3-d7}
H_{IIID_7}(p, q,t,  \theta) & = & -\frac{p^2}{q^2 t}+\frac{q^2 t}{4}+\frac{1}{2} q
   \left(\theta _{\infty }+1\right)+\frac{1}{q t} .
\end{eqnarray}
\begin{equation}
\left\{
\begin{array}{ccc}
\displaystyle{\frac{dq}{dt}} &=& 
\displaystyle{-q^2\frac{\partial H_{IIID_7}}{\partial p}}, \\
\displaystyle{\frac{dp}{dt}} & =& 
\displaystyle{ q^2
\frac{\partial H_{IIID_7}}{\partial q}}. 
\end{array} \right.
\end{equation}

\subsection{\bf Family $(1/2,-, 1/2)$: Painlev\'e III$D_8$,  PIII($\tilde{D}_8$).} \label{ss:p3-d8}

We present the data and the results of the computation.
\begin{equation}\label{eq:p3-d8-linear}
\nabla_{\frac{d}{dz}}= \frac{d}{dz}+\frac{1}{z^2}A_0+\frac{1}{z}A_1+A_{2} 
= \frac{d}{dz} +\frac{1}{z^2}A.
\end{equation}

\begin{table}[h]
\begin{center}
\begin{tabular}{|c|c|c|} \hline
The singular points  $z$ & $0$ &  $\infty $ \\ \hline 
Katz invariant & $1/2$ &   $1$ \\ \hline 
   & & \\
generalized local exponents & $\pm 
\sqrt{t}\cdot  z^{-1/2}$ &  
$\pm z^{1/2}$ \\
  & &\\   \hline 
\end{tabular}
\label{tab:p3-d8}
\end{center}
\end{table}
\[A_0  =  \left( \begin{array}{cc}0 & 0 \\ -q & 0\end{array}\right),
\ \
A_1 =  \left(\begin{array}{cc} \frac{p}{q} & -\frac{t}{q} \\
 1 & -\frac{p}{q}\end{array}\right), \ \
 A_2 = \left(\begin{array}{cc} 0 & 1 \\ 0 & 0\end{array}\right), 
 \] 
\[ A=\left( \begin{array}{cc}
 \frac{p z}{q} & \frac{z (q z-t)}{q} \\ z-q & -\frac{p z}{q}\end{array}\right),\
B = B_0  + \frac{1}{z} B_1 \mbox{ where } \]
\[ B_0  =   \left( \begin{array}{cc} 0 & \frac{1}{q} \\ 0 & 0 \end{array} \right),  \ \
B_1  = \left( \begin{array}{cc} 0 & 0 \\ \frac{q}{t} & 0\end{array} \right). \] 
\noindent
{\bf Painlev\'e III$D_8$, PIII($D_8$).}
\begin{equation} \label{eq:p3-d8-sys}
\left\{
\begin{array}{lcl}
\displaystyle{\frac{dq}{dt}} & = & \displaystyle{ \frac{2 p+q}{t}} \\
  &  &  \\
\displaystyle{\frac{dp}{dt}} & = & \displaystyle{\frac{2 p^2}{q
   t}+\frac{p}{t}+\frac{q^2}{t}-1=\frac{q^3+p q-t q+2 p^2}{q t}}
\end{array}
\right.
\end{equation}
The system (\ref{eq:p3-d8-sys}) is equivalent to the following 
second order equation. 
\begin{equation}\label{eq:p3d8-sing}
q''= \frac{\left(
   q'\right)^2}{q}-\frac{q'}{t}+\frac{2
   q^2}{t^2}-\frac{2}{t}
\end{equation} 
\begin{eqnarray}\label{eq:p3-d8}\Omega=\frac{dp \wedge dq}{q^2} - dH_{IIID_8}\wedge dt, \
H_{IIID_8} & = &-\frac{p^2}{q^2 t}-\frac{p}{q
   t}+\frac{1}{q}+\frac{q}{t}
\end{eqnarray}
The equation (\ref{eq:p3-d8-sys}) is equivalent to the following Hamiltonian system:
\begin{equation}
\left\{
\begin{array}{ccc}
\displaystyle{\frac{dq}{dt}} &=& 
\displaystyle{-q^2\frac{\partial H_{IIID_8}}{\partial p}}, \\
\displaystyle{\frac{dp}{dt}} & =& 
\displaystyle{ q^2
\frac{\partial H_{IIID_8}}{\partial q}}. 
\end{array} \right.
\end{equation}

\subsection{ Family $(0,-,2)$ and   Painlev\'e IV,  PIV($\tilde{E}_{6}$).}
\label{ss:p4-e6}

The family of connection with this data can be written as  
\begin{equation}\label{eq:p4-e6-linear}
\nabla_{\frac{d}{dz}}= \frac{d}{dz}+\frac{1}{z}A_0+A_1+zA_{2} 
= \frac{d}{dz} +\frac{1}{z}A,
\end{equation}
\[ A_0  =  \left(
\begin{array}{cl}
 \displaystyle{-q^2-\frac{tq}{2}+p} & \displaystyle{\frac{q^4+t q^3+\frac{t^2 q^2}{4}-2 p q^2- 
 t pq+p^2-\frac{\theta _0^2}{4}}{q }} \\
 -q  & \displaystyle{q^2+\frac{tq}{2}-p}
\end{array}
\right), \]
\[ A_1 =  \left(
\begin{array}{ll}
 \frac{t}{2} & 2 q^2+t q-2 p+\theta _{\infty }
   \\
 1 & -\frac{t}{2}
\end{array}
\right), 
 \ \
 A_2  =  \left( \begin{array}{cc}
 1 & 0 \\
 0 & -1
\end{array}
\right). \]
\begin{table}[h]
\begin{center}
\begin{tabular}{|c|c|c|} \hline
The singular points  $z$ & $0$ &  $\infty $ \\ \hline 
Katz invariant & $0$ &   $2$ \\ \hline 
   & & \\
generalized local exponents & $\pm \frac{\theta_0}{2}$ &  
$\pm ( z^{2} + \frac{t}{2} z+ \frac{\theta_{\infty}}{2} )$ \\
  & &\\   \hline 
\end{tabular}
\label{tab:p4-e6}
\end{center}
\end{table}
\[B=zB_1+B_2,
B_1  =   \left(
\begin{array}{cc}
 1/2 & 0 \\
 0 & -1/2
\end{array}
\right),  
B_2  =  
\left(
\begin{array}{cc}
 \frac{q}{2}+\frac{t}{4} & 
 q^2+\frac{tq}{2}-p+\frac{\theta _{\infty}}{2} \\
 \frac{1}{2} &   -\frac{q}{2}-\frac{t}{4}
\end{array}
\right)\]
\noindent
{\bf Painlev\'e IV, PIV($\tilde{E}_6$)}

\begin{equation} \label{eq:p4-e6-sys}
\left\{
\begin{array}{lcl}
\displaystyle{\frac{dq}{dt}} & = & p \\
  &  &  \\
\displaystyle{\frac{dp}{dt}} & = & 
\displaystyle{\frac{3 q^3}{2}+t q^2+\frac{1}{8} \left(t^2+4
   \theta _{\infty }+4\right) q+\frac{4
   p^2-\theta _0^2}{8 q}} \\
   & = & \displaystyle{\frac{12 q^4+8 t q^3+t^2 q^2+4 \theta _{\infty
   } q^2+4 q^2+4 p^2-\theta _0^2}{8 q}}
\end{array}
\right.
\end{equation}
The system (\ref{eq:p4-e6-sys}) is equivalent to the following 
second order equation. 
\begin{equation}\label{eq:p4-e6-sing}
q''= \frac{\left(q'\right)^2}{2 q}+\frac{3 q^3}{2}+t q^2+\frac{1}{8} \left(t^2+4
   \theta _{\infty }+4\right) q-\frac{ \theta_0^2}{8q}.
\end{equation} 
\begin{eqnarray}\label{eq:p4-e6}
H_{IVE_6}(p, q,t,  \theta) & = &-\frac{p^2}{2q}+\frac{q^3}{2}+
\frac{ t q^2}{2} +\frac{(t^2+4\theta _{\infty }+4)q}{8}+
\frac{\theta_0^2}{8q}.
\end{eqnarray}
Equation (\ref{eq:p4-e6-sys}) is equivalent to the 
following Hamiltonian system:
\begin{equation}
\left\{
\begin{array}{ccc}
\displaystyle{\frac{dq}{dt}} &=& 
\displaystyle{-q\frac{\partial H_{IVE_6}}{\partial p}}, \\
\displaystyle{\frac{dp}{dt}} & =& 
\displaystyle{q\frac{\partial H_{IVE_6}}{\partial q}}. 
\end{array} \right.
\end{equation}

\subsection{Family $(0,- ,3/2)$ and Painlev\'e II,  PIIFN($\tilde{E}_{7}$).}
\label{ss:p2-e7-fn}

PIIFN($\tilde{E}_{7}$) stands for the  Flaschka--Newell equation \cite{FN} which is equivalent to the Painlev\'e equation PII.  A family of connection with this data is 
\begin{equation}\label{eq:p2fn-e7-linear}
\nabla_{\frac{d}{dz}}= \frac{d}{dz}+\frac{1}{z}A_0+A_1+zA_{2} 
= \frac{d}{dz} +\frac{1}{z}A,\  A=\left(
\begin{array}{cc}
 p & \frac{p^2+q z^2-\theta _0^2+q^2 z-2 q t z}{q}
   \\
 z-q & -p
\end{array}
\right). 
\end{equation}
\[
A_0  =  \left(\begin{array}{cc} p & \frac{p^2-\frac{\theta _0^2}{4}}{q} \\-q & -p
\end{array} \right), \ \
A_1  =  \left(\begin{array}{cc} 0 & q+t \\ 1 & 0\end{array}\right), \ \
 A_2 =  \left(\begin{array}{cc} 0 & 1 \\ 0 & 0\end{array} \right).\] 
\begin{table}[h]
\begin{center}
\begin{tabular}{|c|c|c|} \hline
The singular points  $z$ & $0$ &  $\infty$ \\ \hline 
Katz invariant & $0$ &   $3/2$ \\ \hline 
   & & \\
generalized local exponents & $\pm  \frac{\theta_0}{2} $ &  
$ \pm ( z^{3/2} + \frac{t}{2} z^{1/2} )$ \\
  & &\\   \hline 
\end{tabular}
\label{tab:p2-e7-fn}
\end{center}
\end{table}
\[ B := B_0 + z B_1,\  B_1 =  \left(\begin{array}{cc} 0 & 1 \\ 0 & 0 \end{array} \right) \ \
B_0 = \left(\begin{array}{cc} 0 & 2q+t \\ 1 & 0\end{array} \right).\]
\normalsize
\noindent
{\bf Painlev\'e II, PIIFN ($\tilde{E}_{7}$)}
\begin{equation} \label{eq:p2-e7-fn-sys}
\left\{
\begin{array}{lcl}
\displaystyle{\frac{dq}{dt}} & = & \displaystyle{2p} \\
\displaystyle{\frac{dp}{dt}} & = & 
\displaystyle{ \frac{\left(2 q^3+  t q^2+p^2-\frac{\theta_0^2}{4}\right)}{q}}
= 2q^2 + t q + \frac{p^2-\frac{\theta_0^2}{4}}{q}
\end{array}
\right.
\end{equation}

The system (\ref{eq:p2-e7-fn-sys}) is equivalent to the following 
second order equation. 
\begin{equation}\label{eq:p2-e7-fn-sing}
q''= \frac{\left(q'\right)^2}{2q}+ 4 q^2+2 t q-\frac{
   \theta _0^2}{2q}
\end{equation} 
\[ 
\Omega =\frac{dp \wedge dq}{q} - dH_{IIFNE_7}\wedge dt \quad \mbox{ with }
\] 
\begin{equation}\label{eq:p2-e7-fn-ham}
H_{IIFNTE_7} = -\frac{p^2-\frac{\theta_0^2}{4}}{q}+ q^2 +t q.
\end{equation}
Equation (\ref{eq:p2-e7-fn-sys}) is equivalent to the following Hamiltonian system:
\begin{equation}
\left\{
\begin{array}{ccc}
\displaystyle{\frac{dq}{dt}} &=& 
\displaystyle{-q\frac{\partial H_{IIFNE_7}}{\partial p}}, \\
\displaystyle{\frac{dp}{dt}} & =& 
\displaystyle{ q \frac{\partial H_{IIFNE_7}}{\partial q}}. 
\end{array} \right.
\end{equation}

\subsection{\bf Family $(-, -, 3)$ and  Painlev\'e II,  PII($ \tilde{E}_{7} $).}
\label{ss:p2-e7}
We present the data and the results of the computation.
\begin{equation}\label{eq:p2-e7-linear} 
\nabla_{\frac{d}{dz}}= \frac{d}{dz}+A_0+zA_1+z^2A_{2} 
= \frac{d}{dz} +A \mbox{ where }
\end{equation} 
\[
A_0  =  \left(
\begin{array}{cc}
 p-q^2 & 2 q^3-2 p q+t q+\theta _{\infty } \\
 -q & q^2-p
\end{array}
\right), 
\ 
A_1  =  \left(
\begin{array}{cc}
 0 & 2 q^2-2 p+t \\
 1 & 0
\end{array}
\right),  A_2= \]
\[  \left(
\begin{array}{cc}
 1 & 0 \\
 0 & -1
\end{array}
\right), 
A=\left(
\begin{array}{ll}
p+z^2 -q^2& (q+1) t-2(p-q^2) (z+q)
   z+\theta _{\infty } \\
 z-q &-p -z^2+q^2
\end{array}
\right). \]
\begin{table}[h]
\begin{center}
\begin{tabular}{|c|c|} \hline
The singular points  $z$ &  $\infty  $ \\ \hline 
Katz invariant &    $3$ \\ \hline 
   & \\
generalized local exponents &  
$\pm ( z^{3} + \frac{t}{2} z+ \frac{\theta_{\infty}}{2} )$ \\  &\\   \hline \end{tabular}
\label{tab:p2-e7}
\end{center}
\end{table}
\[ B := B_0 + zB_1,\  B_0  = \left(
\begin{array}{ll}
 \frac{q}{2} &  q^2-
   p+\frac{t}{2} \\
 \frac{1}{2} & -\frac{q}{2}
\end{array}
\right), \
B_1  =  \left( \begin{array}{cc}
 1/2 & 0 \\
 0 & -1/2
\end{array}
\right).  \]
\noindent
{\bf Painlev\'e II, PII($\tilde{E}_7$)}
\begin{equation} \label{eq:p2-e7-sys}
\left\{
\begin{array}{lcl}
\displaystyle{\frac{dq}{dt}} & = & p \\
\displaystyle{\frac{dp}{dt}} & = & 2q^3+ t q + \frac{\theta_{\infty} + 1}{2} 
\end{array}
                        \right. 
\end{equation}. 
\begin{equation}\label{eq:p2-e7-sing}
q''=  2 q^3 +  qt + \frac{ \theta_{\infty} + 1}{2}
\end{equation} 
\[
\Omega =dp \wedge dq- dH_{IIE_7}\wedge dt, \mbox{ where }
\]
\begin{eqnarray}\label{eq:p2-e7} 
H_{IIE_7}(p, q,t,  \theta) & = &\frac{1}{2}(-p^2+q^4+ t q^2+ (\theta_{\infty }+1) q)
\end{eqnarray}
Equation (\ref{eq:p2-e7-sys}) is equivalent to the 
following Hamiltonian system:
\begin{equation}
\left\{
\begin{array}{ccc}
\displaystyle{\frac{dq}{dt}} &=& 
\displaystyle{-\frac{\partial H_{IIE_7}}{\partial p}}, \\
\displaystyle{\frac{dp}{dt}} & =& 
\displaystyle{  \frac{\partial H_{IIE_7}}{\partial q}}. 
\end{array} \right.
\end{equation}

\subsection{\bf Family $(-, -, 5/2)$ and  Painlev\'e I,  PI($\tilde{E}_{8}$).}
\label{ss:p1-e8}

The family of connection with the data can be written as  
\begin{table}[h]
\begin{center}
\begin{tabular}{|c|c|} \hline
The singular points  $z$ &  $\infty $ \\ \hline 
Katz invariant &    $\frac{5}{2}$ \\ \hline 
   & \\
generalized local exponents &  
$ \pm ( z^{5/2} + \frac{t}{2} z^{1/2} )$ \\
  &\\   \hline 
\end{tabular}
\label{tab:p1-e8}
\end{center}
\end{table}

\begin{equation}\label{eq:p1-e8-linear}
\nabla_{\frac{d}{dz}}= \frac{d}{dz}+A_0+zA_1+z^2A_{2} 
= \frac{d}{dz} +A, \mbox{ where }
\end{equation} 
\begin{equation}
A_0 =  \left(
\begin{array}{cc}
 p & q^2+ t \\
 -q & -p
\end{array}
\right), \quad
A_1  = \left(
\begin{array}{cc}
 0 & q \\
 1 & 0
\end{array}
\right)
 \quad 
 A_2 =  \left(
\begin{array}{cc}
 0 & 1 \\
 0 & 0
\end{array}
\right), 
\end{equation}
\[
A=\left(
\begin{array}{cc}
 p & q^2+z q+z^2+t \\
 z-q & -p
\end{array}
\right). 
\]

\par
\noindent
\normalsize
\[ B := B_0 + zB_1, \quad   B_0  = \left(\begin{array}{cc}
 0 & 2q \\
 1 & 0
\end{array}
\right),  B_1 = \left(
\begin{array}{cc}
 0 & 1 \\
 0 & 0
\end{array}
\right). \]
\noindent
{\bf Painlev\'e I, PI($\tilde{E}_8$)}
\begin{equation} \label{eq:p1-e8-sys}
\left\{
\begin{array}{lcl}
\displaystyle{\frac{dq}{dt}} & = & \displaystyle{2p} \\
  &  &  \\
\displaystyle{\frac{dp}{dt}} & = & 3q^2+t
\end{array}
\right.
\end{equation}
The system (\ref{eq:p1-e8-sys}) is equivalent to the following 
second order equation. 
\begin{equation}\label{eq:p1-e8-sing}
q''=  6 q^2 + 2t 
\end{equation} 
\begin{eqnarray}\label{eq:p2-e7} \Omega =dp \wedge dq- dH_{IE_8}\wedge dt, \
H_{IE_8}(p, q,t,  \theta) & = &-p^2+q^3+ t q
\end{eqnarray}
Equation (\ref{eq:p1-e8-sys}) is equivalent to the 
following Hamiltonian system:
\begin{equation}
\left\{
\begin{array}{ccc}
\displaystyle{\frac{dq}{dt}} &=& 
\displaystyle{-\frac{\partial H_{IE_8}}{\partial p}}, \\
\displaystyle{\frac{dp}{dt}} & =& 
\displaystyle{  \frac{\partial H_{IE_8}}{\partial q}}. 
\end{array} \right.
\end{equation}

\vspace{0.5cm}

\begin{center}
{\bf Acknowledgments}
\end{center}

The second author would like to thank  Marius van der Put and  Jaap Top for their hospitality during his 
visits of the department of Mathematics of the university of Groningen.

\end{document}